\documentclass[12pt,twoside ]{article}
\usepackage{graphicx}
\usepackage[french,english]{babel}
\usepackage[utf8]{inputenc} 
\usepackage[T1]{fontenc} 
\usepackage{array} 
\usepackage{tabularx,booktabs,multirow} 
\usepackage{amsmath, amsfonts, amssymb, amsthm, mathtools, thmtools, mathrsfs} 
\usepackage{ dsfont } 
\usepackage{lmodern} 
\usepackage{ytableau, youngtab, genyoungtabtikz} 
\ytableausetup{aligntableaux=center}
\usepackage{cancel} 
\usepackage{comment}
\usepackage{url}
\usepackage{caption}
\usepackage{subcaption}
\usepackage[normalem]{ulem}
\usepackage{makecell}
\usepackage{rotating}
\usepackage{longtable}
\usepackage{fullpage}
\usepackage{pdflscape}
\usepackage{xcolor}
\usepackage{enumitem}
\usepackage{scalerel}

\usepackage{imakeidx}
\makeindex[title=Index des définitions, program=makeindex, columns=2]
\let\emphorig\emph
\renewcommand{\emph}[1]{\emphorig{#1}\index{#1}}

\newtheoremstyle{mes_theoremes}{1.5em}{2em}{}{}{\bfseries}{~:~}{\parskip}{\thmname{#1}\thmnumber{ #2}\thmnote{ (#3)}}
\theoremstyle{mes_theoremes} 
\newtheorem{theo}{Theorem}[section]
\newtheorem{theo*}{Theorem}
\newtheorem{prop}[theo]{Proposition}
\newtheorem{conj}[theo]{Conjecture}
\newtheorem{cor}[theo]{Corollary}

\newtheorem{algo}[theo]{Algorithm}

\newtheorem{rem}[theo]{Remark}

\newtheorem*{ex*}{Example}
\newtheorem{ex}[theo]{Example}

\usepackage[colorinlistoftodos]{todonotes}
\setlength{\marginparwidth}{2.75cm}
\newcommand{\flo}[1]{\todo[size=\tiny,color=green!30,inline]{#1}}


\newcommand{\N}{\mathbb{N}}

\newcommand{\C}{\mathbb{C}}

\newcommand{\isom}{\simeq}

\renewcommand{\S}{\mathds{S}}
\renewcommand{\O}{\mathcal{O}}

\Yvcentermath1 
\Yboxdim{0.4cm} 
\newcommand\rd{\Yfillcolour{red!30}}
\newcommand\bl{\Yfillcolour{blue!30}}

\newcommand\wt{\Yfillcolour{white}}
\newcommand\red{\Ynodecolour{red}}

\newcommand\black{\Ynodecolour{black}}

\usepackage{tikz, tikz-3dplot, pgfplots, tikzit}
\usepackage{tkz-graph}
\usepackage{ulem}
\usetikzlibrary[positioning,patterns,shapes, calc, arrows,decorations.markings]

\pgfdeclareshape{3rRevL}
{
	\savedanchor\centerpoint{%
		\pgf@x=.5\wd\pgfnodeparttextbox%
		\pgf@y=.5\ht\pgfnodeparttextbox%
		\advance\pgf@y by -.5\dp\pgfnodeparttextbox%
	}
	\anchor{center}{\centerpoint}
	\anchorborder{\centerpoint}
	\anchor{text}{%
		\centerpoint%
		\pgf@x=-\pgf@x%
		\pgf@y=-\pgf@y%
	}
}

\pgfplotsset{compat=1.8}
\tikzstyle{bsq}=[rectangle, draw, thick, minimum width=1cm, minimum height=1cm] 
\tikzstyle{3rver}=[rectangle, draw, thick, minimum width=1cm, minimum height=3cm]
\tikzstyle{3rhor}=[rectangle, draw, thick, minimum width=3cm, minimum height=1cm]
\def\3rRevL at (#1,#2){\draw (#1,#2) -- ++(0,2) -- ++(2,0) -- ++(0,-1) -- ++(-1,0) -- ++(0,-1) -- cycle;}
\def\3rJ at (#1,#2){\draw (#1,#2) -- ++(0,1) -- ++(1,0) -- ++(0,1) -- ++(1,0) -- ++(0,-2) -- cycle;}

\begin{document}
	
\title{
Quasicrystal Structure of Fundamental Quasisymmetric Functions, and Skeleton of Crystals}

	
\author{Florence Maas-Gariépy}
	
\thispagestyle{empty}        
\maketitle
	

\begin{abstract}
We use crystals of tableaux and descent compositions to understand the decomposition of Schur functions $s_\lambda$ into Gessel's fundamental quasisymmetric functions $F_\alpha$. The connected crystal of tableaux $B(\lambda)$, associated to $s_\lambda$, is shown to be partitionned into a disjoint union of connected induced subgraphs $B(T_\alpha)$ corresponding to the $F_\alpha$'s.

We show that these subgraphs, which we call quasicrystals, are isomorphic (as graphs) to specific crystals of tableaux. This allows us to give a formula for the number of tableaux of shape $\lambda$ and maximal entry $n$. We also use this setting to give a constructive proof of a combinatorial formula for Kostka numbers $K^\lambda_\mu$. We study the position of the quasicrystals within the crystal $B(\lambda)$, and show that they appear in dually positionned pairs, with the crystal anti-automorphism between them being given by a generalization of Schützenberger's evacuation. We introduce the notion of skeleton of the crystal $B(\lambda)$ given by replacing each subgraph $B(T_\alpha)$ by the associated standard tableau of shape $\lambda$. We conjecture that its graph includes the dual equivalence graph for $\lambda$, introduced by Assaf, and that its subgraphs of tableaux with fixed number of descents have particular structures.
Finally, we describe applications to plethysm, among which we give an algorithm to express any symmetric sum of fundamental quasisymmetric functions into the Schur basis, whose construction gives insight into the relationship between the two basis.
\end{abstract}

\section*{Introduction}

Quasisymmetric functions were introduced by Gessel \cite{Gessel}, 
in the context of the study of symmetric functions, which they generalise. 
Notably, the plethysm $s_\mu[s_\lambda]$ of two Schur functions has been shown to be a sum of fundamental quasisymmetric functions \cite{LoehrWarrington}. Understanding the decomposition of such a plethysm into the Schur basis has been an open problem for more than 80 years, 
since its initial introduction by Littlewood \cite{Littlewood}. 
Therefore, studying both the basis of fundamentatal quasisymmetric functions and Schur functions, 
and relations
between them, has the potential of greatly advancing our understanding of plethysm.\\

We propose here a study of Schur functions and fundamental quasisymmetric functions through crystal theory. We believe this to be of value, since both plethysm and crystal theory originate from representation theory.\\ 

A \emph{crystal} 
is a visual representation of the character of a representation of a group 
in the shape of a labelled oriented graph on combinatorial objects.
Irreducible characters then correspond to connected components of crystals, which then are crystals in their own right.
\\
%
%
A good introductory reference on crystals is 
\textit{Crystals for dummies} \cite{CrystalsForDummies}. For a thorough understanding, see \textit{Crystal Bases} \cite{BumpSchilling}.\\

We will focus on crystals of type $A_{n-1}$, which correspond to characters of representations of $GL_n$, which themselves 
are given by symmetric functions: formal power series such that permuting any two variables gives the same function. The Schur functions mentionned above encode the irreducible characters for $GL_n$. 
The problem of decomposing a symmetric function into the basis of Schur functions then corresponds to breaking down a character (and the associated representation) into its smallest pieces: irreducible characters (or representations). In the setting of crystals, we are interested in understanding the decomposition of crystals into connected components $B(\lambda)_n$, which then correspond to the Schur functions $s_\lambda(x_1,\ldots,x_n)$.\\

Furthermore, the following formula of Gessel \cite{Gessel2019} tells us that these connected components can be decomposed further into subcomponents which then correspond to fundamental quasisymmetric functions.
\[
s_\lambda = \sum_{T\in \text{SYT}(\lambda)} F_{DesComp(T)}
\]

Understanding this decomposition in the crystal setting is the first aim of the article. The second aim is to understand the added structure on quasisymmetric functions within the crystal structure, and relations between them. We show the following.


\begin{theo*}
	The connected crystal $B(\lambda)_n$ of tableaux of shape $\lambda$ and maximal entry $n$ is partitionned into disjoint connected induced subgraphs $B(T_\alpha)_n$ which correspond to quasisymmetric functions $F_\alpha(x_1,\ldots,x_n)$, where the subsets of vertices are tableaux with a fixed descent composition $\alpha$. The sources $T_\alpha$ of these subcomponents have filling and \textit{minimal parsing of type $\alpha$}. 
	The number $f^\lambda_\alpha$ of subgraphs of type $\alpha$ is the number of standard tableaux of shape $\lambda$ and descent composition $\alpha$.
\end{theo*}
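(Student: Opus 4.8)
The plan is to realize the partition as the collection of fibers of the standardization map and to read off the four assertions from this description. Consider $\mathrm{std}\colon B(\lambda)_n\to\SYT(\lambda)$, sending a semistandard tableau $S$ to the standard tableau obtained by relabelling the entries of $S$ with $1,\dots,|\lambda|$, equal entries being ordered by position in the reading word. Its fibers are disjoint and cover $B(\lambda)_n$, so I take $B(T_\alpha)_n$ to be the induced subgraph of $B(\lambda)_n$ on $\mathrm{std}^{-1}(U)$, one for each $U\in\SYT(\lambda)$. The key point is a characterisation of the fiber: $\mathrm{std}(S)=U$ if and only if, reading the entries of $S$ in the cell order dictated by the labels of $U$, one obtains a weakly increasing sequence $1\le i_1\le\cdots\le i_{|\lambda|}\le n$ whose strict ascents occur exactly at the descents of $U$. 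The only non-routine point here is that strictness at the descents of $U$ already forces column-strictness of $S$; this holds because between any two vertically adjacent cells of a standard tableau the set of intermediate labels must contain a descent (the row indices of the cells labelled consecutively cannot be weakly decreasing from the upper cell's label down to the lower cell's label). Hence $\mathrm{std}^{-1}(U)$ is in weight-preserving bijection with the monomials of $F_{\mathrm{DesComp}(U)}(x_1,\dots,x_n)$, so $\mathrm{ch}\,B(T_\alpha)_n=F_\alpha(x_1,\dots,x_n)$ with $\alpha=\mathrm{DesComp}(U)$; summing over $U$ recovers Gessel's identity, and counting the fibers with prescribed $\alpha$ gives $f^\lambda_\alpha=\#\{U\in\SYT(\lambda):\mathrm{DesComp}(U)=\alpha\}$.

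Next I would identify the source. For each $U$ with $\alpha=\mathrm{DesComp}(U)=(\alpha_1,\dots,\alpha_k)$ and $k\le n$, let $T_\alpha\in\mathrm{std}^{-1}(U)$ be the member whose associated sequence is the minimal one $1^{\alpha_1}2^{\alpha_2}\cdots k^{\alpha_k}$; equivalently $T_\alpha$ is obtained by writing $t$ in the cells of $U$ labelled $\alpha_1+\cdots+\alpha_{t-1}+1,\dots,\alpha_1+\cdots+\alpha_t$. Using the vertical-descent observation again one checks that $T_\alpha$ is a genuine semistandard tableau, that it has content $\alpha$, and that it is exactly the tableau with filling and minimal parsing of type $\alpha$. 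Since every coordinate of its sequence already attains the minimum forced by the strictness pattern, no operator $e_i$ can send $T_\alpha$ back into $\mathrm{std}^{-1}(U)$ (such an image would correspond to decreasing a coordinate), so $T_\alpha$ is a source of $B(T_\alpha)_n$.

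It remains to prove that each $B(T_\alpha)_n$ is connected with $T_\alpha$ as its unique source. I would first pin down which crystal edges stay inside a fiber: translating the $i/(i+1)$-bracketing rule through the cell order of $U$, the edge $S\xrightarrow{\,i\,}f_i(S)$ lies in $\mathrm{std}^{-1}(U)$ precisely when the active cell is the (reading-order) last cell of value $i$ in $S$, so that incrementing its entry keeps the sequence weakly increasing, and, if the corresponding position $j$ is a descent of $U$, the gap to $i_{j+1}$ is at least $2$; dually for $e_i$. With this criterion I would run a climbing algorithm: given $S\neq T_\alpha$ in the fiber, its sequence is not the minimal one, so some coordinate exceeds its minimum; choosing the rightmost such coordinate, I would exhibit a fiber-preserving $e_i$ that strictly decreases the monovariant $\sum_t i_t$, and iterate until $T_\alpha$ is reached. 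This simultaneously shows that every vertex of $B(T_\alpha)_n$ connects to $T_\alpha$ inside the subgraph, hence it is connected, and that no vertex other than $T_\alpha$ can be a source.

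Step one and the tableau-validity checks are essentially a careful repackaging of known facts; the crux, and the step I expect to carry the real weight, is the compatibility in the last paragraph — showing that the purely sequence-level lowering move can always be realised by an honest Kashiwara operator $e_i$ that does not leave the fiber, i.e.\ that the cell selected by the bracketing rule really is the first cell of value $i+1$ at the moment the algorithm needs it. This demands a precise analysis of how the $i/(i+1)$-bracketing interacts with the standardization order. A route that may streamline it is the substitution $j_t=i_t-\#\{\text{descents of }U\text{ strictly below }t\}$, which identifies $\mathrm{std}^{-1}(U)$ bijectively with the one-row crystal $B\bigl((|\lambda|)\bigr)_{\,n-\ell(\alpha)+1}$ — a connected crystal with a unique source — provided one verifies that this bijection intertwines the fiber-restricted crystal structure, up to relabelling of edge colors, with the one-row crystal structure.
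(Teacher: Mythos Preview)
Your framing via standardization fibers is equivalent to the paper's ``minimal parsing'' language, and your identification of $F_\alpha$, of the source $T_\alpha$, and of the count $f^\lambda_\alpha$ matches the paper's. The substantive difference lies in the connectedness argument, and there your climbing algorithm has a gap as stated. Take $\lambda=(1,1)$, $n=3$, so $\alpha=(1,1)$ and $T_\alpha$ is the column with entries $1,2$; let $S$ be the column with entries $2,3$, sequence $(i_1,i_2)=(2,3)$. Both coordinates exceed their minima; the rightmost is $j=2$ with $i_2=3$, so your rule selects $e_2$. But $e_2(S)$ is null: the reading word is $32$, giving bracket sequence $()$, fully paired. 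The fiber-preserving move here is $e_1$, which corresponds to the \emph{leftmost} non-minimal coordinate. So the bracketing does not pick out the cell your rightmost heuristic predicts, and the step you correctly flagged as the crux is exactly where the argument breaks.

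The paper handles connectedness in the opposite direction: from $T_\alpha$ it writes down an explicit composite of lowering operators $f_i$ reaching any prescribed $S$ in the fiber, processing the maximal horizontal bands from the last to the first and, within each band, from head to tail. With this ordering, at every step the letters $i,i{+}1$ being acted on do not interfere with already-processed bands, so the bracketing rule visibly hits the intended cell and each intermediate tableau retains the same minimal parsing. Your alternative route via the shift $j_t=i_t-\#\{\text{descents of }U\text{ below }t\}$ is correct and is precisely the content of the paper's next theorem (the isomorphism $B(\alpha)_n\simeq B((|\lambda|))_{n-\ell(\alpha)+1}$); it does yield connectedness once one verifies that fiber-restricted edges correspond to one-row crystal edges under the bijection, which is comparable work to the explicit-path argument but packaged differently.
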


As a consequence of the following theorem, we have that for a fixed composition $\alpha$, all subcomponents $B(T_\alpha)_n$ are isomorphic as labelled oriented graphs (see corollary~\ref	{prop:GraphStructureSubcomponents}). Moreover this is true no matter the crystal $B(\lambda)_n$ they live in, so no matter the shapes of the sources $T_\alpha$. We denote the class of such subcomponents $B(\alpha)_n$, and call them \textit{quasicrystals}. We can then study their graph structure: 

\begin{theo*}
	Let $\alpha$ be a composition of $m$ in $s$ parts. The quasicrystal $B(\alpha)_n$ is isomorphic (as an oriented graph) to $B(m)$ with maximal entry $n-s+1$. In particular, the oriented graph structure of $B(\alpha)_n$ is independant of partitions $\lambda$ for which $\alpha$ is a descent composition.
	\label{theo:IsomWithBm}
\end{theo*}

An application of both theorems above, and of proposition~\ref{prop:NumberOfTableauxInBm}, is the following formula, giving the number of tableaux with fixed shape $\lambda$ and maximal entry $n$.

\begin{theo*}
	The number of tableaux of shape $\lambda$ with maximal entry $n$ is given by 
	\[|SSYT(\lambda)_n|=
	\sum_{0\leq d\leq D}
	f^\lambda_d\cdot
	\left(\begin{array}{c} |\lambda|+n-d-1 \\ n-d-1 \end{array}\right),
	\]
	where $f^\lambda_d$ denotes the number of standard tableaux of shape $\lambda$ with $d$ descents, and $D$ is the maximal number of descents in a standard tableau of shape $\lambda$.
\end{theo*}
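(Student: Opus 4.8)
The plan is to combine the three structural results cited just before the statement: the first theorem (the partition of $B(\lambda)_n$ into quasicrystals $B(T_\alpha)_n$, one for each standard tableau of shape $\lambda$, indexed by its descent composition $\alpha$), Theorem~\ref{theo:IsomWithBm} (each quasicrystal $B(\alpha)_n$ with $\alpha$ a composition of $m=|\lambda|$ into $s$ parts is isomorphic as an oriented graph to $B(m)$ with maximal entry $n-s+1$), and proposition~\ref{prop:NumberOfTableauxInBm}, which presumably gives $|SSYT(m)_k| = \binom{m+k-1}{k-1}$, the number of semistandard tableaux of a single row of length $m$ with entries in $\{1,\dots,k\}$ (equivalently, the number of monomials of degree $m$ in $k$ variables). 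Counting vertices on both sides of the partition then yields the formula after a reindexing by the number of descents rather than by the composition itself.

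First I would write $|SSYT(\lambda)_n|$ as the number of vertices of the crystal $B(\lambda)_n$. By the first theorem this vertex set is the disjoint union, over $T \in \SYT(\lambda)$, of the vertex sets of the quasicrystals $B(T_{\alpha})_n$ where $\alpha = DesComp(T)$. Hence $|SSYT(\lambda)_n| = \sum_{T\in \SYT(\lambda)} |B(T_{DesComp(T)})_n|$. Next, for a standard tableau $T$ with $d$ descents, its descent composition $\alpha = DesComp(T)$ is a composition of $m = |\lambda|$ into exactly $s = d+1$ parts (the parts of the descent composition count the maximal runs between consecutive descents, so the number of parts is one more than the number of descents). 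Applying Theorem~\ref{theo:IsomWithBm}, $B(T_\alpha)_n \cong B(m)$ with maximal entry $n - s + 1 = n - d$, so its vertex count depends only on $d$, not on $T$ or even on $\lambda$. By proposition~\ref{prop:NumberOfTableauxInBm} this count equals $|SSYT(m)_{n-d}| = \binom{m + (n-d) - 1}{(n-d)-1} = \binom{|\lambda| + n - d - 1}{n - d - 1}$.

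Finally I would reindex the sum. Grouping the standard tableaux of shape $\lambda$ by their number of descents $d$, there are by definition $f^\lambda_d$ of them with exactly $d$ descents, and each contributes the same binomial coefficient $\binom{|\lambda|+n-d-1}{n-d-1}$; the number of descents ranges from $0$ (always achieved by the row-reading or superstandard tableau, as long as $\lambda$ has a valid standard filling) up to the maximum $D$. This gives exactly
\[
|SSYT(\lambda)_n| = \sum_{0\le d\le D} f^\lambda_d \cdot \binom{|\lambda|+n-d-1}{n-d-1},
\]
as claimed. One should also note the edge case: if $n \le d$ for some $d$ in the range, then $n-d-1 < 0$ and the binomial coefficient is taken to be $0$, consistent with the fact that a quasicrystal $B(\alpha)_n$ with more than $n$ parts in $\alpha$... wait, with $s = d+1 > n$ parts is empty; but in fact such $\alpha$ cannot be the descent composition of a tableau of shape $\lambda$ that admits entries in $\{1,\dots,n\}$ only when $d+1 > n$, so those terms genuinely vanish and the convention $\binom{k}{-1}=0$ handles it cleanly.

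The argument is essentially a bookkeeping consequence of the cited results, so there is no deep obstacle; the one point requiring care is the bijective identification "number of parts of $DesComp(T)$ equals (number of descents of $T$) $+\,1$", together with checking that the vertex-count of $B(m)$ with bounded entries is the stated binomial coefficient — both of which are standard but must be stated precisely so that the substitution $s \mapsto d+1$, $n-s+1 \mapsto n-d$ is transparent. I would also make explicit that the disjointness and exhaustiveness of the quasicrystal partition (from the first theorem) is what licenses summing the vertex counts, and that Theorem~\ref{theo:IsomWithBm} is needed in its full strength — the independence from $\lambda$ — so that the summand depends on $T$ only through $d$.
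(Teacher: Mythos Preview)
Your argument is correct and follows the same route as the paper: partition $B(\lambda)_n$ into quasicrystals indexed by standard tableaux, use the graph isomorphism with $B(m)_{n-s+1}$ and the row-shape count $\binom{m+k-1}{k-1}$, then reindex by $d=s-1$ and invoke the convention that the binomial vanishes when $n\le d$. One small slip in a parenthetical: the superstandard tableau has $\ell(\lambda)-1$ descents, not $0$, so $d=0$ is achieved only when $\lambda$ is a single row; this is harmless since $f^\lambda_0=0$ otherwise and the corresponding summand drops out.
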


The existence of "nice" formulas for counting tableaux has been an open question for many years. For the similar problem of giving a "nice" formula for Kostka numbers $K^\lambda_\mu$, open for many years, it has been conjectured that no "nice" formula exists, given the chaotic behavior of those numbers \cite{Stanley2} (see the discussion on mathoverflow on this question \cite{mathoverflow}, viewed a thousand times and with contributions from experts). 

There is however a known combinatorial formula below, using descents, which had only a bijective proof. 
The setting of quasicrystals allowed us to give it a constructive proof. 

\begin{prop}[Proposition~\ref{prop:SaganKostka} {\cite[Proposition 5.3.6]{Sagan}} ]
	The Kostka number $K^\lambda_\mu$ counting tableaux of shape $\lambda$ and composition weight $\mu$ is given by the following formula, where $\alpha\preccurlyeq \mu$ if $\mu$ is a refinement of $\alpha$.
	\[
	K^\lambda_\mu = \left|\{T\in SYT(\lambda) \ | \ DesComp(T)\preccurlyeq \mu \} \right|
	\]
\label{prop:SaganKostka}
\end{prop}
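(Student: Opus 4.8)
The plan is to prove the identity by counting the semistandard tableaux of shape $\lambda$ and content $\mu$ that sit inside the crystal $B(\lambda)_n$, grouped according to the quasicrystal they belong to. Fix once and for all $n\geq\ell(\mu)$; this is harmless, since both sides of the claimed equality are independent of $n$ as soon as $n\geq\ell(\mu)$. By definition $K^\lambda_\mu$ is the number of vertices of weight $\mu$ in $B(\lambda)_n$.

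First I would invoke the partition theorem above: $B(\lambda)_n$ is the disjoint union of its quasicrystals, exactly $f^\lambda_\alpha$ of which have descent composition $\alpha$, and each such quasicrystal $B(T_\alpha)_n$ carries character $F_\alpha(x_1,\dots,x_n)=\sum_{S\in B(T_\alpha)_n}x^{\mathrm{wt}(S)}$; moreover the $f^\lambda_\alpha$ quasicrystals of type $\alpha$ are in canonical bijection with the $T\in\mathrm{SYT}(\lambda)$ having $DesComp(T)=\alpha$ (e.g.\ by standardising the source $T_\alpha$, which has filling and minimal parsing of type $\alpha$). Summing the weight-$\mu$ vertices quasicrystal by quasicrystal,
\[
K^\lambda_\mu=\sum_{T\in\mathrm{SYT}(\lambda)}\bigl[x^\mu\bigr]F_{DesComp(T)}(x_1,\dots,x_n),
\]
so the proposition reduces to the claim that $\bigl[x^\mu\bigr]F_\alpha(x_1,\dots,x_n)$ equals $1$ when $\alpha\preccurlyeq\mu$ and $0$ otherwise --- equivalently, that a quasicrystal of type $\alpha$ has a (then unique) weight-$\mu$ vertex iff $\alpha\preccurlyeq\mu$.

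I would establish this constructively. If $\mu$ is not a refinement of $\alpha$, then no semistandard tableau of content $\mu$ can have descent composition $\alpha$ at all: indeed $DesComp(S)\preccurlyeq\mu$ for every semistandard $S$ of content $\mu$, because two cells carrying equal entries, one strictly below the other, necessarily have the lower one strictly to the left, so under standardisation no pair of consecutive equal entries of $S$ produces a descent; hence the quasicrystal $B(T_\alpha)_n$, whose vertices all have descent composition $\alpha$, has no weight-$\mu$ vertex. If instead $\alpha=(\alpha_1,\dots,\alpha_s)\preccurlyeq\mu$, write this refinement as a factorisation $\mu=\mu^{(1)}\cdots\mu^{(s)}$ of $\mu$ into consecutive blocks with $|\mu^{(j)}|=\alpha_j$. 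By Theorem~\ref{theo:IsomWithBm} the quasicrystal $B(T_\alpha)_n$ is isomorphic, as an oriented graph, to the single-row crystal $B(m)$ with maximal entry $n-s+1$, where $m=|\alpha|$; following the explicit isomorphism constructed there one checks that it carries the weight $\mu$ to the composition $\nu$ obtained by amalgamating, at each of the $s-1$ block boundaries of $\mu=\mu^{(1)}\cdots\mu^{(s)}$, the two adjacent parts of $\mu$ into one, so $\ell(\nu)=\ell(\mu)-s+1\leq n-s+1$ and $\nu$ occurs exactly once in $B(m)$ with maximal entry $n-s+1$. Its unique preimage --- which one can also read off directly from the minimal parsing of $T_\alpha$, by placing the letters prescribed by $\mu^{(1)},\dots,\mu^{(s)}$ into the successive increasing runs --- is the desired weight-$\mu$ vertex. (For the purely numerical version it is enough to note that $1^{\mu_1}2^{\mu_2}\cdots$ is the only weakly increasing word of content $\mu$ and that its strict ascents occur exactly at the partial sums $S(\mu)$ of $\mu$, so $[x^\mu]F_\alpha=1$ iff $S(\alpha)\subseteq S(\mu)$, i.e.\ $\alpha\preccurlyeq\mu$.)

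Feeding this back into the displayed formula gives
\[
K^\lambda_\mu=\bigl|\{T\in\mathrm{SYT}(\lambda)\ :\ DesComp(T)\preccurlyeq\mu\}\bigr|,
\]
and, unwinding the argument, the bijection that realises it sends such a $T$ to the unique weight-$\mu$ vertex of the quasicrystal of $B(\lambda)_n$ indexed by $T$. I expect the main obstacle to be exactly the weight bookkeeping of the third paragraph: Theorem~\ref{theo:IsomWithBm} is phrased only as an isomorphism of oriented graphs, so one must re-enter its proof to see how weights transform under the explicit map (equivalently, to nail down the explicit weight-$\mu$ tableau in each quasicrystal), and one must also confirm that truncating to maximal entry $n$ produces no boundary effect that would suppress the sought vertex --- it does not, since that vertex uses only the letters $1,\dots,\ell(\mu)\leq n$. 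Everything else is routine once the partition theorem and Theorem~\ref{theo:IsomWithBm} are available.
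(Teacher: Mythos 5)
Your proposal is correct and follows essentially the same route as the paper: partition $B(\lambda)_n$ into quasicrystals indexed by standard tableaux, then show each quasicrystal of type $\alpha$ contains exactly one weight-$\mu$ vertex precisely when $\alpha\preccurlyeq\mu$ (the paper states this as a separate proposition and proves it by noting that $\alpha\preccurlyeq\mu$ uniquely determines a filling of the maximal horizontal bands of $T_\alpha$ --- the same direct argument as your parenthetical alternative, so the detour through Theorem~\ref{theo:IsomWithBm} and its weight bookkeeping is not actually needed). The only cosmetic difference is your algebraic phrasing via $[x^\mu]F_\alpha$, which amounts to the same count.
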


After studying the structure of the quasicrystals $B(\alpha)$, we go back to studying the structure of the crystal $B(\lambda)$.

\begin{theo*}
	Subcomponents $B(T_\alpha)$ and $B(T_{\overset{\leftarrow}{\alpha}})$ necessarily appear in pairs in a given crystal $B(\lambda)$. They are then dual one to another (as graphs) and are positionned in dual locations within $B(\lambda)$, with the anti-automorphism of crystal between both being the evacuation map.
\end{theo*}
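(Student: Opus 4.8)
The plan is to exhibit both the pairing and the anti-automorphism at once, by using Schützenberger's evacuation $\mathrm{evac}$ acting on all of $B(\lambda)_n$ (the version defined on semistandard tableaux with entries in $\{1,\dots,n\}$; equivalently, the Lusztig/Schützenberger involution of the crystal $B(\lambda)_n$). First I would record the three properties of $\mathrm{evac}$ that do the work. \textbf{(i)} $\mathrm{evac}$ is an involutive \textit{anti-automorphism} of the crystal $B(\lambda)_n$: a vertex bijection reversing every arrow (with the Dynkin relabeling $i\mapsto n-i$ of arrow colours), hence carrying any connected induced subgraph onto a connected induced subgraph with all arrows reversed, and exchanging sources with sinks. \textbf{(ii)} $\mathrm{evac}$ commutes with standardization: $\mathrm{std}\circ\mathrm{evac}=\mathrm{evac}\circ\mathrm{std}$. \textbf{(iii)} on $\SYT(\lambda)$ it reverses descents, $\mathrm{Des}(\mathrm{evac}(T))=\{\,|\lambda|-i : i\in\mathrm{Des}(T)\,\}$, i.e.\ $\mathrm{DesComp}(\mathrm{evac}(T))=\overset{\leftarrow}{\mathrm{DesComp}(T)}$. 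Items (i) and (iii) are classical for standard tableaux; (ii) follows from writing $\mathrm{evac}$ as complementation of entries, $180^\circ$ rotation, and jeu-de-taquin rectification, and checking each stage against $\mathrm{std}$.

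Granting these, the argument is short. By the partition theorem stated above, the vertex sets of the subcomponents of $B(\lambda)_n$ are exactly the nonempty fibres of the map $T\mapsto\mathrm{DesComp}(\mathrm{std}(T))$ on $B(\lambda)_n$, and the subcomponents of type $\alpha$ correspond bijectively to $\{T\in\SYT(\lambda):\mathrm{DesComp}(T)=\alpha\}$. By (ii) and (iii), $\mathrm{evac}$ maps the fibre over $\alpha$ bijectively onto the fibre over $\overset{\leftarrow}{\alpha}$; in particular $f^\lambda_\alpha=f^\lambda_{\overset{\leftarrow}{\alpha}}$, so a subcomponent of type $\alpha$ cannot occur without one of type $\overset{\leftarrow}{\alpha}$ --- they appear in pairs. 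Restricting the global bijection $\mathrm{evac}$ to a single subcomponent $B(T_\alpha)_n$ then gives, by (i), an arrow-reversing isomorphism onto one subcomponent of type $\overset{\leftarrow}{\alpha}$: this is the asserted duality ``as graphs'', and it is induced by the anti-automorphism $\mathrm{evac}$ of the ambient $B(\lambda)_n$, which is precisely what ``dually positioned'' means. Since (i) swaps sources and sinks, the source $T_{\overset{\leftarrow}{\alpha}}$ of the image is the $\mathrm{evac}$-image of the sink of $B(T_\alpha)_n$, while the sink of the image is $\mathrm{evac}(T_\alpha)$; and $\mathrm{evac}^2=\mathrm{id}$ makes the pairing symmetric, a subcomponent being self-paired only when $\alpha=\overset{\leftarrow}{\alpha}$ and its standard tableau is self-evacuating. (Combined with Theorem~\ref{theo:IsomWithBm}, $B(T_\alpha)_n$ and $B(T_{\overset{\leftarrow}{\alpha}})_n$ are in fact already isomorphic as abstract oriented graphs --- both being $B(m)_{n-s+1}$ --- so the real content of the statement is the placement inside $B(\lambda)_n$ and the identification of the connecting map with evacuation.)

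The main obstacle is item (ii): showing that the generalized evacuation respects the descent-composition partition of $B(\lambda)_n$, since that partition is defined through standardization and through the ``minimal parsing of type $\alpha$''. One must check that complementation $k\mapsto n+1-k$ and $180^\circ$ rotation reverse the descent data of the reading word and intertwine with $\mathrm{std}$, and that jeu-de-taquin rectification commutes with $\mathrm{std}$ --- standard but not entirely free. An alternative route bypasses (ii): characterize the source $T_\alpha$ intrinsically inside the crystal --- as the distinguished tableau of descent composition $\alpha$ singled out by the partition theorem --- use the known intertwining of the Schützenberger and Lusztig involutions to conclude that $\mathrm{evac}$ permutes these sources while reversing $\alpha\mapsto\overset{\leftarrow}{\alpha}$, and then invoke Theorem~\ref{theo:IsomWithBm} to make the arrow-reversal on each subcomponent concrete via the explicit evacuation on $B(m)_{n-s+1}$. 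A final bookkeeping point is to pin down the colour relabeling in (i) and confirm it is compatible with the labelling under which the subcomponents are regarded as oriented graphs.
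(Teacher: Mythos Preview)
Your approach is essentially the paper's: both use that $\operatorname{EVAC}$ is a crystal anti-automorphism of $B(\lambda)_n$ which respects the partition into subcomponents while reversing $\alpha\mapsto\overset{\leftarrow}{\alpha}$. The difference lies only in how that respect for the partition is argued. You package it as the single commutation $\mathrm{std}\circ\operatorname{EVAC}=\operatorname{EVAC}\circ\mathrm{std}$ (your item~(ii)), which is clean because the subcomponents are precisely the fibres of $\mathrm{std}$; the paper instead proves directly in the appendix (via the word involution $Rot(w)=w_0ww_0$ and its interaction with $RSK$) that $\operatorname{EVAC}$ reverses descent compositions of semistandard tableaux, and then in the main proof uses the weight characterisation from Proposition~\ref{thm:SourcesQuasiSym} to check that the corresponding edges $e_{n-i}$ stay inside a single subcomponent.

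One correction and one small gap to flag. First, the subcomponents are \emph{not} the fibres of $T\mapsto\mathrm{DesComp}(\mathrm{std}(T))$ as you wrote---that map only sees the composition $\alpha$, and there can be several subcomponents of the same type $\alpha$ (one per standard tableau with that descent composition). They are the fibres of $T\mapsto\mathrm{std}(T)$. Second, in your sentence ``Restricting $\operatorname{evac}$ to a single subcomponent $B(T_\alpha)_n$ then gives, by~(i), an arrow-reversing isomorphism onto one subcomponent of type $\overset{\leftarrow}{\alpha}$'', item~(i) alone does not suffice: a connected induced subgraph of the $\overset{\leftarrow}{\alpha}$-locus could in principle straddle two subcomponents. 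What forces the image into a single subcomponent is precisely your item~(ii), since it says $\operatorname{evac}$ permutes the $\mathrm{std}$-fibres. With those two fixes your argument goes through and matches the paper's.
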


By replacing each subcomponent $B(T_\alpha)$ in $B(\lambda)_n$ by the associated standard tableau, and preserving only the edges of minimal index between subcomponents, 
we obtain what we call the skeleton of $B(\lambda)_n$, denoted $Skeleton(\lambda)_n$.

\begin{theo*}
	For $\lambda\vdash m$ fixed, let $S$ be the maximal length of a descent composition for tableaux of shape $\lambda$. Then the skeletons $Skeleton(\lambda)_n$ of the crystals $B(\lambda)_n$ are equal for all $n\geq S$.
	For $1\leq n\leq S$, the skeleton of $B(\lambda)_n$ is the induced subgraph of $Skeleton(\lambda)_S$ containing standard tableaux of shape $\lambda$ with descent composition having at most $n$ parts. 
\end{theo*}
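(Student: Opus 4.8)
The plan is to realize every $B(\lambda)_n$ as an induced sub-crystal of the one for large $n$, reduce the assertion to the stabilization of the graph of minimal-index edges \textit{between} subcomponents, and then control that graph through the standardization map; throughout I write $\ell(\alpha)$ for the number of parts of a composition $\alpha$.

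\textbf{Step 1 (the vertices).} By the partition of $B(\lambda)_n$ into subcomponents established above, these subcomponents are indexed by $\SYT(\lambda)$, a standard tableau $T$ giving the subcomponent with descent composition $\alpha=DesComp(T)$. By Theorem~\ref{theo:IsomWithBm}, $B(T_\alpha)_n$ is isomorphic, as an oriented graph, to $B(m)$ with maximal entry $n-\ell(\alpha)+1$, hence is nonempty exactly when $n\ge\ell(\alpha)$. So the vertices of $Skeleton(\lambda)_n$ are in bijection with $\{T\in\SYT(\lambda):\ell(DesComp(T))\le n\}$. Since $S=\max_{T\in\SYT(\lambda)}\ell(DesComp(T))$, for $n\ge S$ this is all of $\SYT(\lambda)$, independently of $n$, while for $n<S$ it is precisely the vertex set of the claimed induced subgraph of $Skeleton(\lambda)_S$.

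\textbf{Step 2 (the tower and the reduction).} For every $n$, $B(\lambda)_n$ is the full induced subgraph of $B(\lambda)_{n+1}$ on the tableaux with entries $\le n$, with $e_i,f_i$ for $i\le n-1$ the restrictions of those of $B(\lambda)_{n+1}$. The elementary but crucial point is that an $i$-edge with $i\le n-1$ only turns an $i$ into an $i+1\le n$, so its two endpoints either both have an entry $\ge n+1$ or neither does; therefore the $i$-edges of $B(\lambda)_{n+1}$ with $i\le n-1$ whose endpoints have entries $\le n$ are exactly the $i$-edges of $B(\lambda)_n$, and since the subcomponent of a tableau is read off from the descent composition of its standardization (unchanged by the inclusion) one gets $B(T_\alpha)_n=B(T_\alpha)_{n+1}\cap B(\lambda)_n$ for all $\alpha$ with $\ell(\alpha)\le n$. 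By Step 1 it then suffices to prove that for $n\ge S$ two subcomponents $B(T_\alpha)_n,B(T_\beta)_n$ are joined in $B(\lambda)_n$ by a crystal edge of minimal index $i_0$ if and only if $B(T_\alpha)_S,B(T_\beta)_S$ are joined in $B(\lambda)_S$ by one of the same minimal index $i_0$ (together with the analogous ``downward'' comparison between $B(\lambda)_n$ and $B(\lambda)_S$ for $n\le S$, restricted to the relevant vertices). One direction is immediate from the above: a minimal-index cross-edge of $B(\lambda)_S$ survives in $B(\lambda)_n$, and a cross-edge of strictly smaller index in $B(\lambda)_n$ would, as soon as its endpoints had entries $\le S$, already be present in $B(\lambda)_S$, contradicting minimality there.

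\textbf{Step 3 (the main obstacle).} What is left, and this is the heart of the proof, is to preclude a genuinely new minimal cross-edge created by the newly available large entries; concretely, to show that the minimal index of a crystal edge between two adjacent subcomponents $B(T_\alpha),B(T_\beta)$ is bounded in terms of $\ell(\alpha),\ell(\beta)$ (hence by $S-1$) and is attained by a cross-edge whose endpoints have entries no larger than $\max(\ell(\alpha),\ell(\beta))$. I would do this by analyzing how a single $f_i$ acts on the descent composition: $e_i$ and $f_i$ leave the standardization $st(T)$ of a tableau $T$ unchanged except possibly for the relative order of the block of entries equal to $i$ and to $i+1$, so a cross-edge $T\to f_iT$ forces this local reordering to displace a descent of the underlying standard tableau $st(T)$. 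Because $st(T)$ has at most $S-1$ descents, and because Theorem~\ref{theo:IsomWithBm} guarantees that both $B(T_\alpha)$ and $B(T_\beta)$ already occur in $B(\lambda)_S$ (since $S\ge\ell(\alpha),\ell(\beta)$), I would normalize the content of $T$, compressing its support to an initial segment, while keeping track of which two subcomponents the edge connects and of its index, and then locate at level $S$ a cross-edge of the same minimal index — using the explicit description of the sources $T_\alpha$ (filling and minimal parsing of type $\alpha$) from the first theorem to pin it down, and using the duality of the Schützenberger-type pairs $B(T_\alpha),B(T_{\overset{\leftarrow}{\alpha}})$ when it is convenient to work near the top of the crystal instead. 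The delicate and most care-demanding point is exactly that this normalization preserves the two subcomponents — which correspond to quasisymmetric, not symmetric, functions and so are \emph{not} stable under the Weyl-group symmetry one would naively invoke — and preserves the \textit{minimal} cross-index, rather than merely producing some cross-edge of index $\le S-1$; I expect to handle it by an induction on $n$ that peels off one value at a time.
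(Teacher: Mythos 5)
Your Steps 1 and 2 set up essentially the same reduction the paper uses: the vertex set is controlled by $\ell(DesComp(T))\le n$, edges present in $B(\lambda)_S$ persist in $B(\lambda)_n$ for $n\ge S$, and everything hinges on ruling out a \emph{new} minimal cross-edge between two subcomponents $B(T_\alpha)$, $B(T_\beta)$ that would only appear once entries larger than $N=\max(\ell(\alpha),\ell(\beta))$ become available. You have correctly identified that this is the heart of the matter.

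But Step 3, the heart, is not a proof — it is a plan. You write ``I would do this by analyzing how a single $f_i$ acts on the descent composition,'' ``I would normalize the content of $T$, compressing its support to an initial segment,'' and finally ``I expect to handle it by an induction on $n$ that peels off one value at a time.'' The statement you need — that a minimal cross-edge between $B(T_\alpha)$ and $B(T_\beta)$ already occurs in $B(\lambda)_N$, with the same minimal label — is precisely what the paper establishes by a detailed combinatorial argument: writing the weights $\gamma^{(1)}$ and $\gamma^{(2)}=\gamma^{(1)}+b_{i+1}-b_i$ with their divisions into consecutive blocks summing to the parts of $\alpha$ and $\beta$, observing that minimality of the level forces every block away from positions $i,i+1$ to be a single part (otherwise summing an equal multi-part block produces the same cross-edge at a strictly smaller level, with a label $\le i$), bounding the blocks at positions $i,i+1$ by two parts, and then running a case analysis on where the modified entry $i$ sits relative to the horizontal bands of $i-1$, $i+1$ and $i+2$ to see which divisions can be created or destroyed; the surviving configurations force $k=0$ and show adjacent subcomponents differ in descent-composition length by at most $1$. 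Your ``compression of the support'' is the right intuition for the first of these steps, but you flag yourself — accurately — that verifying the compression preserves the two subcomponents and the \emph{minimal} index is the delicate point, and you do not carry it out. As written, the proposal identifies the obstacle and defers it, so the theorem is not proved.
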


The skeleton is then determined for all $n$, and we can define $Skeleton(\lambda) = Skeleton(\lambda)_S$ 
to be the skeleton of $B(\lambda)$, thus giving its underlying structure.\\

We conjecture that the (unoriented and unlabelled) graph structure of the skeleton contains that of the dual equivalence graph for $\lambda$, introduced by Assaf \cite{AssafDualEquivGraph} (see conjecture~\ref{conj:dualEquivGraph}). 
We also conjecture that the induced subgraph of the skeleton holding standard tableaux with fixed number of descents have interesting structures (see conjecture~\ref{conj:structureSubGraphsSkeleton}). \\

\setcounter{theo*}{0}

The third aim of the article is to give some applications to plethysm of the results above (see section~\ref{section:plethysm}), notably counting monomials in plethysms $s_\mu[s_\lambda]$ and understanding how a (symmetric) sum of quasicrystals can be regrouped into connected components associated to Schur functions. To do the latter, we introduce an elegant algorithm. 
This algorithm is 
not more efficient than others curently in use, 
but its structure may help understand better plethysm. \\


Figure~\ref{fig:crystalsOfWordsAndTableaux} illustrates the corresponding connected components of crystals of $GL_n$ on words and tableaux, with words appearing as skew tableaux whose rows are their maximal weakly increasing factors. 


\begin{figure}[h]
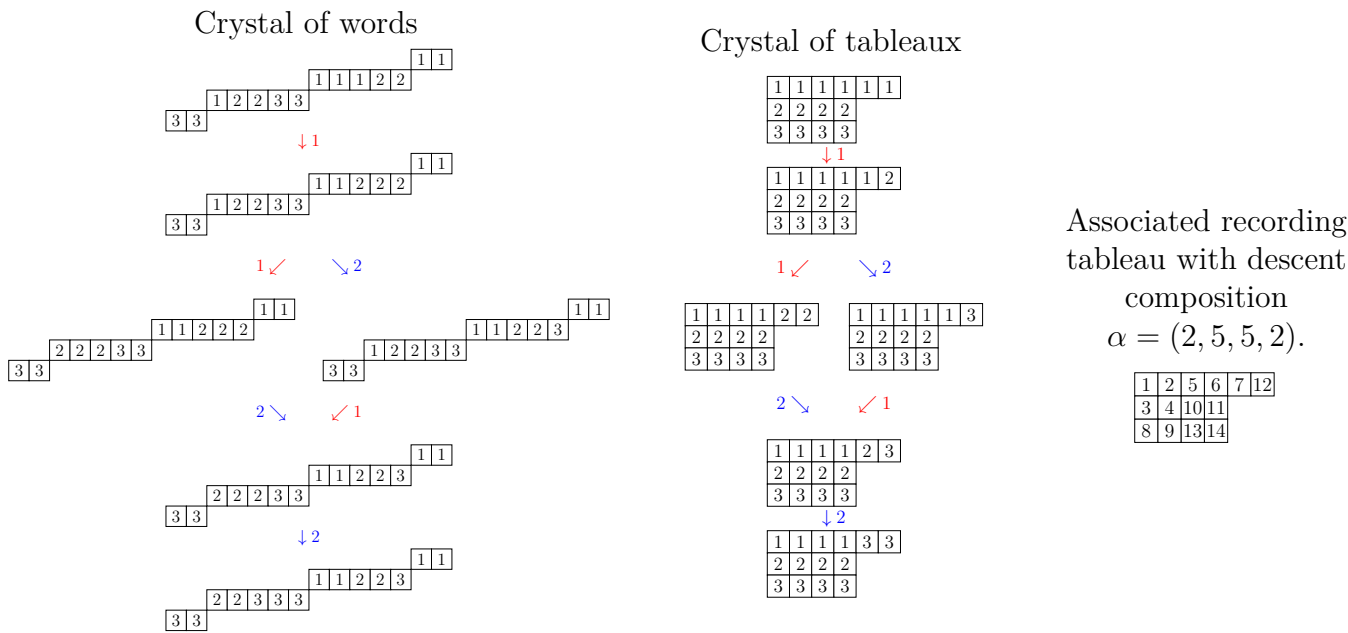

\Yboxdim{0.5cm}
\hspace{-6em}
\begin{minipage}{0.53\textwidth}
	\centering
	\vspace{-0.3em}
	Crystal of words
	
	\vspace{0.2em}
	
	\resizebox{\linewidth}{!}{
		\begin{tabular}{ccc}
			&\gyoung(::::::::::::;1;1,:::::::;1;1;1;2;2,::;1;2;2;3;3,;3;3)&\\
			& \textcolor{red}{$\downarrow 1$}&\\
			&\gyoung(::::::::::::;1;1,:::::::;1;1;2;2;2,::;1;2;2;3;3,;3;3)&\\
			&&\\
			&\textcolor{red}{$1 \swarrow$} $ \quad \quad $ \textcolor{blue}{$\searrow 2$} & \\
			&&\\ &
			\gyoung(::::::::::::;1;1,:::::::;1;1;2;2;2,::;2;2;2;3;3,;3;3) $\quad$ \gyoung(::::::::::::;1;1,:::::::;1;1;2;2;3,::;1;2;2;3;3,;3;3) &\\
			&&\\
			& \textcolor{blue}{$2 \searrow$} $ \quad \quad $ \textcolor{red}{$\swarrow 1$} &  \\
			&&\\
			&\gyoung(::::::::::::;1;1,:::::::;1;1;2;2;3,::;2;2;2;3;3,;3;3)&\\
			& \textcolor{blue}{$\downarrow 2$}&\\
			&\gyoung(::::::::::::;1;1,:::::::;1;1;2;2;3,::;2;2;3;3;3,;3;3)&\\
	\end{tabular}}
\end{minipage}\hspace{0.5em}%
\begin{minipage}{0.29\textwidth}
	\centering
	\vspace{-0.8em}
	Crystal of tableaux
	
	\vspace{0.5em}
	
	\resizebox{\linewidth}{!}{
		\begin{tabular}{ccc}
			&\young(111111,2222,3333)&\\
			& \textcolor{red}{$\downarrow 1$}&\\
			&\young(111112,2222,3333)&\\
			&&\\
			&\textcolor{red}{$1 \swarrow$} $ \quad \quad $ \textcolor{blue}{$\searrow 2$} & \\
			&&\\ &
			\young(111122,2222,3333) $\quad $ \young(111113,2222,3333) & \\
			&&\\
			& \textcolor{blue}{$2 \searrow$} $ \quad \quad $ \textcolor{red}{$\swarrow 1$} &  \\
			&&\\
			&\young(111123,2222,3333)&\\
			& \textcolor{blue}{$\downarrow 2$}&\\
			&\young(111133,2222,3333)&\\
	\end{tabular}}
\end{minipage}\hspace{0.5em}%
\begin{minipage}{0.29\textwidth}
	\centering
	Associated recording tableau
	with descent composition $\alpha = (2,5,5,2)$.
	
	\vspace{0.5em}
	\Yboxdim{0.5cm}
	\resizebox{0.4\linewidth}{!}{\young(12567<12>,34<10><11>,89<13><14>)}
\end{minipage}%
\Yboxdim{0.4cm}
\caption{Isomorphic connected components of crystals on words and tableaux \label{fig:crystalsOfWordsAndTableaux}}
\end{figure}

\section{Background}
\label{section:Background}

\subsection{Crystals on words}
Recall that a \emph{word (of length $k$ on $n$)} is a sequence of $k$ letters $w\in [n]^{\otimes k}$ on the alphabet $[n]=\{1,2,\ldots,n\}$. We say a word $w$ has \emph{weight} $\beta=(\beta_1,\beta_2,\ldots,\beta_n)$ where $\beta_i$ counts occurences of the letter $i$ in $w$.\\

One can define \emph{crystal operators $e_i$ and $f_i$ on words}, for $1\leq i < n$. A general description is that \emph{$e_i$} changes a letter $i+1$ into an $i$, or is null, as \emph{$f_i$} changes an $i$ into an $i+1$, or is null, and if $e_i$, $f_i$ are not null, then $e_i\circ f_i = f_i\circ e_i = Id$.
The exact action of $e_i$ and $f_i$ on words is described using the following \emph{parenthesis rule}: 
\begin{enumerate}
	\item Each letter $i$ is associated to a parenthesis ), and each letter $i+1$, to a parenthesis (.
	\item Coupled parenthesis are removed to obtain a sequence of uncoupled parenthesis 
	$)^{\phi_i}(^{\epsilon_i}$.
	\item $e_i$ acts on the letter $i+1$ corresponding to the leftmost uncoupled parenthesis (, and 
	$f_i$ acts on the letter $i$ corresponding to the rightmost uncoupled parenthesis ).
\end{enumerate}

\begin{ex}
	Let $w = 1331233312233$, and $i = 1$.\\
	
	The associated sequence of parenthesis is
	\resizebox{6cm}{!}{
	\begin{tabular}{ccccccccccccc}
	1&3&3&1&2&3&3&3&1&2&2&3&3\\
	)&&&)&(&&&&)&(&(&&\\	
	\end{tabular}}, which is reduced to \resizebox{6cm}{!}{
	\begin{tabular}{ccccccccccccc}
	1&3&3&1&2&3&3&3&1&2&2&3&3\\
	)&&&\textcolor{red}{)}&&&&&&\textcolor{red}{(}&(&&\\	
\end{tabular}} by removing coupled parenthesis\\. 
Then $f_1(w)=133\textcolor{red}{2}233312233$ and $e_1(w)=1331223331\textcolor{red}{1}233$.
\label{ex:parenthesisRule}
\end{ex}

A \emph{crystal on words} is then a labelled oriented graph on words where the edges $w\rightarrow w'$ are labelled by $i$ if $f_i(w) = w'$. See figure~\ref{fig:crystalsOfWordsAndTableaux} for an example of a 
crystal on words.

\subsection{Crystals on tableaux}

\subsubsection{Tableaux and Schur functions}
Recall that \emph{partitions} are weakly decreasing positive integer vectors $\lambda = (\lambda_1,\lambda_2, \ldots, \lambda_\ell)$. If the sum of the parts of $\lambda$ gives $m$, then we say that $\lambda$ is a \emph{partition of $m$}, noted $\lambda\vdash m$ or $|\lambda|=m$, and use $\ell(\lambda)=\ell$ to denote its number of parts, or length. We identify the partitions with their \emph{Young diagram}, the top- and left-justified array of boxes with $\lambda_i$ boxes in the $i^{th}$ row. A \emph{tableau of shape $\lambda$} is a filling of the cells of $\lambda$ with integers. We say that a tableau is \emph{semistandard} if the entries weakly increase along rows from left to right, and increase down columns. A tableau is said to be \emph{standard} if it is semistandard and entries $1$ to $|\lambda|$ appear exactly once. Unless otherwise stated, we use the word tableau for semistandard tableau.\\

The \emph{weight} (or \emph{filling}) of a tableau is the composition $\gamma = (\gamma_1,\gamma_2,\ldots)$ where $\gamma_i$ counts its entries $i$. To a tableau it is possible to associate a word by using a fixed reading order. 
We use the \emph{row reading order}, noted $rw$: we read rows from left to right, starting with the last row, and ending with the first.

\begin{ex}$t =$ \young(11234,2235,34,5)
	\hspace{-0.5em} is a tableau of shape $(5,4,2,1)$ and weight $(2,3,3,2,2)$.
	
	Its row reading word is $rw(t) = 534223511234$. 
\end{ex}

To a tableau $t$ of weight $\gamma$, it is also possible to associate the monomial $x^t=x_1^{\gamma_1}x_2^{\gamma_2}x_3^{\gamma_3}\ldots$. This gives the connection between tableaux and \emph{Schur functions}: the Schur function associated to a partition $\lambda$ is $\displaystyle s_\lambda=\sum_{t\in SSYT(\lambda)} x^t$, where $SSYT(\lambda)$ is the set of all tableaux of shape $\lambda$.\\

The cells containing entries $i$ in a tableau form a \emph{horizontal band}: each column contains at most one entry $i$, and reading the tableau left to right, each new cell with content $i$ must be weakly North-East (NE) to the preceading ones. We say the NE-most cell of a horizontal band is its head, and its SW-most cell, its tail. 
We say a horizontal band (containing a certain number of entries, up to entries $i$) is \emph{maximal} if adding the "next" horizontal band (of entries $i+1$) is not a horizontal band anymore.\\

We call the set of maximal horizontal bands of a tableau its \emph{minimal parsing}, and say it has \emph{type $\alpha$} if the length of the $i^\text{th}$ maximal horizontal band is given by $\alpha_i$. Among the tableaux with a fixed minimal parsing of type $\alpha$, there is a unique one which also has weight $\alpha$, obtained by filling the $i^{\text{th}}$ maximal horizontal band by entries $i$, for all $i$. We denote these tableaux $T_\alpha$. 

\begin{ex}
	Let $T=$ \gyoung(;1;1!\rd;3!\bl;5,!\rd;2;3!\bl;4,;4,!\wt;7) and $T'=$ \gyoung(;1;1!\rd;2;2!\bl;3;3,!\rd;2!\bl;3;3;3,;3), where the maximal horizontal bands of the two tableaux are distinguished.\\ 
	
	$T$ has minimal parsing of type $(2,3,3,1)$, and its horizontal bands (of individual entries $i$) are not all maximal. The tableau $T_{(2,3,3,1)}$ with same minimal parsing appears in example~\ref{ex:StdTab}. For its part, $T'$ has minimal parsing of type $(2,3,6)$, and all its horizontal bands are maximal. It is then equal to $T_{(2,3,6)}$ for this specific minimal parsing of type $(2,3,6)$. 
	\label{ex:maxHorBands}
\end{ex}

The integer vector $\alpha = (\alpha_1,\alpha_2,\ldots,\alpha_s)$ giving the length of the maximal horizontal bands in a tableau is what we'll call its \emph{descent composition}. This corresponds to the traditional notion of descent composition (in standard tableaux), as we see below.\\

Using the minimal parsing allows us to \emph{standardize a tableau}: entries in the first maximal horizontal band are relabelled by $1$ to $\alpha_1$, the ones in the second, by $\alpha_1+1$ to $\alpha_1+\alpha_2$, etc. This gives the same result as standardizing a tableau through its reading word, as seen below.\\

For standard tableaux, we can consider their \emph{descents}: entries $i$ such that $i+1$ appears in a row of greater index. To descent sets, we can associate bijectively a \emph{descent composition}: if $\{i_1<i_2<\ldots<i_k\}$ is the descent set of a standard tableau with $m$ cells, then $\alpha = (i_1,i_2-i_1,i_3-i_2,\ldots, m-i_k)$ is the associated descent composition. It is a composition of $m$, and gives the lengths of the maximal horizontal bands of the standard tableau. There is then a unique semistandard tableau with weight $\alpha$ and same minimal parsing as the standard tableau. 
This gives the following.

\begin{prop}
	Semistandard tableaux with minimal parsing of type and weight $\alpha$ are in bijection with standard tableaux with descent composition $\alpha$.
	\label{prop:bijTabStdTabBandesMax}
\end{prop}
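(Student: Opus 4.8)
The plan is to exhibit an explicit pair of mutually inverse, shape-preserving maps between the two sets. Write $b_0=0$ and $b_j=\alpha_1+\cdots+\alpha_j$ for $1\le j\le s$, where $\alpha=(\alpha_1,\ldots,\alpha_s)$. A semistandard tableau with minimal parsing of type and weight $\alpha$ is, as explained above, exactly a tableau of the form $T_\alpha$ — one for each minimal parsing of type $\alpha$ — and its $j$-th maximal horizontal band is its class of cells carrying the entry $j$, of length $\alpha_j$. To such a $T_\alpha$ I associate its standardization $S$: by the description of standardization via the minimal parsing (which agrees with standardization through the reading word), $S$ is obtained by relabelling the $j$-th band by the interval $\{b_{j-1}+1,\ldots,b_j\}$, the relabelling inside a band respecting the row-reading order. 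In the other direction, to a standard tableau $S$ with $DesComp(S)=\alpha$ I associate the filling $\Phi(S)$ obtained by replacing every entry lying in $\{b_{j-1}+1,\ldots,b_j\}$ by $j$. Both maps clearly preserve the shape.

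The first step is to check that the standardization $S$ of $T_\alpha$ is standard with $DesComp(S)=\alpha$. Standardization always produces a standard tableau of the same shape, so only the claim that the descent set of $S$ is $\{b_1,\ldots,b_{s-1}\}$ is at stake. A maximal band of $T_\alpha$ is a horizontal strip, so in row-reading order its cells are visited from its tail (its SW-most, lowest-lying cell) to its head (its NE-most, highest-lying cell), each cell weakly north of its predecessor; hence no position strictly between $b_{j-1}$ and $b_j$ is a descent of $S$. Conversely each $b_j$ with $1\le j\le s-1$ is a descent: since the $j$-th band is maximal, the cells of $T_\alpha$ carrying $j$ and $j+1$ do not together form a horizontal strip, so some column contains a $j$ strictly above a $j+1$; the head of the $j$-th band, which receives the label $b_j$, then lies weakly above that $j$, and the tail of the $(j+1)$-th band, which receives the label $b_j+1$, lies weakly below that $j+1$, so $b_j+1$ occupies a strictly lower row than $b_j$. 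Therefore $DesComp(S)=\alpha$.

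The second step is to check that $\Phi(S)$ is of the form $T_\alpha$ and that the two maps are inverse to one another. Rows of $\Phi(S)$ stay weakly increasing because the blocks $\{b_{j-1}+1,\ldots,b_j\}$ are intervals; columns stay strictly increasing because the $j$-th block of $S$ is its $j$-th maximal band, a horizontal strip, hence meets each column at most once, so two cells of a common column of $S$ belong to different blocks and receive distinct, correctly ordered values in $\Phi(S)$. Thus $\Phi(S)$ is semistandard, and it has weight $\alpha$ by construction. Its classes of equal entries are the maximal bands of $S$, of lengths $\alpha_1,\ldots,\alpha_s$, and they are maximal: the $j$-th and $(j+1)$-th maximal bands of $S$ share a column (otherwise they would have been merged), which becomes a column of $\Phi(S)$ carrying a $j$ above a $j+1$, so the $j$'s cannot absorb the $j+1$'s. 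Hence $\Phi(S)$ has minimal parsing of type $\alpha$ and weight $\alpha$, and by the uniqueness of such a tableau it is a $T_\alpha$. Finally the maps are inverse: standardization relabels the $j$-th band of $T_\alpha$ exactly by the $j$-th block, so $\Phi$ undoes it; and conversely, since inside a horizontal strip the row-reading order of the cells agrees with the increasing order of the entries $S$ puts on them (each step there being a non-descent, hence weakly north), standardizing $\Phi(S)$ returns $S$.

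The semistandardness/standardness verifications and the appeal to the uniqueness of $T_\alpha$ are routine. The one point carrying genuine content, used symmetrically in both directions, is the equivalence ``two consecutive maximal bands cannot be merged'' $\iff$ ``they share a column'' $\iff$ ``the standardization has a descent at the corresponding boundary position''. The delicate part is keeping the orientation conventions straight — head versus tail of a horizontal strip against the SW-to-NE row-reading order — and I expect this to be the only real obstacle.
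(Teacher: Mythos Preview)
Your proof is correct and follows the same approach the paper sketches in the discussion preceding the proposition: the bijection is standardization in one direction and ``de-standardization'' (your $\Phi$) in the other, with the key observation being that maximality of a horizontal band is equivalent to sharing a column with the next band, which in turn is equivalent to a descent at the boundary position. The paper does not write out a formal proof, treating the proposition as an immediate consequence of that discussion; your version simply fills in the details explicitly.
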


\begin{ex}
	The tableau $T_{(2,3,3,1)}=\young(1123,223,3,4)$ standardizes to $std(T_{(2,3,3,1)})=\young(1258,347,6,9)$. This standard tableau has descent set $\{2,5,8 \}$, and descent composition $(2,3,3,1)$. The descent composition then gives the lengths of the maximal horizontal bands in $T_{(2,3,3,1)}$ and $std(T_{(2,3,3,1)})$. Note that all tableaux with the same minimal parsing of type $(2,3,3,1)$, like the tableau $T$ in example~\ref{ex:maxHorBands}, standardize to $std(T_{(2,3,3,1)})$. 
	
	\label{ex:StdTab}
\end{ex}

We will see in section~\ref{section:QSym} that descent compositions are used to define fundamental quasisymmetric functions, which are central to our study.\\

The \emph{descents of a word $w$} are the positions $i$ such that $w_i>w_{i+1}$. The descent composition 
of a word $w$ corresponds to the lengths of maximal weakly increasing factors in $w$. Words can then be standardized uniquely in a way that preserves descents: entries $i$ of $w$ are replaced from left to right by entries $\beta_1+\beta_2+\ldots+\beta_{i-1}+1$ to $\beta_1+\beta_2+\ldots+\beta_{i}$, where $\beta_j$ counts letters $j$ in $w$. 
A tableau $T$ can be standardized by standardizing its reading word $rw(T)$, which gives the same result as above. 
%
%
%

\subsubsection{Crystals of tableaux}

\emph{Crystal operators } on tableaux are defined as applying the (word) crystal operators on its reading word, and changing the corresponding entry in the tableau. Tableaux obtained in this way are always semistandard \cite{BumpSchilling}. These crystal operators define an oriented graph structure on the set of semistandard tableaux, where there is an arrow from $T$ to $T'$ labelled $i$ if $T'=f_i(T)$.\\

Since only entries change and the shape is fixed, the connected components regroup all the tableaux of the same shape $\lambda$ which we denote $B(\lambda)_n$ if the fixed maximal entry is $n$. It then corresponds to the irreducible character $\chi^\lambda$ of $GL_n$ given by the Schur function $s_\lambda(x_1,x_2,\ldots,x_n)$. More generally, we can also consider the infinite graph $B(\lambda)$ corresponding to $s_\lambda$. 
The unique source vertex of $B(\lambda)$ (and any $B(\lambda)_n$) is the tableau of shape and filling $\lambda$, which we denote $1_\lambda$. Note that it corresponds to $T_{\lambda}$.\\

For example, $1_{(5,4,2)} = T_{(5,4,2)} =$ \young(11111,2222,33).\\

Crystals of tableaux are especially important to study, because any connected crystal of type $A_{n-1}$ is isomorphic to a crystal of tableau:
\begin{theo}[\cite{BumpSchilling}] For any connected Stembridge crystal $C$ for $GL_n(\C)$ (of type $A_{n-1}$), there is a unique source. Its weight is a partition $\lambda$ and $C \isom B(\lambda)$.
\label{prop:IsomCristalTab}
\end{theo}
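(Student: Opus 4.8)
The plan is to prove the statement in four moves: produce a source, show it is unique, identify its weight with a partition, and then invoke Stembridge's local characterization to pin down the isomorphism type. First I would establish existence of a source. Pick any vertex $b_0\in C$. As long as $e_i(b_0)\neq 0$ for some $i$, replacing $b_0$ by $e_i(b_0)$ adds a simple root $\alpha_i$ to the weight, so the weight strictly increases in dominance order. Since only finitely many weights occur in $C$, iterating $e_i$'s terminates at a vertex $u$ with $e_i(u)=0$ for all $i$, i.e.\ a source.

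Next I would argue that the source is unique, and this is exactly where the Stembridge axioms do the real work: the local relations constrain the $i,j$-string data $\varepsilon_i,\varphi_i$ so rigidly that, travelling along any path in the connected graph, one can reconstruct a chain of $e_i$'s from any vertex up to $u$; a second vertex $u'$ with all $e_i(u')=0$ would then force $\varepsilon_i(u)>0$ for some $i$, a contradiction. In the present exposition I would cite \cite{BumpSchilling} (and ultimately Stembridge's local characterization of simply-laced crystals) for this step rather than reproduce the argument. Once uniqueness is in hand, the weight $\mathrm{wt}(u)$ is dominant: from $\varepsilon_i(u)=0$ we get $\langle \mathrm{wt}(u),\alpha_i^\vee\rangle=\varphi_i(u)\geq 0$ for every $i$, so $\mathrm{wt}(u)$ is a dominant integral weight for $GL_n$; normalizing so that all entries are nonnegative (automatic in the tableau model), this is precisely a partition $\lambda$ with at most $n$ parts.

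Finally, for the isomorphism: the crystal $B(\lambda)$ of semistandard tableaux of shape $\lambda$ with entries in $[n]$ is itself a connected Stembridge crystal whose unique source is $1_\lambda=T_\lambda$, of weight $\lambda$. By Stembridge's reconstruction theorem, a connected Stembridge crystal is determined up to isomorphism by the weight of its source, so $C$ and $B(\lambda)$, having equal highest weights, are isomorphic as crystals.

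The genuinely hard input is Stembridge's local characterization theorem — both the uniqueness of the source and the recovery of the entire crystal from its highest weight; everything else is bookkeeping with weights and the terminating $e_i$-process. Accordingly, the proof here is really a citation of \cite{BumpSchilling} for that core step, with the weight arguments supplied around it.
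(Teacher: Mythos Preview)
The paper does not supply its own proof of this theorem: it is stated with the attribution \cite{BumpSchilling} and used as a black box, with no proof environment following it. So there is no ``paper's proof'' to compare against.

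Your sketch is a reasonable outline of the standard argument and correctly identifies the load-bearing ingredients: existence of a source by the terminating $e_i$-process (weights strictly increase in dominance order and only finitely many occur), dominance of the source weight via $\langle \mathrm{wt}(u),\alpha_i^\vee\rangle=\varphi_i(u)-\varepsilon_i(u)=\varphi_i(u)\geq 0$, and then Stembridge's local characterization both for uniqueness of the source and for the reconstruction of the whole crystal from its highest weight. You are also honest that the genuinely hard content---uniqueness and reconstruction---is being cited rather than proved. One small point: your uniqueness paragraph gestures at an argument (``travelling along any path \ldots\ one can reconstruct a chain of $e_i$'s'') that is not quite the way Stembridge's proof actually goes, and as written it is too vague to stand on its own; but since you then defer to \cite{BumpSchilling} anyway, this does not damage the overall sketch. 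In short, your proposal is an accurate roadmap, and the paper simply takes the same destination for granted.
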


Crystals of words and of tableaux are strongly linked through the Robinson-Schensted-Knuth (RSK) algorithm, jeu de taquin, and the plactic and coplactic monoids (see section~\ref{section:RSKPlacticCoplactic}).

\subsection{Fundamental quasisymmetric functions and descent compositions}
\label{section:QSym}

The ring of quasisymmetric functions $QSym$, introduced by Gessel, generalizes and contains the ring of symmetric functions \cite{Gessel}. We will consider the basis of $QSym$ given by the \emph{fundamental quasisymmetric functions}, which are indexed by compositions: 
\[
F_\alpha = \sum_{\alpha\preccurlyeq \beta} M_\beta \text{, where } M_\beta = \sum_{i_1<i_2<\ldots<i_k} x_{i_1}^{\beta_1} x_{i_2}^{\beta_2} \ldots x_{i_k}^{\beta_k},
\]
and $\alpha\preccurlyeq \beta$ indicates that $\beta$ is a refinement of $\alpha$: adjacent parts of $\beta$ can be summed to obtain $\alpha$. The $M_\beta$ are \emph{monomial quasisymmetric functions}, and also form a basis of $QSym$. \\

For example, $\beta_1=(2,1,3,2,4,1)$ and $\beta_2=(1,1,3,1,2,1,1,1,1,1,)$ are distinct, but incomparable, refinements of $\alpha=(2,4,2,5)$. Therefore $M_{\beta_1}$ and $M_{\beta_2}$ both appear in $F_\alpha$.\\

Schur functions (which are also quasisymetric functions) then decompose in the basis of
fundamental quasisymmetric functions. We will use the decomposition below, proved recently by Gessel in a short article~\cite{Gessel2019}, by using horizontal bands in standard tableaux, there called runs, and an involution acting on them. 

\begin{equation*} s_\lambda = \sum_{T\in SYT(\lambda)} F_{DesComp(T)}.
\tag{$\ast$}
\label{eq:Gessel}
\end{equation*}

\section{Decomposing a crystal into subcomponents corresponding to fundamental quasisymmetric function}

We now show how the above formula~(\ref{eq:Gessel}) expressing $s_\lambda$ as a sum of fundamental quasisymmetric functions $F_\alpha$ induces a decomposition of the crystal of tableaux $B(\lambda)$. Each subcomponent of the decomposition would then correspond precisely to one of the $F_\alpha$ appearing in~(\ref{eq:Gessel}). \\

We remark that crystal operators on tableaux do not necessarily preserve descent compositions. This is because changing the value of entries can modify the maximal horizontal bands along with the weight of the tableau. However, tableaux of the same parsing will be grouped together in connected subcomponents of $B(\lambda)$: 

\begin{prop}
	The set of tableaux of shape $\lambda$ with a fixed parsing of type $\alpha$ form a connected induced subgraph of $B(\lambda)$. 
	Its source is the tableau $T_\alpha$ with filling and (same) minimal parsing of type $\alpha$, and its vertices give the monomials appearing in $F_\alpha$.
	\label{thm:SourcesQuasiSym}
\end{prop}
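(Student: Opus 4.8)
The plan is to establish three things: (1) that the subgraph is induced and its vertices index the monomials of $F_\alpha$; (2) that $T_\alpha$ lies in this subgraph and is a source; and (3) that the subgraph is connected. For (1), recall that by Proposition~\ref{prop:bijTabStdTabBandesMax} the tableaux of shape $\lambda$ with minimal parsing of type $\alpha$ standardize to the same standard tableau $std(T_\alpha)$, which has descent composition $\alpha$. The monomial $x^T$ of such a tableau $T$ has exponent vector $\beta = \mathrm{wt}(T)$, and semistandardness together with the fixed parsing forces $\beta$ to be a refinement of $\alpha$ in exactly the way that makes $x^\beta$ a term of $M_\beta \subseteq F_\alpha$: within each maximal horizontal band the entries used are consecutive, so the band of length $\alpha_i$ is split among a consecutive block of values, i.e. $\alpha \preccurlyeq \beta$. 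Conversely every term $M_\beta$ with $\alpha \preccurlyeq \beta$ is realized by some such $T$. This gives the bijection between vertices and monomials of $F_\alpha$. That the subgraph is \emph{induced} is immediate from the definition of the induced subgraph on a vertex subset.

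For (2), $T_\alpha$ is by definition the unique tableau with weight and minimal parsing of type $\alpha$, so it is a vertex. To see it is a source I would check $e_i(T_\alpha) = 0$ for all $i$: since $T_\alpha$ has weight $\alpha$ and is filled so that band $i$ consists entirely of entries $i$, its reading word $rw(T_\alpha)$ has all its $i$'s appearing (in reading order) before consideration against $i+1$'s in a way that leaves no unbracketed "(" — concretely, in the row-reading word every letter $i+1$ sits to the left of or level with some letter $i$ coming later in reading order, so all parentheses "(" for $i+1$ are matched. I would verify this via the head/tail description of horizontal bands: maximality of the parsing is exactly the condition that band $i{+}1$ cannot be absorbed, which translates into the parenthesis matching that kills $e_i$.

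For (3), connectedness, the idea is that any tableau $T$ with parsing $\alpha$ can be transported to $T_\alpha$ by a sequence of crystal operators $e_i$ that \emph{stay within} the subgraph. Since the whole component $B(\lambda)$ is connected with unique source $1_\lambda = T_\lambda$, applying $e_i$'s repeatedly to $T$ reaches $1_\lambda$; the point is to show one can choose these moves so the parsing type is never changed until — if ever — one is forced out, and that in fact $T_\alpha$ is reached first. I would argue that among all tableaux with parsing $\alpha$, the weight partial order (dominance on the block-sums determined by $\alpha$) has $\alpha$ itself as the unique minimum, and that whenever $T \neq T_\alpha$ has parsing $\alpha$ there exists $i$ with $e_i(T) \neq 0$ and $e_i(T)$ still of parsing $\alpha$: raising an entry $i{+}1$ to $i$ inside a band keeps that band, and the parenthesis rule picks exactly such an entry as long as the weight is not yet $\alpha$. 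Iterating strictly decreases the weight toward $\alpha$, so the process terminates at $T_\alpha$, proving connectedness.

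The main obstacle is step (3), and specifically the claim that the $e_i$-move furnished by the parenthesis rule can always be taken to preserve the parsing type when $T$ is not yet $T_\alpha$. One must rule out the situation where every available $e_i$ merges two maximal bands or splits one; I expect this is handled by a careful local analysis of the head and tail cells of consecutive bands, showing that the leftmost unbracketed "(" for the relevant $i$ corresponds to a cell whose removal from band $i{+}1$ and addition to band $i$ respects maximality on both sides. Making this argument clean — rather than a long case check on column interactions — is the crux; I would try to phrase it in terms of the standardization $std(T) = std(T_\alpha)$ being invariant under these moves, so that "parsing preserved" becomes "standardization unchanged," which is a statement about the coplactic (crystal) structure on words that is known to preserve descent compositions.
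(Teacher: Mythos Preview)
Your overall plan is sound, but steps (2) and (3) contain genuine errors.

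\textbf{Step (2) is wrong as stated.} You claim $e_i(T_\alpha)=0$ for all $i$, but this only holds when $\alpha=\lambda$ (i.e.\ when $T_\alpha=1_\lambda$). For a concrete counterexample take $\lambda=(3,2)$ and $\alpha=(2,3)$: then $T_\alpha$ has reading word $22112$, the parenthesis word for $i=1$ is $(\,(\,)\,)\,($, and $e_1(T_\alpha)$ is the tableau of weight $(3,2)$. What is true---and what you actually need---is the weaker statement that $e_i(T_\alpha)$, when nonzero, has a \emph{different} minimal parsing, so that no edge of the induced subgraph enters $T_\alpha$. This follows immediately from weights: any tableau in the subgraph has weight $\gamma$ with $\alpha\preccurlyeq\gamma$, whereas $e_i(T_\alpha)$ has weight $\alpha+b_i-b_{i+1}$, whose partial sums miss $\alpha_1+\cdots+\alpha_i$.

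\textbf{Step (3) has a real gap.} You correctly identify the crux: when $T\neq T_\alpha$ has parsing $\alpha$, you must exhibit some $e_i$ with $e_i(T)$ still of parsing $\alpha$. Your proposed fix---invoking that ``the coplactic (crystal) structure on words is known to preserve descent compositions''---does not work. That statement concerns the descent set of the \emph{word} $rw(T)$, which is \emph{not} the descent composition of the tableau $T$ (the latter is the descent set of $std(T)$, equivalently of the $Q$-tableau, not the $P$-tableau). Indeed the paper explicitly notes that crystal operators on tableaux do \emph{not} in general preserve the tableau's descent composition---that is precisely why the subgraph is proper. So this line of argument is circular.

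\textbf{How the paper handles it.} The paper avoids both difficulties by going in the forward direction: it writes down an explicit sequence of $f_i$'s taking $T_\alpha$ to an arbitrary $T$ of weight $\gamma$ in the subgraph, modifying the last maximal horizontal band first, then the previous one, and so on. One then just checks that every intermediate tableau keeps the same minimal parsing, which is straightforward because each block of operators acts entirely within a single band. This simultaneously proves connectedness and that $T_\alpha$ is the source, with no case analysis of the parenthesis rule and no appeal to coplactic invariance. Your $e_i$-approach can be repaired by literally reversing this explicit sequence, but as written it does not stand.
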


An example of the decomposition can be seen in figure~\ref{figure:CrystalDecompInQuasiSymFcts}.

\begin{proof}
	Let's consider the definition of $F_\alpha$. If $T_\alpha$ is a tableau of filling and parsing into horizontal bands of type $\alpha$, then $x^{T_\alpha}=x^\alpha$ appears in $M_\alpha\subseteq F_\alpha$, since $\alpha\preccurlyeq \alpha$. Now, any refinement $\alpha\preccurlyeq \beta$ gives a (non-)minimal parsing of the same maximal horizontal bands. Any filling $\gamma$ obtained from $\beta$ by (potentially) adding zero parts gives a valid filling of the same (non-)maximal horizontal bands, and the associated monomial will appear in $M_\beta$. In particular, $\alpha\preccurlyeq \beta\preccurlyeq \gamma $. Therefore, the set of tableaux of shape $\lambda$ with fixed minimal parsing of type $\alpha$ (and any weight $\alpha\preccurlyeq \gamma$) gives all monomials of $F_\alpha$.\\
	
	Crystal operators $f_i$, for $1\leq i\leq n-1$, modify the weight of tableaux by $b_{i+1}-b_{i}$, where $b_i$ is the vector with zeros everywhere except in position $i$. If a tableau $T$ of weight $\gamma$ has minimal parsing of type $\alpha$, then $f_i(T)$ has the same parsing if and only if $\alpha\preccurlyeq \gamma+(b_{i+1}-b_{i})$. This follows from the above discussion.\\
	
	The set of tableaux with the same minimal parsing, and so the same descent composition $\alpha$, form a subset of the vertices of $B(\lambda)$. Among these tableaux, there is only one with filling $\alpha$, $T_\alpha$. We will now show that every tableau in the subset of vertices can be obtained by a certain sequence of crystal operators from $T_\alpha$, thus showing that the induced subgraph is connected, and that $T_\alpha$ is its source. \\
	
	Let $T$ be a tableau of weight $\gamma$ with the same parsing of type $\alpha$, then there is a set of sets of consecutive parts of $\gamma$ such that the sum of the parts in every set gives a part of $\alpha$.
	Let \[\{ \{ \gamma_1,\gamma_2,\ldots,\gamma_{k_1}\}, \{\gamma_{k_1+1},\ldots,\gamma_{k_2} \}, \ldots, \{ \gamma_{k_{s-1}+1}, \ldots, \gamma_{k_{s}} \} \}\] be such a set of sets, with $k_1<k_2<\ldots<k_s=\ell(\gamma)$, so $\displaystyle \sum_{j=k_{r-1}+1}^{k_r} \gamma_j = \alpha_r$ for $1\leq r\leq s=\ell(\alpha)$.\\
	
	Then the following sequence of crystal operators applied to $T_\alpha$ gives $T$:
	\begin{center}
	$(f_1)^{\gamma_2}\circ (f_2\circ f_1)^{\gamma_3}\circ (f_3\circ f_2\circ f_1)^{\gamma_4}\circ \ldots\circ (f_{k_1-1}\circ \ldots\circ f_2\circ f_1)^{\gamma_{k_1}} \circ \ldots \circ$\\
	$(f_{k_{(j-1)}}\circ \ldots\circ f_{j+1}\circ f_j)^{\gamma_{k_{(j-1)}+1}} \circ \ldots \circ (f_{k_{j}-1}\circ \ldots\circ f_{j+1}\circ f_j)^{\gamma_{k_{j}}} \circ \ldots \circ$\\
	$(f_{k_{(s-1)}}\circ \ldots\circ f_{s+1}\circ f_s)^{\gamma_{k_{(s-1)}+1}} \circ \ldots \circ (f_{k_{s}-1}\circ \ldots\circ f_{s+1}\circ f_s)^{\gamma_{k_{s}}} \quad (T_\alpha) \quad =\quad  T.$
	\end{center}

This sequence changes entries in the last horizontal band first, then in the previous to last, etc. until the entries in the first maximal horizontal band are changed, and the obtained tableau is $T$.
Moreover, it is straightforward to see that every intermediate tableau also has the same minimal parsing into horizontal bands.\\
	
	Finally, the labelled oriented subgraph on tableaux with the same minimal parsing of type $\alpha$, with labels of edges given by the application of crystal operators which preserve minimal parsing, gives a connected \textit{induced} subgraph of $B(\lambda)$. This is because crystal operators which preserve the minimal parsing remain crystal operators, and if there is an edge between two tableaux in the subset, then the crystal operator applied preserves the minimal parsing.
	%
\end{proof}

\begin{rem}
	The induced subgraphs are not generally crystals. They are however isomorphic (as oriented graphs) to crystals $B(\mu)$ after re-labelling of vertices and oriented edges, as we will see in section~\ref{section:Subcrystals}. 
\end{rem}

We denote the induced subgraphs of $B(\lambda)$ with minimal parsing of type $\alpha$ 
by $B(T_\alpha)$, where the tableaux $T_\alpha$ are the source vertices. Note that there may be many subcomponents associated to the same composition $\alpha$, which means that $F_\alpha$ occurs more than once in $s_\lambda$. In particular, this is simply because there can be many ways to decompose $\lambda$ into maximal horizontal bands of respective lengths $\alpha_i$. We will see in the next section that all subcomponents associated to the same composition $\alpha$ are in fact isomorphic.

\begin{theo*}
	The connected crystal $B(\lambda)_n$, of tableaux of shape $\lambda$ and maximal entry $n$, is partitionned into disjoint connected induced subgraphs $B(T_\alpha)$ which correspond to quasisymmetric functions $F_\alpha(x_1,\ldots,x_n)$, where the subsets of vertices are tableaux with a fixed descent composition $\alpha$. The sources $T_\alpha$ of these subcomponents have filling and minimal parsing of type $\alpha$. 
	The number $f^\lambda_\alpha$ of subgraphs of type $\alpha$ is the number of standard tableaux of shape $\lambda$ and descent composition $\alpha$.
	\label{theo:decompBlambdaIntoSubcomponents}
\end{theo*}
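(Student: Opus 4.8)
The plan is to deduce this statement from Proposition~\ref{thm:SourcesQuasiSym} together with the bijection of Proposition~\ref{prop:bijTabStdTabBandesMax}, the glue being that the minimal parsing of a semistandard tableau is canonically determined by the tableau itself.

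First I would record that every semistandard tableau of shape $\lambda$, with arbitrary maximal entry, has a \emph{unique} decomposition into maximal horizontal bands $P_1,\ldots,P_s$, hence a well-defined descent composition $\alpha=(|P_1|,\ldots,|P_s|)$. By Proposition~\ref{thm:SourcesQuasiSym}, for each such parsing $P$ the tableaux of shape $\lambda$ whose minimal parsing is exactly $P$ form a connected induced subgraph of $B(\lambda)$ whose unique source is the weight-$\alpha$ tableau $T_\alpha=T_P$ (fill $P_i$ with $i$'s), and whose vertex monomials are exactly those of $F_\alpha$. Because the minimal parsing is unique, distinct parsings give disjoint subgraphs and every vertex of $B(\lambda)$ lies in one of them; restricting to entries at most $n$ yields the claimed partition of $B(\lambda)_n$. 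Here $B(T_P)$ occurs in $B(\lambda)_n$ precisely when $\ell(\alpha)\le n$, since a tableau with $s$ maximal bands uses at least $s$ distinct entries (the bands partition the value range into consecutive nonempty blocks) while conversely $T_\alpha$ uses exactly the entries $1,\ldots,s$; and when it occurs, its vertices realize exactly the monomials of $F_\alpha(x_1,\ldots,x_n)$. The assertions about the source $T_\alpha$ are then literally those of Proposition~\ref{thm:SourcesQuasiSym}.

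Next I would count the subgraphs of a fixed type $\alpha$. Since each subgraph has a unique source, they are in bijection with the weight-$\alpha$ sources occurring in $B(\lambda)$, i.e.\ with the semistandard tableaux of shape $\lambda$ having weight $\alpha$ and minimal parsing of type $\alpha$; by Proposition~\ref{prop:bijTabStdTabBandesMax} these are in bijection with the standard tableaux of shape $\lambda$ with descent composition $\alpha$, whose number is $f^\lambda_\alpha$ by definition. This gives the last assertion, and taking generating functions over the partition recovers $s_\lambda(x_1,\ldots,x_n)=\sum_\alpha f^\lambda_\alpha F_\alpha(x_1,\ldots,x_n)$, consistent with Gessel's formula~(\ref{eq:Gessel}) once its standard tableaux are grouped by descent composition.

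The only point needing a little care is whether one wants the full character identity $\sum_{v\in B(T_\alpha)_n}x^v=F_\alpha(x_1,\ldots,x_n)$ rather than merely the equality of monomial supports granted by Proposition~\ref{thm:SourcesQuasiSym}; this amounts to showing that $B(T_\alpha)$ carries exactly one tableau of each admissible weight, which also follows a posteriori from the identification with $B(m)$ in Theorem~\ref{theo:IsomWithBm}. I expect this multiplicity-one bookkeeping to be the main (and rather minor) obstacle — everything else is a direct assembly of the two cited propositions and the uniqueness of the minimal parsing.
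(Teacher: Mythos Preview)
Your proposal is correct and follows essentially the same approach as the paper: uniqueness of the minimal parsing gives disjointness, Proposition~\ref{thm:SourcesQuasiSym} gives connectedness together with the identification of sources and of the monomials with those of $F_\alpha$, and Proposition~\ref{prop:bijTabStdTabBandesMax} gives the count $f^\lambda_\alpha$, with Gessel's formula~(\ref{eq:Gessel}) as a consistency check. Your added remarks on the condition $\ell(\alpha)\le n$ and on the multiplicity-one bookkeeping are reasonable elaborations that the paper leaves implicit.
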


\begin{ex}
	Figure~\ref{figure:CrystalDecompInQuasiSymFcts} shows the decomposition of $B(4,3)_4$, into subcomponents associated to quasisymmetric functions. Those associated to a descent composition $\alpha$ appear in the same color as that associated to $\overset{\leftarrow}{\alpha}$. We show in section~\ref{section:dualposition} that pairs of subcomponents associated respectively to descent compositions $\alpha$ and $\overset{\leftarrow}{\alpha}$ are positionned dualy in the crystal. 

	\begin{figure}
		\hspace{-5.5em}
		\includegraphics[width=1.25\linewidth]{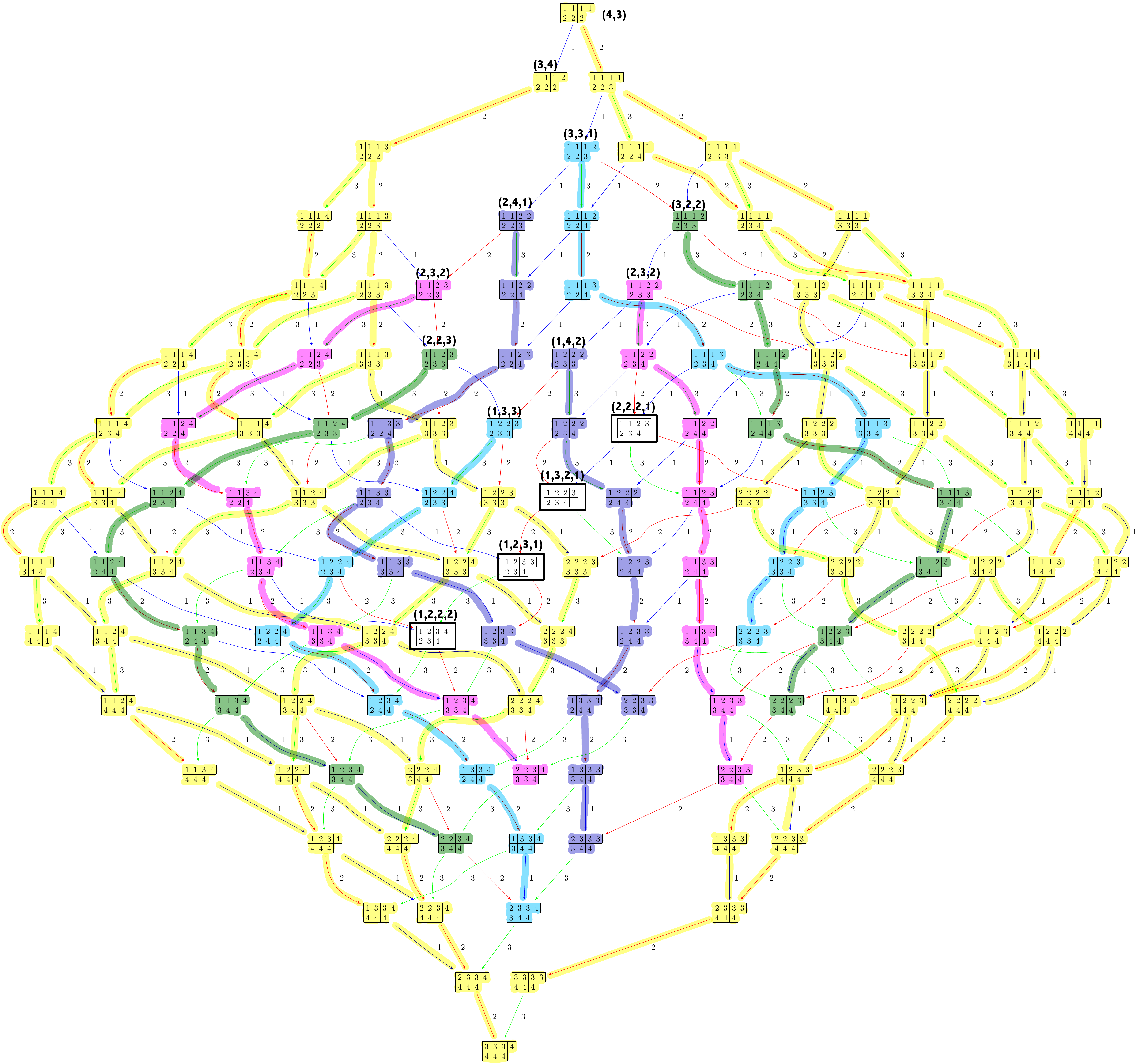}
		\caption{Decomposition of $B(4,3)_4$ into its subcomponents associated to fundamental quasisymmetric functions $F_\alpha$:  $s_{(4,3)} = F_{(4,3)}+F_{(3,4)}+F_{(3,3,1)}+F_{(2,4,1)}+F_{(3,2,2)}+2\cdot F_{(2,3,2)}$\newline $+F_{(2,3,3)}+F_{(2,2,3)}+F_{(1,4,2)}+F_{(1,3,3)}+F_{(2,2,2,1)}+ F_{(1,3,2,1)}+F_{(1,2,3,1)}+F_{(1,2,2,2)}$. \label{figure:CrystalDecompInQuasiSymFcts}}
	\end{figure}
\end{ex}


\begin{proof}[Proof of theorem~\ref{theo:decompBlambdaIntoSubcomponents}]
	The minimal parsing is uniquely determined for any tableau, so the sets of tableaux with given minimal parsing are disjoint. As we have seen above, each of these sets induce a connected subgraph of $B(\lambda)$ with source $T_\alpha$, and the monomials associated to these tableaux form a quasisymmetric functions $F_\alpha$.
	Finally, the sources $T_\alpha$ are put in bijection with standard tableaux of descent compositions $\alpha$ and same minimal parsing by using proposition~\ref{prop:bijTabStdTabBandesMax}.
	By the formula of Gessel,
	this confirms that we get the right number of subcomponents associated to each $F_\alpha$ in $B(\lambda)$.
\end{proof}

\begin{prop}
	Compositions $\alpha=(\alpha_1,\alpha_2,\ldots,\alpha_s)$ which appear as descent compositions of tableaux in $B(\lambda)_n$, for $\lambda= (\lambda_1, \lambda_2,\ldots \lambda_\ell)\vdash m$, have the following properties.

	\begin{enumerate}[itemsep=0ex]
		\item $1\leq \alpha_i\leq\lambda_1$,
		\item $\alpha_1+\alpha_2+\ldots + \alpha_j\leq \lambda_1+ \lambda_2 + \ldots + \lambda_j$ for all $j$,
		\item $s\leq (\lambda_2+\lambda_3+\ldots + \lambda_\ell) + 1$,
		\item $\ell\leq s\leq n$,
		\item $s\leq k$.
		\end{enumerate}
	\label{prop:conditionsOfDescentCompositionsAndPartitions}
\end{prop}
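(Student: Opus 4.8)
The plan is to reduce every item to a single standard tableau. Suppose $\alpha=(\alpha_1,\dots,\alpha_s)$ is the descent composition of some tableau $T\in B(\lambda)_n$, and let $P=std(T)$ be its standardization: a standard tableau of the same shape $\lambda$, with the same minimal parsing, hence with descent composition $\alpha$, so that its descent set is $\{\alpha_1,\ \alpha_1+\alpha_2,\ \dots,\ \alpha_1+\dots+\alpha_{s-1}\}$ (this is exactly the content recalled around Proposition~\ref{prop:bijTabStdTabBandesMax}). The one elementary fact I would invoke repeatedly is that a horizontal band occupies pairwise distinct columns, of which $\lambda$ has exactly $\lambda_1$; in particular the $i$th maximal horizontal band of $P$ consists of the cells with entries between the $(i-1)$st and $i$th descents.

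For (1): the $i$th maximal horizontal band of $P$ is nonempty, so $\alpha_i\ge 1$, and it lies in distinct columns, so $\alpha_i\le\lambda_1$. For (2): the cells of $P$ with entry at most $\alpha_1+\dots+\alpha_j$ form a Young subdiagram $\mu\subseteq\lambda$ of size $\alpha_1+\dots+\alpha_j$, which is precisely the union of the first $j$ bands; since each band meets any fixed column at most once, every column of $\mu$ has at most $j$ cells, so $\ell(\mu)\le j$, and therefore, using $\mu\subseteq\lambda$ and $\ell(\mu)\le j$, $\alpha_1+\dots+\alpha_j=|\mu|\le\lambda_1+\dots+\lambda_j$. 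For (3): each descent $i$ of $P$ has the entry $i+1$ strictly below the entry $i$, hence outside row $1$, and $i\mapsto(\text{the cell of }i+1)$ is an injection of the $s-1$ descents into the $m-\lambda_1=\lambda_2+\dots+\lambda_\ell$ cells not in the first row, giving $s-1\le\lambda_2+\dots+\lambda_\ell$. For the lower bound in (4): the $\ell$ cells of the first column of $P$ carry $\ell$ distinct entries and, each band meeting that column at most once, lie in $\ell$ distinct bands, whence $\ell\le s$. For the upper bound in (4) and for (5): the $s$ maximal horizontal bands of $T$ use pairwise disjoint nonempty blocks of consecutive entry values, so $T$ has at least $s$ distinct entries; since these all lie in $\{1,\dots,n\}$ we get $s\le n$, and if $k$ denotes the number of distinct entries appearing in $T$ then $s\le k$.

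Each step is short, so I do not expect a serious obstacle; the only point needing genuine care is (2), where one must verify not just that the low-entry cells of $P$ form a subdiagram of $\lambda$ (which is automatic) but that its number of rows is controlled, and it is exactly the column-disjointness of the bands that converts a bound on the number of bands used into the same bound on the number of rows. A secondary bookkeeping point is the reduction at the start: one should confirm that standardization preserves both the shape and the minimal parsing, and that the $n$-dependent bound $s\le n$ is argued through the distinct entries of $T$ itself rather than through $T_\alpha$, since membership of $T_\alpha$ in $B(\lambda)_n$ is only clear once $s\le n$ is known. The remaining items are one-line injection or counting arguments about the $s$ horizontal bands, and none of them involves $n$.
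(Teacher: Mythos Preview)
Your proof is correct and follows essentially the same approach as the paper: both reduce to properties of the maximal horizontal bands (equivalently, of the standardized tableau) and use column-disjointness as the key constraint. Your argument for (3) via the injection $i\mapsto(\text{cell of }i{+}1)$ is arguably cleaner than the paper's, and your remark about proving $s\le n$ directly from $T$ rather than from $T_\alpha$ avoids a small circularity that the paper sidesteps only implicitly (via Proposition~\ref{thm:SourcesQuasiSym}).

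One discrepancy: in item (5) the symbol $k$ is not defined in the statement, and the paper's own proof reads ``there cannot be more non-zero parts to the weight of a tableau than the number of cells in it,'' indicating $k=m=|\lambda|$ rather than the number of distinct entries of $T$. Your interpretation gives a sharper (and still correct) bound, but it is not what the paper intends, so you may wish to flag the ambiguity or prove both.
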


\begin{proof}
\begin{enumerate}[itemsep=0ex]
\item The $\alpha_i$'s describe lengths of maximal horizontal bands, and each has at most one cell in every column of $\lambda$. Then $\alpha_i\leq \lambda_1$, since $\lambda_1$ is the number of columns spanned by $\lambda$. Moreover, by the maximality of (maximal) horizontal bands, $\alpha_i\geq 1$.

\item In order to be maximal, the horizontal band of $i$'s must have its tail on a row of larger index than that on which lies the head of the horizontal band of $(i+1)$'s. Then the $j$ first horizontal bands span at most $j$ rows and their cells.

\item The first horizontal band is necessarily of shape $(\alpha_1)$. This gives us the $+1$. The maximal number of horizontal bands possible occurs if every new horizontal band has a single cell on the next non-fully filled row, and the rest of its cells on the rows above.

\item The first inequality follows from the discussion on the second condition. The second inequality follows from the fact that the composition $\alpha$ also gives the weight of a tableau $T_\alpha$ of shape $\lambda$ by the above proposition. Then $s\leq n$, since $n$ is the maximal entry allowed in tableaux of $B(\lambda)_n$, as a crystal of $GL_n$, and $\alpha_s$ indicates the number of entries $s$ in a source $T_\alpha$.

\item There cannot be more non-zero parts to the weight of a tableau than the number of cells in it.\qedhere
\end{enumerate}


%
\end{proof}

\begin{rem}
	For $\lambda\vdash m$ fixed, not all the compositions of $m$ with the properties above are descent composition for $\lambda$.
	For example, for $\lambda=(3,3)$, $\alpha=(1,2,3)$ has all the properties above, however, there are no semistandard tableau of shape $\lambda$ with minimal parsing $\alpha$.  
	
\end{rem}

\begin{conj}
	Let $\lambda$ be the weakly decreasing reordering of the parts of $\alpha$. Then $F_\alpha$ occurs in $s_\lambda$. In particular, all reoderings $\alpha$ of $\lambda$ appear as descent compositions for $\lambda$, with $F_\alpha$ in $s_\lambda$.
\end{conj}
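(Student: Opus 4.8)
The plan is to reduce, via Gessel's formula~(\ref{eq:Gessel}) and Proposition~\ref{prop:bijTabStdTabBandesMax}, to producing a single semistandard tableau, and then to notice that the hypothesis $\lambda=\mathrm{sort}(\alpha)$ (the weakly decreasing reordering) forces exactly the band structure that is needed.

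\textbf{Step 1 (reduction).} Since $\{F_\alpha\}$ is a basis of $QSym$ and $s_\lambda=\sum_{T\in\SYT(\lambda)}F_{DesComp(T)}$ by~(\ref{eq:Gessel}), the function $F_\alpha$ occurs in $s_\lambda$ exactly when $f^\lambda_\alpha\geq1$, i.e. when some standard tableau of shape $\lambda$ has descent composition $\alpha$. By Proposition~\ref{prop:bijTabStdTabBandesMax} (for the fixed shape $\lambda$), this is equivalent to the existence of a semistandard tableau of shape $\lambda$, weight $\alpha$, and minimal parsing of type $\alpha$. So it suffices to exhibit one such tableau.

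\textbf{Step 2 (the key observation).} Set $\ell=\ell(\lambda)$. As $\alpha$ rearranges the parts of $\lambda$, it is a composition of $m=|\lambda|$ with exactly $\ell$ positive parts. I claim that \emph{every} semistandard tableau $T$ of shape $\lambda$ and weight $\alpha$ already has minimal parsing of type $\alpha$. Indeed, the entries of $T$ are exactly $1,\dots,\ell$, each occurring at least once; column~$1$ of $T$ has $\ell$ cells whose entries strictly increase and lie in $\{1,\dots,\ell\}$, so column~$1$ reads $1,2,\dots,\ell$ from top to bottom. Hence for each $i<\ell$ the cells $(i,1)$ and $(i+1,1)$ carry $i$ and $i+1$; the band of $i$'s together with the band of $(i+1)$'s then has two cells in column~$1$, is not a horizontal band, and so the band of $i$'s is already maximal. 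Therefore the minimal parsing of $T$ is the sequence of its individual bands, of lengths $(\alpha_1,\dots,\alpha_\ell)=\alpha$.

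\textbf{Step 3 (existence and conclusion).} It remains to know that some semistandard tableau of shape $\lambda$ and weight $\alpha$ exists, i.e. that $K^\lambda_\alpha\geq1$. But $K^\lambda_\alpha$ depends only on the multiset of parts of $\alpha$, since the coefficient of $x^\beta$ in the symmetric function $s_\lambda$ is unchanged under permuting $\beta$; thus $K^\lambda_\alpha=K^\lambda_{\mathrm{sort}(\alpha)}=K^\lambda_\lambda=1$. Combined with Step~2 this even gives $f^\lambda_\alpha=1$: so $F_\alpha$ occurs in $s_\lambda=s_{\mathrm{sort}(\alpha)}$, with multiplicity exactly one, and in particular $\alpha$ is a descent composition for $\lambda$, which is the assertion.

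\textbf{Main obstacle and remarks.} The only point of substance is the staircase-forcing of column~$1$ in Step~2; Steps~1 and~3 are bookkeeping with results already in the paper together with classical facts about Kostka numbers. One could instead build the standard tableau directly, by standardizing the unique SSYT of shape $\lambda$ and weight $\alpha$, but the present route is shorter and yields the multiplicity for free. Finally, the hypothesis $\lambda=\mathrm{sort}(\alpha)$ is essential and is used precisely in Step~2: for a strictly larger shape the first column need not be a staircase, and an SSYT of that shape and weight $\alpha$ may fail to have minimal parsing of type $\alpha$ — or, as in the Remark following Proposition~\ref{prop:conditionsOfDescentCompositionsAndPartitions}, may not exist at all.
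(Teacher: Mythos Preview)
Your proof is correct, and in fact the paper does not prove this statement at all: it is recorded there as a Conjecture, with the remark that it ``has been tested, and found to be true, for all compositions of $m$, until $m=6$.'' So there is no paper proof to compare against; you have supplied one.

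The key insight, which the paper apparently overlooked, is your Step~2: when the weight $\alpha$ is a rearrangement of the shape $\lambda$, the entries of any SSYT of that shape and weight are forced to be exactly $1,\dots,\ell(\lambda)$, whence the first column is rigidly $1,2,\dots,\ell(\lambda)$, and this alone forces every horizontal band of $i$'s to be maximal. Combined with the classical facts $K^\lambda_\alpha=K^\lambda_{\mathrm{sort}(\alpha)}=K^\lambda_\lambda=1$, this yields not only $f^\lambda_\alpha\geq 1$ but the sharper $f^\lambda_\alpha=1$. Your remarks at the end are also apt: the hypothesis $\lambda=\mathrm{sort}(\alpha)$ is used in an essential way, precisely to pin down the first column.
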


This has been tested, and found to be true, for all compositions of $m$, untill $m=6$.

\section{Quasicrystal structure of a subcomponent associated to a fundamental quasisymmetric function $F_\alpha$}
\label{section:Subcrystals}

We have seen in theorem~\ref{theo:decompBlambdaIntoSubcomponents} that subcomponents $B(T_\alpha)$ associated to a fundamental quasisymmetric function $F_\alpha$ form induced subgraphs of crystals. This notion of induced subgraph of crystal has not been studied by the mathematical community to our knowledge.\\

We push this further by studying the structure of these subcomponents. In this section, we prove that all subcomponents associated to a composition $\alpha$ are isomorphic, no matter their crystal host $B(\lambda)_n$. We denote that class of subcomponents by $B(\alpha)_n$, and call them \emph{quasicrystals}.\\
 
The quasicrystals $B(\alpha)_n$ are not crystals of type $A_{n-1}$, one reason being that they are not self-dual in general. They may however be Kashiwara crystals for other groups. It would be interesting to investigate which groups (and the associated representations) might have such a crystal structure. 


\subsection{Oriented graph structure of quasicrystals $B(\alpha)_n$}

\begin{theo*}
	Let $\alpha$ be a composition of $m$ in $s$ parts. The quasicrystal $B(\alpha)_n$ is isomorphic (as an oriented graph) to $B(m)$ with maximal entry $n-s+1$. In particular, the oriented graph structure of $B(\alpha)_n$ is independant of partitions $\lambda$ for which $\alpha$ is a descent composition.
	\label{theo:IsomWithBm}
\end{theo*}

In order to prove this theorem, we need to introduce the original definition of quasisymmetric functions of Gessel \cite{Gessel}, which is in terms of subsets $I\subseteq\{1,2,\ldots,m-1\}$. To do this, we use the bijection between descent sets $I$ and descent compositions $\alpha$ defined before proposition~\ref{prop:bijTabStdTabBandesMax}: if $\alpha = (\alpha_1,\ldots,\alpha_s)$, then the associated set is $I_\alpha=\{j_1,j_2,\ldots,j_{s-1}\}$ where $j_i=\alpha_1+\ldots+\alpha_i$. Then
%
\[F_\alpha(x_1,\ldots,x_n) = F_{I_\alpha} (x_1,\ldots,x_n) = \sum_{\substack{1\leq i_i\leq i_2\leq \ldots\leq i_m\leq n \\ \text{ with } i_j<i_{j+1} \text{ if } j\in I_\alpha}} x_{i_1} x_{i_2} \ldots  x_{i_m}.\]

This definition is more generally used in the literature as the one given in the introduction. 

\begin{proof}[Proof of theorem~\ref{theo:IsomWithBm}]
	A subcomponent $B(T_\alpha)$ in any crystal $B(\lambda)_n$ corresponds to the quasisymmetric function $F_\alpha(x_1,\ldots,x_n)$, since we restrict ourselves to a maximal entry $n$. The tableaux in the crystal $B(m)_{n-s+1}$ have a unique descent composition, $\alpha=(m)$. Then, the whole crystal corresponds to the quasisymmetric function $F_{(m)}(x_1,\ldots,x_{n-s+1})$. Lets start by showing that the sets of monomials associated to both quasisymmetric functions above are in bijection. To do this, we use the original definition of quasisymmetric functions.\\
	
	Let's note that, for $1\leq s\leq n$, \[F_{(m)}(x_1,\ldots,x_{n-s+1}) = F_{\varnothing} (x_1,\ldots,x_{n-s+1}) = \sum_{1\leq i_i\leq i_2\leq \ldots\leq i_m\leq n-s+1 } x_{i_1} x_{i_2} \ldots  x_{i_m}.\]
	The weakly increasing sequence of integers which index each monomial of $F_{\varnothing}(x_1,\ldots,x_{n-s+1})$ is in bijection with the weakly increasing sequence of integers indexing the monomials of any $F_{I}$, with $I=\{j_1,\ldots,j_{s-1}\}$, through the following bijection. 
	
	\begin{center}
		$1\leq i_i\leq i_2\leq \ldots\leq i_m\leq n-s+1 $ \\
		$\downarrow$\\
		$1\leq i_i\leq \ldots \leq i_{j_1} < i_{j_1+1}+1\leq \ldots \leq i_{j_2}+1 < i_{j_2+1}+2 \leq \ldots \leq  i_{j_k}+(k-1) < i_{j_k+1}+k \leq \ldots \leq i_{j_{s-1}}+(s-2) < i_{j_{s-1}+1} + (s-1) \leq \ldots \leq i_m +(s-1) \leq (n-s+1)+(s-1) =n. $
	\end{center}
	
	All possible sequences of indices in $f_I$ can be retreived this way, and the strict ascents will be respected. One can find the initial sequence by removing $i$ from each part between the $i^{\text{th}}$ and the $i+1^{\text{th}}$ increasing sign $<$. Similarly as before, all sequences of $F_\varnothing$ can be retreived this way.\\
	
	
	Lets now fix $n,m$, $1\leq s\leq n$, a composition $\alpha$ of $m$ in $s$ parts, and any shape $\lambda\vdash m$ in which $\alpha$ appears as a descent composition. 
	The sequences of indices above then give fillings of the maximal horizontal bands of type $\alpha$ in $\lambda$: indices $i_j$ indexed by $1 \leq j \leq j_1$ fill the first horizontal band, indices $i_j$ indexed by $j_1+1 \leq j \leq j_2$ fill the second horizontal band, etc.\\
	
	Let us replace the tableaux in the crystal $B(m)_{n-s+1}$ by the corresponding tableaux 
	according to the above bijection. Then the first tableau has weight and minimal parsing of type $\alpha$, i.e. if indexing the integers by their position in the sequence, we get \[1\leq 1_1\leq \ldots \leq 1_{j_1} < 2_{j_1+1} \leq \ldots \leq 2_{j_2}<3_{j_2+1}\leq \ldots \leq k_{j_{k}}<(k+1)_{j_{k}+1} \leq \ldots\leq s_{m}\leq n.\]
	
	Since the indices $i_j$ correspond to entries in a tableau, we can consider how crystal operators act on such entries. We are restricting ourselves to crystal operators which preserve the parsing, so they may only be applied to the indices $i_j$ if the weak order described above is preserved. In particular, the crystal operators which may be applied on the sequence, without breaking its weak order, are $f_{i_{j_k+\ell}+k}$ 
	if $f_{i_{j_k+\ell}}$ can be applied to the corresponding tableau 
	in $B(m)_{n-s+1}$.
	Then the structure of the quasicrystal $B(\alpha)$ will be exactly that of $B(m)_{n-s+1}$, 
	with some relabelled oriented edges.\\
	
	Finally, this is independent of $\lambda$, since only the weakly increasing sequence is important in the above isomorphism.
\end{proof}

\begin{cor}
	Let $n,m\in\N$ be fixed, and consider any composition $\alpha$ of $m$ in $s\leq n$ parts. Then the number of monic monomials in $F_\alpha(x_1,\ldots, x_n)$ is equal to the number of monic monomials in $F_{(m)}(x_1,\ldots,x_{n-s+1})$.
\end{cor}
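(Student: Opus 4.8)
The plan is to deduce the corollary directly from Theorem~\ref{theo:IsomWithBm}, once we know that the monic monomials of a fundamental quasisymmetric function $F_\alpha(x_1,\ldots,x_n)$ are counted by the vertices of the associated quasicrystal $B(\alpha)_n$.

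First I would record a small lemma: $F_\alpha$ is multiplicity-free in the monomial basis. Indeed $F_\alpha=\sum_{\alpha\preccurlyeq\beta}M_\beta$, each $M_\beta$ is a sum of distinct monomials, and for $\beta\neq\beta'$ the monomials of $M_\beta$ and $M_{\beta'}$ are disjoint, since reading off the nonzero exponents of a monomial in increasing order of variable index recovers the composition $\beta$. Consequently the number of monic monomials of $F_\alpha(x_1,\ldots,x_n)$ equals the number of sequences $1\le i_1\le\cdots\le i_m\le n$ with $i_j<i_{j+1}$ for $j\in I_\alpha$ that index $F_\alpha=F_{I_\alpha}$, distinct such sequences producing distinct monomials because sorting the index multiset of a monomial recovers its sequence. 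The same remark applied to $F_{(m)}(x_1,\ldots,x_{n-s+1})=F_\varnothing(x_1,\ldots,x_{n-s+1})$ shows its number of monic monomials equals the number of weakly increasing sequences of length $m$ with entries in $\{1,\dots,n-s+1\}$.

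Then I would invoke the explicit bijection constructed in the proof of Theorem~\ref{theo:IsomWithBm} between the admissible index sequences of $F_{I_\alpha}(x_1,\ldots,x_n)$ and the weakly increasing sequences of length $m$ bounded by $n-s+1$ (subtract $k-1$ from the block of entries lying between the $k$-th and $(k+1)$-th forced strict ascent). Together with the previous paragraph this gives the equality of the two monomial counts. Equivalently — and this is the framing I would spell out, since the corollary sits right after the theorem — one argues with crystals: by Proposition~\ref{thm:SourcesQuasiSym} and the injectivity of the weight map on $B(T_\alpha)$ (which is precisely the content of that bijection) the vertices of $B(\alpha)_n$ biject with the monic monomials of $F_\alpha(x_1,\ldots,x_n)$; the vertices of $B(m)_{n-s+1}$ are the one-row tableaux with entries $\le n-s+1$, hence biject with the monic monomials of $F_{(m)}(x_1,\ldots,x_{n-s+1})$; and Theorem~\ref{theo:IsomWithBm} identifies these two vertex sets, giving the result.

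Since this is an immediate corollary, I expect no genuine obstacle. The one point to state carefully is that the correspondence between monic monomials and index sequences (equivalently, quasicrystal vertices) is a bijection, not merely a surjection — which is exactly the multiplicity-freeness noted above. As a sanity check one can make both counts explicit, each equal to $\binom{m+n-s}{m}$.
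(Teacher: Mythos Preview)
Your proposal is correct and follows the same approach as the paper: the corollary is stated without proof in the paper because the bijection between the monomial index sequences of $F_{I_\alpha}(x_1,\ldots,x_n)$ and those of $F_\varnothing(x_1,\ldots,x_{n-s+1})$ is already established explicitly in the proof of Theorem~\ref{theo:IsomWithBm}. Your additional remark that $F_\alpha$ is multiplicity-free in the monomial basis is a useful point of rigor that the paper leaves implicit.
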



\begin{cor}
	For $n$ fixed, and a fixed composition $\alpha$, all subcomponents $B(T_\alpha)_n$ are isomorphic as labelled oriented graphs, no matter the crystal $B(\lambda)_n$ they live in.
	\label{prop:GraphStructureSubcomponents}
\end{cor}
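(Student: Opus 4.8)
The plan is to derive this corollary directly from Theorem~\ref{theo:IsomWithBm}, which already does the heavy lifting. That theorem tells us that any subcomponent $B(T_\alpha)_n$ sitting inside any $B(\lambda)_n$ is isomorphic, as an oriented graph, to $B(m)_{n-s+1}$, where $m = |\alpha|$ and $s = \ell(\alpha)$. Since $m$, $s$, and $n$ depend only on $\alpha$ and the ambient parameter $n$, and not on $\lambda$, the target graph $B(m)_{n-s+1}$ is literally the same for every host crystal. Hence any two subcomponents $B(T_\alpha)_n$ and $B(T'_\alpha)_n$ (with $T_\alpha$ of shape $\lambda$ and $T'_\alpha$ of shape $\mu$, say) are each isomorphic as oriented graphs to $B(m)_{n-s+1}$, and composing one isomorphism with the inverse of the other gives an oriented-graph isomorphism between them.

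The only thing Theorem~\ref{theo:IsomWithBm} does not explicitly assert is that the isomorphism respects \emph{edge labels}, i.e.\ that the subcomponents are isomorphic as \emph{labelled} oriented graphs. So the second and final step is to check the labelling. Here I would appeal to the explicit description of the isomorphism given in the proof of Theorem~\ref{theo:IsomWithBm}: the edge of $B(m)_{n-s+1}$ labelled $f_i$ corresponds to the edge of $B(T_\alpha)_n$ obtained by applying the parsing-preserving operator $f_{i+k}$, where $k$ is determined by which block of the composition $\alpha$ the relevant entry lies in — and crucially $k$ is a function of $i$ and $\alpha$ only, not of $\lambda$. Consequently the relabelling rule $f_i \mapsto f_{i+k(i,\alpha)}$ is the same for every host crystal $B(\lambda)_n$. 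Therefore the bijection between $B(T_\alpha)_n$ and $B(T'_\alpha)_n$ obtained by going through $B(m)_{n-s+1}$ sends an edge labelled $j$ to an edge labelled $j$: it is an isomorphism of labelled oriented graphs.

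I would phrase the write-up as: fix $n$ and $\alpha$, set $m = |\alpha|$ and $s = \ell(\alpha)$; let $B(T_\alpha)_n \subseteq B(\lambda)_n$ and $B(T'_\alpha)_n \subseteq B(\mu)_n$ be two subcomponents of type $\alpha$. By Theorem~\ref{theo:IsomWithBm} both are isomorphic as oriented graphs to $B(m)_{n-s+1}$, and by the explicit form of that isomorphism the label-$j$ edges of each correspond to the label-$(j + k)$ edges coming from $B(m)_{n-s+1}$ under one and the same relabelling depending only on $\alpha$; composing gives the desired label-preserving isomorphism. I do not expect any genuine obstacle here — the statement is essentially a bookkeeping consequence of the previous theorem — so the only mild care needed is to make sure the labelled (not merely unlabelled) version of the isomorphism in Theorem~\ref{theo:IsomWithBm} is invoked, which the proof of that theorem does supply.
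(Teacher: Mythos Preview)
Your proposal is correct and follows essentially the same approach as the paper: invoke Theorem~\ref{theo:IsomWithBm} to get an oriented-graph isomorphism to $B(m)_{n-s+1}$ independent of $\lambda$, then observe that the edge-relabelling in that isomorphism depends only on $\alpha$, so composing through $B(m)_{n-s+1}$ gives a label-preserving isomorphism between any two subcomponents of type $\alpha$. The paper's proof is slightly terser but makes exactly the same two points.
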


\begin{proof}
	We have seen that $B(\alpha)$ is isomorphic to $B(m)_{n-s+1}$, where $n$ is the fixed maximal entry in the tableaux of $B(\alpha)$, $m=|\alpha|$ and $s=\ell(\alpha)$. Moreover, the isomorphism seen in the proof above does not depend on the shape $\lambda$ of tableaux, and the modifications of the labels only depend on $\alpha$. Therefore, a crystal operator can be applied on all tableaux in a given position in different $B(\alpha)$'s, no matter their shapes.
\end{proof}

In other words, it is justified to study graphs (or quasicrystals) associated to quasisymmetric functions $F_\alpha$, as their oriented graph structure is determined for any fixed $n$. We could also use a notation $B(m,n,s)$ as only these values are important in defining the oriented graph structure: relabellings of crystals $B(m)$ with maximal entry $n-s+1$. In particular, all $B(\alpha)_n$ for any composition of $m$ in $s\leq n$ parts will have the same oriented graph structure: that of $B(m)_{n-s+1}$. \\



\begin{rem}
	We can consider how the crystal operators will act on such weakly increasing sequences. For one thing, a crystal operator $f_i$ will act on the rightmost $i$ in the sequence, as long as it does not break the increasing sequences: the entries $i$ can only appear in one horizontal band at the time, in order to preserve the strict increasingness of the sequence, and the rightmost will correspond to the $NE$ most entry $i$ in the tableau. 
	This agrees with the parenthesis rule. 
\end{rem}


\subsection{Height of quasicrystals, sources and sinks}
\begin{cor}
	The quasicrystals $B(\alpha)_n$ have height, or length of their maximal subchain, $m\cdot (n-s) +1$, where $n$ is the fixed maximal entry, $s=\ell(\alpha)$ and $|\alpha| = m$.
\end{cor}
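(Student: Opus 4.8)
The plan is to transport the statement to the single-row crystal via Theorem~\ref{theo:IsomWithBm} and then read off the answer from a natural grading. The length of a longest directed path (equivalently, the longest chain) depends only on the underlying oriented graph, and by Theorem~\ref{theo:IsomWithBm} the quasicrystal $B(\alpha)_n$ is isomorphic as an oriented graph to $B(m)$ with maximal entry $p:=n-s+1$. So the first step is to reduce the problem to computing the height of $B(m)_{p}$.

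Next I would set up the computation inside $B(m)_p$. Its vertices are the tableaux of shape $(m)$ with entries in $[p]$, i.e. weakly increasing words $w=w_1\le w_2\le\cdots\le w_m$ on $[p]$, and each edge is an application of some $f_i$, which turns a single entry equal to $i$ into an $i+1$. Introduce the statistic $N(w)=\sum_{j=1}^m(w_j-1)$. Every edge raises $N$ by exactly $1$, so a directed path from $u$ to $v$ has length exactly $N(v)-N(u)$; in particular every directed path has length at most $\max N-\min N$. Since $N=0$ only at the source $1_{(m)}=1\cdots1$ and $N=m(p-1)$ only at the sink $p\cdots p$, every directed path in $B(m)_p$ has at most $m(p-1)+1$ vertices.

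Then I would check that this bound is attained. Because $B(m)$ is a connected crystal of type $A_{p-1}$ with a unique sink, every vertex $w\ne p\cdots p$ admits some $f_i$ acting nontrivially on it; hence starting from the source and applying such operators repeatedly produces a directed path that can only stop once it reaches the sink, and this path has length exactly $m(p-1)$. Substituting $p-1=n-s$ shows the maximal subchain of $B(\alpha)_n$ has $m\cdot(n-s)+1$ vertices, as claimed. If one prefers to avoid invoking connectedness, the same path can be written down explicitly: from $1\cdots1$ change the last entry successively $1\to2\to\cdots\to p$, then the second-to-last entry $1\to\cdots\to p$, and so on; each step is a legal $f_i$ by the parenthesis rule, and there are $m(p-1)$ of them.

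The only mildly delicate point is this last one — exhibiting an actual longest chain rather than merely bounding chain lengths — and it is handled either by connectedness of $B(m)$ together with uniqueness of its sink, or by the explicit construction above. Everything preceding it is a one-line computation with the grading $N$, so I do not anticipate any real obstacle.
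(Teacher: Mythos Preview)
Your argument is correct, but it proceeds differently from the paper's. The paper does not invoke Theorem~\ref{theo:IsomWithBm}; instead it works directly inside a subcomponent $B(T_\alpha)$ of $B(\lambda)_n$ and writes down an explicit sequence of $|\alpha|\cdot(n-s)$ crystal operators that pushes each maximal horizontal band, one band at a time from last to first, from its starting values up to the largest values compatible with the parsing. This exhibits a concrete source-to-sink chain and, as a byproduct, identifies the sink of $B(T_\alpha)$ (entries $i$ replaced by $n-s+i$), which the paper records as the next corollary. Maximality is then argued via the row-grading of $B(\lambda)$.

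Your route---transport to $B(m)_{n-s+1}$ and use the statistic $N(w)=\sum_j(w_j-1)$---is cleaner for the length computation itself: the grading immediately gives both the upper bound and that every source-to-sink path has exactly $m(n-s)$ edges. What you give up is the explicit description of the chain and of the sink in the original tableau model, which the paper's hands-on approach supplies for free. Either approach is perfectly adequate for the corollary as stated.
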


\begin{proof}
	The subcomponents $B(T_\alpha)$ in any crystal $B(\lambda)$ contain the chain of tableaux with transformations given by $(f_{n-s}^{\alpha_1} \circ \ldots \circ f_{2}^{\alpha_1} \circ f_{1}^{\alpha_1})
	\circ \ldots \circ 
	(f_{n-2}^{\alpha_{s-1}} \circ \ldots \circ f_{s}^{\alpha_{s-1}} \circ f_{s-1}^{\alpha_{s-1}}) \circ 
	(f_{n-1}^{\alpha_s} \circ \ldots \circ f_{s+1}^{\alpha_s} \circ f_s^{\alpha_s})$, which modifies maximally one horizontal band at a time, from its head to its tail: all entries $s$ are changed to $s+1$'s, then into $s+2$'s, etc. until they are all changed into $n$'s. Then all entries $s-1$ are changed into $s$'s, then into $S+1$'s, etc. until they have all been changed into $n-1$'s. This process is continued until all entries $1$ 
	have been changed into $n-s$'s and no more transformations can be applied without modifying the minimal parsing into horizontal bands.\\
	
	This sequence of transformations preserves the parsing into horizontal bands, so we remain always in the same subcomponent $B(T_\alpha)$. Moreover, no crystal operator can be applied to the final tableau of the chain without coming out of the subcomponent. We have then obtained, and described, the sink of the subcomponent $B(T_\alpha)$: preserving the same minimal parsing as the source, entries $i$ are replaced by $n-s+i$. This tableau has weight $(0^{n-s},\alpha)$.\\
	
	There are $|\alpha|\cdot (n-s)$ crystal operators in this sequence of transformations, and it describes a chain of length $|\alpha|\cdot (n-s) + 1$ in any subcomponent $B(T_\alpha)_n$, when adding the source to which the crystal operators are applied. Since the chain starts from the source and ends at the sink, it is maximal, since $f_i$ has a unique image going down each row of $B(\lambda)$.
	Then, any maximal chain in a quasicrystal $B(\alpha)$ will have the same length: $|\alpha|\cdot (n-s) + 1$.
	Since the oriented graph structure of $B(\alpha)$ is only determined by $n,m=|\alpha|$ and $s$, then the height $|\alpha|\cdot(n-s)+1=m\cdot(n-s)+1$, also only depends on $n,m,s$.
\end{proof}

\begin{cor}
	The sink of a subcomponent $B(T_\alpha)$ is obtained from its source $T_\alpha$ by replacing entries $i$ by $n-s+i$, where $s$ is the length of $\alpha$.
\end{cor}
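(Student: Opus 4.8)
The statement is in fact already contained in the proof of the preceding corollary, where the end of the exhibited maximal chain was identified as ``the sink''; the plan is simply to isolate that observation and to check that the tableau produced there is genuinely \emph{the} sink rather than merely \emph{a} terminal vertex of one particular chain.

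First I would set $U$ to be the tableau obtained from $T_\alpha$ by replacing each entry $i$ by $n-s+i$. This is a uniform shift of all entries by $n-s$, and since $T_\alpha$ has maximal entry $s$, the tableau $U$ has maximal entry $n$; it has the same shape $\lambda$, and its decomposition into maximal horizontal bands is the image under this shift of the decomposition of $T_\alpha$. Hence $U$ has minimal parsing of type $\alpha$, lies in the induced subgraph $B(T_\alpha)$, and has weight $(0^{\,n-s},\alpha)$. Moreover the sequence of crystal operators written out in the proof of the preceding corollary sends $T_\alpha$ to $U$ while remaining inside $B(T_\alpha)$, so $U$ is reachable from the source.

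Next I would check that $U$ is a sink of $B(T_\alpha)$, i.e. that no crystal operator preserving the minimal parsing is defined on it. The entries of $U$ all lie in $\{n-s+1,\dots,n\}$, so the only $f_j$ acting nontrivially on $U$ (as a plain tableau) are those with $n-s+1\le j\le n-1$, say $j=n-s+i$ with $1\le i\le s-1$; such an $f_j$ turns the head of the $i$-th maximal band into an entry of the $(i+1)$-st band, and by maximality of the horizontal bands of $U$ this changes the type of the minimal parsing — this is exactly the remark made at the close of the proof of the preceding corollary. Therefore $U$ admits no parsing-preserving crystal operator and is a sink.

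Finally I would invoke uniqueness: by Theorem~\ref{theo:IsomWithBm} the oriented graph $B(\alpha)_n$ is isomorphic to $B(m)$ with maximal entry $n-s+1$, which is a connected crystal (of type $A_{n-s}$) and hence has exactly one sink; consequently $B(T_\alpha)$ has exactly one sink, and it must be $U$. The only point needing a little care — the main, and minor, obstacle — is the verification that each applicable $f_j$ truly destroys the minimal parsing of $U$: this follows from maximality of the bands (the NE-most $j$ in $U$ is the head of a maximal band, so promoting it to $j+1$ yields a band configuration of a different type), and it is the one step where one argues directly with the horizontal-band structure rather than quoting an earlier result wholesale.
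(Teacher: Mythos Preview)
Your proposal is correct and follows essentially the same route as the paper: the corollary is stated there without a separate proof, since the description of the sink (entries $i$ replaced by $n-s+i$, weight $(0^{n-s},\alpha)$) was already extracted in the proof of the preceding corollary on the height of $B(\alpha)_n$. Your write-up is in fact more careful than the paper's, as you explicitly invoke Theorem~\ref{theo:IsomWithBm} to justify uniqueness of the sink, whereas the paper leaves this implicit.

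One small remark on the ``only step needing care'': rather than arguing that $f_j$ acts on the head of the $i$-th band (which requires a parenthesis-rule computation), you can argue purely with weights. If $f_j$ is non-null on $U$ with $j=n-s+i$, the resulting weight is $(0^{n-s},\alpha_1,\dots,\alpha_i-1,\alpha_{i+1}+1,\dots,\alpha_s)$, and the partial sum $\alpha_1+\cdots+\alpha_i$ is absent from the partial sums of this new weight, so $\alpha$ cannot be a coarsening of it; hence $f_j(U)\notin B(T_\alpha)$. This avoids any direct appeal to which cell $f_j$ modifies.
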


\subsection{Number of semistandard tableaux of shape $\lambda$ and maximal entry $n$, and Kostka numbers}

Using the results above, we give a formula for computing the number of tableaux of a given shape $\lambda$ and maximal possible entry $n$. In other words, we count the number of tableaux in $B(\lambda)_n$, which is equal to the number of monic monomials in $s_\lambda(x_1,x_2,\ldots,x_n)$.\\

We have seen that $s_\lambda = \sum_{T\in SYT(\lambda)} F_{DesComp(T)}$, and that each quasisymmetric function $F(\alpha)$ corresponds to a subcomponent $B(T_\alpha)$ in the crystal $B(\lambda)_n$, whose vertices are all tableaux of shape $\lambda$ and maximal entry $n$.\\

The number of standard tableaux of shape $\lambda$ is well known to be given by the hook-length formula, and are generally not too hard to enumerate. We have seen that all $B(\alpha)_n$ for compositions $\alpha$ of $m$ with the same number $s\leq n$ of parts are isomorphic to $B(m)_{n-s+1}$, and that these subcomponents of $B(\lambda)_n$ are counted by the number of standard tableaux with $d=s-1$ descents. It would then suffice to have formulas for the number of tableaux in $B(m)_{n-d}$ and for the number of standard tableaux of shape $\lambda$ with $d$ descent to give a formula for the number of semistandard tableaux of shape $\lambda$.

Using the results above allows us to do this.

\begin{prop}
	The number of tableaux in $B(m)_{k}$ is equal to the multiset coefficient and binomial coefficient below.
	\[
	\left(\left(\begin{array}{c} m+1 \\ k-1 \end{array}\right)\right) = \left(\begin{array}{c} m+k-1 \\ k-1 \end{array}\right) = \frac{(m+1) \cdot (m+2)\cdot \ldots \cdot (m+k-1)}{(k-1)!}.
	\]
	\label{prop:NumberOfTableauxInBm}
\end{prop}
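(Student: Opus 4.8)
The plan is to count the tableaux in $B(m)_k$ directly, since $B(m)$ is the crystal of the single-row Schur function $s_{(m)}$, whose tableaux are exactly the weakly increasing sequences of length $m$ on the alphabet $\{1,2,\ldots,k\}$. That is, the vertices of $B(m)_k$ are in bijection with sequences $1\leq i_1\leq i_2\leq \cdots \leq i_m\leq k$. So I would first observe that $|B(m)_k|$ equals the number of multisets of size $m$ drawn from a $k$-element set, which is by definition the multiset coefficient $\left(\!\!\binom{k}{m}\!\!\right)=\binom{m+k-1}{m}$. The only remaining task is to massage this into the form stated in the proposition, namely $\left(\!\!\binom{m+1}{k-1}\!\!\right)=\binom{m+k-1}{k-1}$.

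For that final rewriting I would use two elementary identities: first, the symmetry $\binom{m+k-1}{m}=\binom{m+k-1}{(m+k-1)-m}=\binom{m+k-1}{k-1}$; and second, the standard conversion between binomial and multiset coefficients, $\left(\!\!\binom{m+1}{k-1}\!\!\right)=\binom{(m+1)+(k-1)-1}{k-1}=\binom{m+k-1}{k-1}$, which matches. Finally, expanding $\binom{m+k-1}{k-1}=\frac{(m+k-1)!}{(m)!\,(k-1)!}=\frac{(m+1)(m+2)\cdots(m+k-1)}{(k-1)!}$ gives the product formula on the right-hand side.

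An alternative, equally short route is to use the standard enumeration of semistandard tableaux of a one-row shape via stars and bars: a weakly increasing word of length $m$ on $[k]$ corresponds to a choice of how many entries equal each of $1,\ldots,k$, i.e. a weak composition of $m$ into $k$ parts, of which there are $\binom{m+k-1}{k-1}$. I would present whichever of these phrasings is cleanest, probably the multiset-coefficient one, since the target expression is already written as a multiset coefficient.

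I do not expect any real obstacle here: the statement is essentially the classical count of semistandard tableaux of a single-row shape, and the proof is one bijection plus a binomial identity. The only mild care needed is bookkeeping with the indices $m$ versus $k-1$ when translating between the $\left(\!\!\binom{\cdot}{\cdot}\!\!\right)$ and $\binom{\cdot}{\cdot}$ notations, to make sure the two displayed forms in the proposition genuinely coincide.
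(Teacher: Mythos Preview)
Your proposal is correct. The paper's proof is precisely your ``alternative route'': it chooses the $k-1$ break positions separating the blocks of $1$'s, $2$'s, \ldots, $k$'s among $m+1$ possible positions (with repetition allowed), which is the stars-and-bars count of weak compositions of $m$ into $k$ parts, directly yielding the multiset coefficient $\left(\!\!\binom{m+1}{k-1}\!\!\right)$. Your primary route via multisets of size $m$ from $[k]$ is the dual bijection and requires the extra symmetry step $\binom{m+k-1}{m}=\binom{m+k-1}{k-1}$ to land on the stated form, but the content is the same.
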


\begin{proof}
	For all tableaux of shape $(m)$, one must chose the position after which the array holds no more $1$'s, then the position after which the array holds no more $1$'s and $2$'s, etc. up to the end of the array, which is filled with $k$'s after the $k-1^{\text{th}}$ position. We want to allow repetitions of chosen positions, since we want to allow that an entry does not appear and is "sandwiched out". We also want to allow the tableau with only $1$'s, which is the source of $B(m)_k$, so we add $1$ position which lies outside of the array. Then if this outside position is picked $j$ times, for $1\leq j\leq k-1$, the entries $k-j+1$ to $k$ will not appear in the array. \\
	
	Therefore, one chooses $k-1$ positions in the $m+1$ possible ones for the breaks between integers, allowing for repetitions, and without keeping track of the order in which the positions are picked. This gives precisely the multiset coefficient above.
\end{proof}

\begin{ex}
	The arrays below have their corresponding multiset of positions of breaks noted under them, for $m=10$ and maximal entry $k=5$.
	\[
	\begin{array}{ccc} 
	\young(1111111333) & \young(1222224444) & \young(5555555555) \\
	\{ 8,8,11,11\} & 
	\{ 2,7,7,11\} &
	\{ 1,1,1,1\}
	\end{array}
	\]
\end{ex}

\begin{cor}
	The number of tableaux with maximal entry $n$ in a quasicrystal $B(\alpha)_n$, for any composition $\alpha$ of $m$ in $s$ parts, is given by
	\[
	\left(\left(\begin{array}{c} m+1 \\ n-s \end{array}\right)\right) = \left(\begin{array}{c} m+n-s \\ n-s \end{array}\right) = \frac{(m+1) \cdot (m+2)\cdot \ldots \cdot (m+n-s)}{(n-s)!}.
	\]
\end{cor}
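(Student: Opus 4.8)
The plan is to combine Theorem~\ref{theo:IsomWithBm} with Proposition~\ref{prop:NumberOfTableauxInBm}. By Theorem~\ref{theo:IsomWithBm}, a quasicrystal $B(\alpha)_n$, where $\alpha$ is a composition of $m$ into $s$ parts (with $s\leq n$), is isomorphic as an oriented graph to $B(m)$ with maximal entry $n-s+1$, i.e. to $B(m)_{n-s+1}$. An isomorphism of oriented graphs is in particular a bijection of vertex sets, so the number of tableaux in $B(\alpha)_n$ equals the number of tableaux in $B(m)_{n-s+1}$.

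Next I would apply Proposition~\ref{prop:NumberOfTableauxInBm} with $k = n-s+1$: the number of tableaux in $B(m)_k$ is $\binom{m+k-1}{k-1}$, equal to $\frac{(m+1)(m+2)\cdots(m+k-1)}{(k-1)!}$. Substituting $k-1 = n-s$ yields $\binom{m+n-s}{\,n-s\,} = \frac{(m+1)(m+2)\cdots(m+n-s)}{(n-s)!}$, which is exactly the claimed expression (and its multiset-coefficient form $\bigl(\!\binom{m+1}{n-s}\!\bigr)$). I would also remark that the standing hypothesis $s\leq n$ guarantees $k = n-s+1\geq 1$, so the formula is well defined; in the extremal case $s=n$ it correctly evaluates to $1$, matching the fact that a quasicrystal $B(\alpha)_n$ with $\ell(\alpha)=n$ consists of the single source tableau $T_\alpha$.

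There is essentially no obstacle: the substance is entirely contained in the two results already established, and the argument is a substitution of parameters. The only point requiring care is the bookkeeping—matching the ``maximal entry'' parameter $n-s+1$ of $B(m)$ against the argument $k$ of Proposition~\ref{prop:NumberOfTableauxInBm}, so that $k-1 = n-s$ and the binomial comes out as $\binom{m+n-s}{n-s}$ rather than being off by one in either slot.
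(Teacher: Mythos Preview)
Your proposal is correct and is exactly the paper's approach: the corollary is placed immediately after Proposition~\ref{prop:NumberOfTableauxInBm} and is meant to follow directly by combining it with Theorem~\ref{theo:IsomWithBm} via the substitution $k=n-s+1$, precisely as you describe.
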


\begin{cor}
	For any composition $\alpha\vDash m$ in $s$ parts, the number of monic monomials in $F_\alpha(x_1,\ldots, x_n)$ is equal to the binomial coefficient above.
\end{cor}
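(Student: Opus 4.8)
The plan is to assemble this statement from material already in hand: the first corollary following Theorem~\ref{theo:IsomWithBm}, which says that $F_\alpha(x_1,\ldots,x_n)$ and $F_{(m)}(x_1,\ldots,x_{n-s+1})$ have the same number of monic monomials (valid whenever $s\leq n$); the identity $F_{(m)}=h_m=s_{(m)}$, which lets me count the monomials of the latter via one-row tableaux; and Proposition~\ref{prop:NumberOfTableauxInBm}, which counts the vertices of $B(m)_k$. So there is really no new idea to supply, only a careful chaining of bijections.

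First I would check that ``number of monic monomials'' is unambiguous. Since $F_\alpha=\sum_{\alpha\preccurlyeq\beta}M_\beta$ and the $M_\beta$ have pairwise disjoint monomial supports (the sequence of nonzero exponents of any monomial of $M_\beta$ spells out $\beta$), every monomial occurs in $F_\alpha$ with coefficient $0$ or $1$; hence counting monomials with multiplicity agrees with counting distinct monomials, and the same holds for $F_{(m)}$.

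Next I would handle the one-row case. We have $F_{(m)}=s_{(m)}=\sum_{T\in\SSYT((m))}x^T$, and a weakly increasing word of length $m$ is recovered uniquely from its content, so $T\mapsto x^T$ is a bijection from the vertices of $B(m)_k$ onto the monic monomials of $F_{(m)}(x_1,\ldots,x_k)$. Thus the number of monic monomials of $F_{(m)}(x_1,\ldots,x_k)$ equals $|B(m)_k|$, which by Proposition~\ref{prop:NumberOfTableauxInBm} is $\binom{m+k-1}{k-1}$. Setting $k=n-s+1$ and invoking the corollary recalled above yields
\[
\#\,\{\text{monomials of }F_\alpha(x_1,\ldots,x_n)\}\;=\;\#\,\{\text{monomials of }F_{(m)}(x_1,\ldots,x_{n-s+1})\}\;=\;\binom{m+n-s}{n-s},
\]
which is the binomial coefficient in the statement. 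As an alternative route, one could instead use Proposition~\ref{thm:SourcesQuasiSym}: the vertices of a subcomponent $B(T_\alpha)$ are in bijection with the monic monomials of $F_\alpha$, because two tableaux sharing a minimal parsing of type $\alpha$ have distinct weights — within each maximal horizontal band the entries are weakly increasing, hence are forced by the weight — and then apply the preceding corollary giving $|B(\alpha)_n|=\binom{m+n-s}{n-s}$.

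I do not expect any genuine obstacle: this is a bookkeeping corollary. The only points needing a sentence of justification are the $0/1$-coefficient observation and the injectivity of $T\mapsto x^T$ on one-row semistandard tableaux (or, on the alternative route, that a minimal parsing together with a weight determines the tableau); both are immediate. One should also state the standing hypothesis $s\leq n$, since otherwise $F_\alpha(x_1,\ldots,x_n)$ vanishes and the binomial coefficient must be read by the usual convention.
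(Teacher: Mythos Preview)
Your proposal is correct and follows exactly the route the paper intends: chain the corollary after Theorem~\ref{theo:IsomWithBm} (equating the monomial counts of $F_\alpha(x_1,\ldots,x_n)$ and $F_{(m)}(x_1,\ldots,x_{n-s+1})$) with Proposition~\ref{prop:NumberOfTableauxInBm}, or equivalently invoke the immediately preceding corollary counting $|B(\alpha)_n|$ together with Proposition~\ref{thm:SourcesQuasiSym}. The paper leaves this as an unproved corollary, and your added remarks on the $0/1$ coefficients and the hypothesis $s\leq n$ only make the argument cleaner.
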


\begin{theo*}
	The number of tableaux of shape $\lambda$ with maximal entry $n$ is given by 
	\[|SSYT(\lambda)_n|=
	\sum_{0\leq d\leq D}
	f^\lambda_d\cdot
	\left(\begin{array}{c} |\lambda|+n-d-1 \\ n-d-1 \end{array}\right),
	\]
	where $f^\lambda_d$ denotes the number of standard tableaux of shape $\lambda$ with $d$ descents, and $D$ is the maximal number of descents in a standard tableau of shape $\lambda$.
	\label{theo:nbrTab}
\end{theo*}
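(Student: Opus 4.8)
The plan is to combine the three results already established: the Gessel decomposition formula~(\ref{eq:Gessel}), the identification of each subcomponent $B(T_\alpha)$ inside $B(\lambda)_n$ with a quasicrystal $B(\alpha)_n$ (theorem~\ref{theo:decompBlambdaIntoSubcomponents} and corollary~\ref{prop:GraphStructureSubcomponents}), and the count of vertices in a quasicrystal (proposition~\ref{prop:NumberOfTableauxInBm} together with its corollary). The underlying observation is that $|SSYT(\lambda)_n|$ is just the number of vertices of $B(\lambda)_n$, and by theorem~\ref{theo:decompBlambdaIntoSubcomponents} the vertex set of $B(\lambda)_n$ is partitioned into the vertex sets of the subcomponents $B(T_\alpha)_n$, one for each standard tableau $T\in SYT(\lambda)$ with $DesComp(T)=\alpha$. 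So $|SSYT(\lambda)_n| = \sum_{T\in SYT(\lambda)} \#\{\text{vertices of }B(DesComp(T))_n\}$, where the sum is over standard tableaux whose descent composition actually has at most $n$ parts (compositions with more than $n$ parts contribute an empty subcomponent, consistent with the binomial coefficient vanishing).

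First I would set up notation: for $T\in SYT(\lambda)$, write $s(T)=\ell(DesComp(T))$, so that $s(T)-1$ is the number of descents of $T$. By the corollary to proposition~\ref{prop:NumberOfTableauxInBm}, the quasicrystal $B(\alpha)_n$ with $|\alpha|=m=|\lambda|$ and $\ell(\alpha)=s$ has exactly $\binom{m+n-s}{n-s}$ vertices; rewriting $s = d+1$ where $d$ is the number of descents, this is $\binom{|\lambda|+n-d-1}{n-d-1}$. Crucially, this count depends on $T$ only through its number of descents $d$, not through the finer data of $\alpha$ or the shape of the source $T_\alpha$ — this is exactly the content of corollary~\ref{prop:GraphStructureSubcomponents}. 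Hence I would group the standard tableaux of shape $\lambda$ by number of descents: letting $f^\lambda_d = \#\{T\in SYT(\lambda) : T \text{ has } d \text{ descents}\}$ and $D$ the maximal number of descents occurring, the partition of the vertex set gives
\[
|SSYT(\lambda)_n| = \sum_{T\in SYT(\lambda)} \binom{|\lambda|+n-s(T)-0}{\,\cdot\,} = \sum_{d=0}^{D} f^\lambda_d \binom{|\lambda|+n-d-1}{n-d-1},
\]
which is the claimed formula.

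There is essentially no deep obstacle here; the work is bookkeeping. The one point that needs a word of care is the range of summation and the boundary behaviour: a standard tableau $T$ with $d$ descents has $s=d+1$ parts in its descent composition, and if $s>n$ (equivalently $d\geq n$) the corresponding subcomponent $B(T_\alpha)_n$ is \emph{empty} — there simply are no semistandard tableaux of that shape and weight with maximal entry $n$ (cf. part~4 of proposition~\ref{prop:conditionsOfDescentCompositionsAndPartitions}). One checks that the binomial coefficient $\binom{|\lambda|+n-d-1}{n-d-1}$ is $1$ when $n-d-1=0$ (the source-only quasicrystal) and is naturally taken to be $0$ when $n-d-1<0$, so those terms drop out automatically and the stated sum up to $D$ is correct regardless of how $D$ compares to $n$. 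I would also note the sanity check: for $n$ large enough that every descent composition has at most $n$ parts, the formula is a polynomial in $n$ of the expected degree, recovering the known fact that $|SSYT(\lambda)_n|$ is eventually polynomial in $n$.

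Finally I would remark that the same argument, not grouping by number of descents, gives the refined identity $|SSYT(\lambda)_n| = \sum_{\alpha} f^\lambda_\alpha \binom{|\lambda|+n-\ell(\alpha)}{n-\ell(\alpha)}$ over descent compositions $\alpha$ of $|\lambda|$, where $f^\lambda_\alpha$ is as in theorem~\ref{theo:decompBlambdaIntoSubcomponents}; the displayed formula is its coarsening, which is the useful form because $f^\lambda_d$ is computable from the hook-length formula combined with the descent statistics of standard tableaux.
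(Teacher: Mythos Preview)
Your proof is correct and follows essentially the same route as the paper's: partition $B(\lambda)_n$ into the disjoint subcomponents $B(T_\alpha)_n$ indexed by standard tableaux (theorem~\ref{theo:decompBlambdaIntoSubcomponents}), count the vertices of each via the isomorphism with $B(m)_{n-s+1}$ and proposition~\ref{prop:NumberOfTableauxInBm}, group by number of descents $d=s-1$, and note that the binomial coefficient vanishes when $n\le d$. Your treatment of the boundary case and the additional remarks (refined identity over compositions, eventual polynomiality) are nice embellishments but do not change the argument; one small slip is the placeholder ``$\binom{|\lambda|+n-s(T)-0}{\,\cdot\,}$'' in the displayed equation, which you should clean up.
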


\begin{ex}
	There are 14 standard tableaux of shape $(4,3)$, which all appear in figure~\ref{fig:graphTabStd}. Among these, two have one descents, 
	eight have two 
	and four have three. 
	
	Then the number of tableaux of shape $(4,3)$ with maximal entry $n$, for any $n$, is equal to \\
	$|SSYT((4,3))_n| = 2\cdot \left(\begin{array}{c} 7+n-1-1 \\ n-1-1 \end{array}\right) + 8\cdot \left(\begin{array}{c} 7+n-2-1 \\ n-2-1 \end{array}\right) + 4\cdot \left(\begin{array}{c} 7+n-3-1 \\ n-3-1 \end{array}\right)$.\\
	
	One may verify that when $n=4$, one retreives $140$ tableaux, the number of tableaux in figure~\ref{figure:CrystalDecompInQuasiSymFcts}. The following values for $n=5,6,7$ (for the same shape $(4,3)$) are respectively $560,1764$ and $4704$ tableaux, which shows how fast these numbers grow. This formula can then really help to enumerate tableaux of a given shape, especially when $n$ is large.
\end{ex}

\begin{proof}[Proof of theorem~\ref{theo:nbrTab}]
	Recall that a tableau with $d$ descents will have an associated descent composition in $s=d+1$ parts. Let then $D$ be the maximal number of descents in standard tableaux of shape $\lambda$, and let $S$ be the maximal number of parts of the associated descent compositions. \\
	
	When $n=D+1=S$, all connected components are present in $B(\lambda)_n$. All those associated to a descent composition $\alpha$ in $s=d+1$ parts is isomorphic to a crystal $B(m)_k$, which number of vertices is the multiset coefficients above, where $k=n-s+1=n-d$ and $m=|\lambda|=|\alpha|$. The quasicrystals in $B(\lambda)_n$ are disjoint, and are counted by the standard tableaux of shape $\lambda$. Therefore we have the formula above. Moreover, this formula accounts for the cases where $n< D+1=S$, since the terms of the summation with $n< d+1 \leq D+1$ will be zero.  
\end{proof}

\begin{cor}
	For a partition $\lambda\vDash m$, the number of monic monomials in $s_\lambda(x_1,\ldots, x_n)$ is equal to the sum above.
\end{cor}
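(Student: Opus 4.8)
The plan is to deduce the Corollary directly from Theorem~\ref{theo:nbrTab} together with the tableau definition of the Schur function recalled in Section~\ref{section:Background}. Since $s_\lambda(x_1,\ldots,x_n)=\sum_{t\in SSYT(\lambda)_n}x^t$ with $x^t=x_1^{\gamma_1}\cdots x_n^{\gamma_n}$ for $\gamma=\mathrm{wt}(t)$, every semistandard tableau of shape $\lambda$ with entries in $[n]$ contributes exactly one monic monomial to this expansion, and collecting equal terms produces the Kostka number $K^\lambda_\gamma=|\{t\in SSYT(\lambda)_n:\mathrm{wt}(t)=\gamma\}|$ as the coefficient of $x^\gamma$. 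Hence the number of monic monomials occurring in $s_\lambda(x_1,\ldots,x_n)$, counted with multiplicity (equivalently, $\sum_\gamma K^\lambda_\gamma$), is exactly $|SSYT(\lambda)_n|=|B(\lambda)_n|$.

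First I would make this identification explicit --- it was already noted above that counting the tableaux in $B(\lambda)_n$ is the same as counting the monic monomials of $s_\lambda(x_1,\ldots,x_n)$. Then it remains only to apply Theorem~\ref{theo:nbrTab}, which rewrites $|SSYT(\lambda)_n|$ as $\sum_{0\le d\le D}f^\lambda_d\binom{|\lambda|+n-d-1}{n-d-1}$, namely the sum in the statement. One can also give a self-contained argument bypassing Theorem~\ref{theo:nbrTab}: by Theorem~\ref{theo:decompBlambdaIntoSubcomponents} the monomials of $s_\lambda(x_1,\ldots,x_n)$ are partitioned among the quasicrystals $B(T_\alpha)_n$, there are $f^\lambda_\alpha$ subcomponents of each type $\alpha$, and by the preceding corollaries each such subcomponent contributes $\binom{|\lambda|+n-s}{n-s}$ monic monomials with $s=\ell(\alpha)$; regrouping by $d=s-1$ yields the same formula.

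I expect no real obstacle. The only point worth a sentence of care is the meaning of ``number of monic monomials'': throughout the paper this denotes the number of monic-monomial summands counted with multiplicity, i.e.\ $\sum_\gamma K^\lambda_\gamma$, which is what equals $|SSYT(\lambda)_n|$ and what Theorem~\ref{theo:nbrTab} computes. Were one to read it instead as the number of \textit{distinct} monomials actually appearing, this would be $|\{\gamma:K^\lambda_\gamma>0\}|$, a genuinely different and more delicate quantity not addressed by Theorem~\ref{theo:nbrTab}; so the Corollary is to be understood with the with-multiplicity convention, under which it is immediate.
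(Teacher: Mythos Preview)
Your proposal is correct and follows exactly the paper's approach: the corollary is stated without proof because the paper already remarked, just before Theorem~\ref{theo:nbrTab}, that counting tableaux in $B(\lambda)_n$ is the same as counting monic monomials in $s_\lambda(x_1,\ldots,x_n)$, so the corollary is immediate from that theorem. Your additional care about the with-multiplicity reading of ``monic monomials'' is well placed and consistent with the paper's usage.
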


Although very interesting, the formula above for the number of tableaux of shpe $\lambda$ and maximal entry $n$ does not solve the much more interesting problem of giving a (closed) formula for Kostka numbers $K^{\lambda}_\mu$, a problem highlighted by Stanley (\cite{Stanley}, Vol.2, section 7.10). Recall that these $K^{\lambda}_\mu$ count the number of tableaux of shape $\lambda\vdash m$ and filling $\mu$ (a fixed composition of $m$).\\ 

There is however a combinatorial formula, given below, for which only a bijective proof is known. The setting of quasicrystals allowes us to give a constructive combinatorial proof, which we discuss below.
\begin{prop}[{\cite[Proposition 5.3.6]{Sagan}}]
Let $\lambda\vdash n$, $S=\{n_1<n_2<\ldots<n_k \}\subseteq [n-1]$, and $\mu=(n_1,n_2-n_1,\ldots,n-n_k)$. Then 
\[|\{P : P \text{ a standard } \lambda\text{-tableau  and } Des(P)\subseteq S\}|=K^\lambda_\mu.\]
\label{prop:SaganKostka}
\end{prop}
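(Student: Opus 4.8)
The plan is to count the semistandard tableaux of shape $\lambda$ and weight $\mu$ by distributing them over the quasicrystal decomposition of $B(\lambda)$. Throughout, write $m=|\lambda|$ (this is Sagan's $n$); since $\mu=(n_1,n_2-n_1,\dots,n-n_k)$ is a composition of $m$ with all parts $\ge 1$, every tableau of shape $\lambda$ and weight $\mu$ uses entries at most $\ell(\mu)$, so all of them lie inside $B(\lambda)$ (equivalently inside $B(\lambda)_N$ for any $N\ge\ell(\mu)$), and $K^\lambda_\mu$ is exactly the number of vertices of $B(\lambda)$ of weight $\mu$. By Theorem~\ref{theo:decompBlambdaIntoSubcomponents}, $B(\lambda)$ is the disjoint union of its subcomponents $B(T_\alpha)$, one for each standard tableau $P$ of shape $\lambda$ with $DesComp(P)=\alpha$; so it suffices to count, in each $B(T_\alpha)$, the vertices of weight $\mu$.

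First I would determine which subcomponents contain a vertex of weight $\mu$. If $T$ is a tableau of shape $\lambda$ and weight $\mu$, its minimal parsing is obtained by merging consecutive ``value bands'' (the cells with entry $1$, then those with entry $2$, and so on) into maximal horizontal bands; hence the type $\alpha=DesComp(T)$ of this parsing is obtained from $\mu$ by summing consecutive parts, i.e.\ $\alpha\preccurlyeq\mu$. Conversely, fix any descent composition $\alpha$ of $\lambda$ with $\alpha\preccurlyeq\mu$. By Proposition~\ref{thm:SourcesQuasiSym} the vertices of $B(T_\alpha)$ are in bijection with the monomials of $F_\alpha=\sum_{\alpha\preccurlyeq\beta}M_\beta$ via $T\mapsto x^T$. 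As $\mu$ has no zero part, the monomial $x^\mu$ lies in $M_\mu$, and $M_\mu$ occurs in $F_\alpha$ exactly when $\alpha\preccurlyeq\mu$; since distinct $M_\beta$ have disjoint monomial supports, $x^\mu$ then occurs in $F_\alpha$ with coefficient $1$. Therefore each $B(T_\alpha)$ with $\alpha\preccurlyeq\mu$ contains exactly one vertex of weight $\mu$, and every other subcomponent contains none.

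Putting this together, a weight-$\mu$ vertex of $B(\lambda)$ belongs to exactly one subcomponent, whose type $\alpha$ satisfies $\alpha\preccurlyeq\mu$, and conversely each such subcomponent supplies exactly one weight-$\mu$ vertex; hence
\[
K^\lambda_\mu=\sum_{\alpha\preccurlyeq\mu} f^\lambda_\alpha
=\bigl|\{P\in SYT(\lambda) : DesComp(P)\preccurlyeq\mu\}\bigr|,
\]
using Theorem~\ref{theo:decompBlambdaIntoSubcomponents} for $f^\lambda_\alpha=|\{P\in SYT(\lambda):DesComp(P)=\alpha\}|$ in the last equality (subcomponents $B(T_\alpha)$ with $\alpha\preccurlyeq\mu$ but $\alpha$ not a descent composition for $\lambda$ simply do not occur, consistently with $f^\lambda_\alpha=0$). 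Finally, under the bijection between compositions of $m$ and subsets of $[m-1]$ recalled just before Proposition~\ref{prop:bijTabStdTabBandesMax}, one has $\alpha\preccurlyeq\mu$ iff $I_\alpha\subseteq I_\mu$, and here $I_\mu=\{n_1,\dots,n_k\}=S$ while $I_{DesComp(P)}=Des(P)$; thus $DesComp(P)\preccurlyeq\mu$ iff $Des(P)\subseteq S$, which is the stated formula.

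I expect the delicate point to be the ``exactly one weight-$\mu$ vertex'' claim for $\alpha\preccurlyeq\mu$: concretely it amounts to checking that destandardizing the standard tableau $P$ attached to $B(T_\alpha)$ along the composition $\mu$ (which refines $DesComp(P)$) yields a genuine semistandard tableau whose minimal parsing is still exactly $\alpha$ — so that it lands in $B(T_\alpha)$ rather than in a coarser subcomponent — and that no other vertex of $B(T_\alpha)$ shares its weight. Both facts are packaged into the bijection of Proposition~\ref{thm:SourcesQuasiSym}, and $\alpha\preccurlyeq\mu$ is precisely the hypothesis that makes them work; everything else is bookkeeping with Gessel's decomposition~(\ref{eq:Gessel}) and the refinement order on compositions.
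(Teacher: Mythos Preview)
Your proof is correct and follows essentially the same approach as the paper: decompose $B(\lambda)$ into its quasicrystal subcomponents $B(T_\alpha)$, show that each subcomponent with $\alpha\preccurlyeq\mu$ contains exactly one tableau of weight~$\mu$ and all others contain none, and then count subcomponents via the bijection with standard tableaux (Theorem~\ref{theo:decompBlambdaIntoSubcomponents}). The paper isolates the ``exactly one weight-$\mu$ vertex iff $\alpha\preccurlyeq\mu$'' step as a separate proposition with a direct filling argument, whereas you extract it from the monomial description of $F_\alpha$ in Proposition~\ref{thm:SourcesQuasiSym}; and you add the explicit translation $DesComp(P)\preccurlyeq\mu\iff Des(P)\subseteq S$, which the paper leaves implicit.
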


\begin{prop}
	A given weight $\mu$ appears exactly once in a quasicrystal $B(\alpha)$, and if and only if $\alpha \preccurlyeq \mu$. 
\end{prop}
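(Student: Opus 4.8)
The plan is to read off the ``if and only if'' directly from Proposition~\ref{thm:SourcesQuasiSym}, and to obtain ``exactly once'' by counting: the total number of vertices of the quasicrystal and the number of monomials of $F_\alpha$ are both already known, and they coincide, which leaves no room for a repeated weight.

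First fix the maximal entry $n$. Recall that $B(T_\alpha)$ is the induced subgraph of $B(\lambda)$ on the shape-$\lambda$ tableaux sharing the fixed maximal-band decomposition carried by $T_\alpha$; since all subcomponents of a given type $\alpha$ are isomorphic as labelled oriented graphs with sources of weight $\alpha$ (Corollary~\ref{prop:GraphStructureSubcomponents}), and a crystal operator shifts the weight by a fixed vector, each of them realises the same multiset of vertex weights, so the assertion does not depend on the host $B(\lambda)$. For the equivalence: by Proposition~\ref{thm:SourcesQuasiSym} the monomials $x^{T}$ for $T\in B(T_\alpha)$ are exactly the monomials of $F_\alpha=\sum_{\alpha\preccurlyeq\beta}M_\beta$. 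A given monomial $x^\mu$ lies in a single $M_\beta$, the one whose $\beta$ is the sequence of nonzero parts of $\mu$, and $M_\beta$ is a summand of $F_\alpha$ precisely when $\alpha\preccurlyeq\beta$. Hence a weight $\mu$ is realised in $B(T_\alpha)$ if and only if $\alpha\preccurlyeq\mu$ (with $\preccurlyeq$ read on $\mu$ after deleting zero parts, as in the proof of Proposition~\ref{thm:SourcesQuasiSym}).

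For ``exactly once'', let $c_\mu\geq 0$ be the number of $T\in B(T_\alpha)_n$ of weight $\mu$. Summing over $\mu$, $\sum_\mu c_\mu$ is the number of vertices of $B(\alpha)_n$, which by the corollaries following Proposition~\ref{prop:NumberOfTableauxInBm} equals $\binom{m+n-s}{\,n-s\,}$. On the other hand, by Proposition~\ref{thm:SourcesQuasiSym} the number of $\mu$ with $c_\mu\geq 1$ is the number of distinct monomials of $F_\alpha(x_1,\dots,x_n)$; since the $M_\beta$ have disjoint monomial supports and each $M_\beta$ is multiplicity free, $F_\alpha$ is multiplicity free, so this number is the number of monic monomials of $F_\alpha(x_1,\dots,x_n)$, again $\binom{m+n-s}{\,n-s\,}$ by the same corollaries. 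Therefore $\sum_{\mu\,:\,c_\mu\geq 1}c_\mu=\#\{\mu:c_\mu\geq 1\}$ while every term of the sum is at least $1$, which forces $c_\mu=1$ for every realised $\mu$; combined with the previous paragraph this is the proposition.

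Both cardinality inputs are already in the text (ultimately from Theorem~\ref{theo:IsomWithBm} and Proposition~\ref{prop:NumberOfTableauxInBm}), so the write-up is short; what it borrows is precisely the work hidden there. If instead one wanted a self-contained ``local'' proof of uniqueness, the crux would be: for fixed $\mu$ with $\alpha\preccurlyeq\mu$, the consecutive block of entry values occupying each maximal band $H_r$ is uniquely determined (the partial sums of $\alpha$ cut $\mu$'s nonzero parts into the $s$ blocks, uniquely because those parts are positive), after which one must show each $H_r$ admits a unique semistandard filling. That last point rests on the elementary but slightly delicate fact that a maximal horizontal band, read left to right by columns, carries weakly increasing entries — equivalently, that the explicit bijection $B(T_\alpha)\cong B(m)_{n-s+1}$ in the proof of Theorem~\ref{theo:IsomWithBm} is well defined, a weakly increasing word being determined by its weight. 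I expect this band-filling lemma to be the only real obstacle along that alternative route.
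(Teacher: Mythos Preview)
Your argument is correct, but it is not the route the paper takes. The paper's own proof is the ``local'' one you sketch in your closing paragraph: once the minimal parsing of type $\alpha$ is fixed, a weight $\mu$ with $\alpha\preccurlyeq\mu$ determines the tableau directly, because the refinement $\alpha\preccurlyeq\mu$ prescribes which consecutive block of entry values fills each maximal horizontal band, and that determines the tableau. The paper states this in one line and does not spell out the band-filling step you flag as ``slightly delicate''.

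Your counting approach is a legitimate alternative. Note, though, that the detour through the explicit binomial $\binom{m+n-s}{n-s}$ is not needed: the moment you observe that $F_\alpha$ is multiplicity-free and that Proposition~\ref{thm:SourcesQuasiSym} gives the generating-function identity $\sum_{T\in B(T_\alpha)}x^T=F_\alpha$, the coefficient of every $x^\mu$ on the left is forced to lie in $\{0,1\}$. What your version buys is that it sidesteps any discussion of how a horizontal band is filled; what the paper's version buys is that it is constructive and independent of the earlier isomorphism with $B(m)_{n-s+1}$. One caution on circularity: make sure the corollary you invoke for the monomial count of $F_\alpha$ is the one obtained from the bijection of index sequences in the proof of Theorem~\ref{theo:IsomWithBm} (as it is in the paper), and not via the tableau count itself, or the two $\binom{m+n-s}{n-s}$'s you equate would be the same number for the same reason.
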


\begin{proof}
	We have seen that all weights $\alpha\preccurlyeq \mu$ in at most $n$ parts occur in $B(\alpha)_n$, and only such weights appear in $B(\alpha)_n$. Moreover, for a given minimal parsing of type $\alpha$ of a given shape $\lambda$, $\alpha\preccurlyeq \mu$ defines uniquely a tableau of shape $\lambda$ in the corresponding subcomponent $B(T_\alpha)_n$ in $B(\lambda)_n$, where $T_\alpha$ has the desired minimal parsing. Therefore, in any subcomponent $B(T_\alpha)_n$, there is exactly one tableau of the desired weight $\mu$ in at most $n$ parts.
\end{proof}

By the above, each subcomponent $B(T_\alpha)$ in $B(\lambda)$ with $\alpha\preccurlyeq \mu$ will contain exactly one tableau of filling $\mu$. This gives us the following reinterpretation of Proposition~\ref{prop:SaganKostka}, with a constructive proof.

\begin{cor}
	The Kostka number $K^\lambda_\mu$ counting tableaux of shape $\lambda$ and composition weight $\mu$ is given by the number of  subcomponents $B(T_\alpha)$ of $B(\lambda)$ such that $\alpha\preccurlyeq\mu$, and this number is given by the number of standard tableaux with descent composition $\alpha$ with $\alpha\preccurlyeq \mu$. So
	\[
	K^\lambda_\mu = \left|\{T\in SYT(\lambda) : DesComp(T)\preccurlyeq \mu \} \right|.
	\]
\end{cor}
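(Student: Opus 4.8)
The plan is to deduce this corollary directly from the proposition that immediately precedes it, together with the decomposition theorem (Theorem~\ref{theo:decompBlambdaIntoSubcomponents}). The preceding proposition establishes that a given weight $\mu$ appears in a quasicrystal $B(\alpha)$ exactly when $\alpha\preccurlyeq\mu$, and in that case exactly once. So the first step is to translate this into a statement about the full crystal $B(\lambda)$: by Theorem~\ref{theo:decompBlambdaIntoSubcomponents}, the vertex set of $B(\lambda)_n$ (for $n$ large enough that all subcomponents occur, say $n\geq S$) is the disjoint union of the vertex sets of its subcomponents $B(T_\alpha)$, one for each standard tableau of shape $\lambda$ via the bijection of Proposition~\ref{prop:bijTabStdTabBandesMax}. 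Hence a tableau of shape $\lambda$ with weight $\mu$ lies in $B(T_\alpha)$ if and only if $\alpha\preccurlyeq\mu$, and it is the unique such tableau in that subcomponent.

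The second step is simply to count. Since the subcomponents partition the tableaux of shape $\lambda$, the number of tableaux of shape $\lambda$ and weight $\mu$ equals the number of subcomponents $B(T_\alpha)$ of $B(\lambda)$ with $\alpha\preccurlyeq\mu$ (each contributes exactly one such tableau, and subcomponents with $\alpha\not\preccurlyeq\mu$ contribute none). By the bijection between sources $T_\alpha$ and standard tableaux of shape $\lambda$ and descent composition $\alpha$, this count is exactly $|\{T\in SYT(\lambda) : DesComp(T)\preccurlyeq\mu\}|$, which is $K^\lambda_\mu$ by definition of the Kostka number as the number of semistandard tableaux of shape $\lambda$ and content $\mu$. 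One should also note that the passage between the set-theoretic formulation of Proposition~\ref{prop:SaganKostka} (via $Des(P)\subseteq S$) and the refinement formulation ($DesComp(P)\preccurlyeq\mu$) is exactly the bijection between descent sets and descent compositions recalled before Proposition~\ref{prop:bijTabStdTabBandesMax}: $Des(P)\subseteq S$ is equivalent to saying the descent composition of $P$ is a refinement of $\mu$.

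Concretely I would write: fix $\lambda\vdash m$ and a composition $\mu$ of $m$; choose any $n\geq\ell(\mu)$ and $n\geq S$ so that every tableau of shape $\lambda$ and weight $\mu$ appears in $B(\lambda)_n$, and every subcomponent of $B(\lambda)$ is already present. Apply Theorem~\ref{theo:decompBlambdaIntoSubcomponents} to write the tableaux of shape $\lambda$ and weight $\mu$ as a disjoint union over subcomponents; apply the preceding proposition to see that the $\mu$-weight part of $B(T_\alpha)$ is a single vertex if $\alpha\preccurlyeq\mu$ and empty otherwise; conclude $K^\lambda_\mu=|\{\alpha : B(T_\alpha)\text{ a subcomponent of }B(\lambda),\ \alpha\preccurlyeq\mu\}|$; and finally rewrite this set using the source-to-standard-tableau bijection of Theorem~\ref{theo:decompBlambdaIntoSubcomponents} / Proposition~\ref{prop:bijTabStdTabBandesMax} as $\{T\in SYT(\lambda) : DesComp(T)\preccurlyeq\mu\}$.

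There is essentially no serious obstacle here — the corollary is a bookkeeping consequence of machinery already built. The one point that needs a little care, and which I would state explicitly rather than gloss over, is the independence of $n$: the count $K^\lambda_\mu$ does not depend on the ambient $n$, so one must check that taking $n$ large does not introduce spurious tableaux of weight $\mu$ (it does not, since a tableau of weight $\mu$ has at most $\ell(\mu)\leq n$ distinct entries) and that it does capture all of them. The ``constructive'' nature of the proof, as advertised in the surrounding text, is precisely that each counted standard tableau $T$ with $DesComp(T)\preccurlyeq\mu$ yields its semistandard partner by the explicit recipe: take the minimal parsing of $T$ into maximal horizontal bands and fill it according to the refinement $DesComp(T)\preccurlyeq\mu$ (the bijection of Proposition~\ref{prop:bijTabStdTabBandesMax} and the uniqueness statement in the preceding proposition), so I would mention that the map $T\mapsto(\text{the }\mu\text{-weight tableau in }B(T_\alpha))$ is the explicit bijection realizing the equality.
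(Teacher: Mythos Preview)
Your proposal is correct and follows essentially the same approach as the paper: the paper's own ``proof'' is the single sentence preceding the corollary, namely that each subcomponent $B(T_\alpha)$ with $\alpha\preccurlyeq\mu$ contributes exactly one tableau of weight $\mu$, and then the bijection with standard tableaux from Theorem~\ref{theo:decompBlambdaIntoSubcomponents}/Proposition~\ref{prop:bijTabStdTabBandesMax} finishes. Your write-up is more careful (choice of $n$, the explicit bijection, the translation between $Des(P)\subseteq S$ and $DesComp(P)\preccurlyeq\mu$), but the logical content is identical.
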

	
\section{Layout of subcomponents $B(T_\alpha)$ in a crystal $B(\lambda)$}
\label{section:dualposition}

From the previous sections, we have a decomposition of $B(\lambda)$ into a disjoint union of induced subgraphs $B(T_\alpha)$ corresponding to the $F_\alpha$ appearing in the expansion of $s_\lambda$ in the basis of fundamental quasisymmetric functions. Each of these subcomponents regroup tableaux with a specific minimal parsing of type $\alpha$ and correspond to the standard tableau of shape $\lambda$ with the same minimal parsing. \\

In this section, we study how a crystal $B(\lambda)$ breaks down into these subcomponents, by looking at how they are positioned relatively to one another. In order to understand better their distribution, we need to introduce the evacuation involution $\operatorname{EVAC}$ on tableaux, as we'll show it is a crystal anti-automorphism on $B(\lambda)_n$ which reverses descent compositions and respects the above decomposition.\\ 

The evacuation map then sets up a duality between subcomponents associated to descent compositions $\alpha=(\alpha_1,\ldots,\alpha_s)$ and those associated to its reverse composition $\overset{\leftarrow}{\alpha}=(\alpha_s,\ldots,\alpha_1)$. The dual subcomponents will appear in dual position in $B(\lambda)$, and will be dual to one to another as graphs. We also study the (fixed) skeleton structure of $B(\lambda)$ obtained by replacing each subcomponent by the associated standard tableau, and compare this underlying graph structure on standard tableaux with the one of dual equivalence graphs, introduced by Assaf \cite{AssafDualEquivGraph}.

\subsection{Evacuation as a crystal anti-automorphism}

The evacuation involution was first introduced by Schützenberger as an involution on tableaux of the same shape $\lambda$ \cite{SchutzenbergerEvac}.
Berenstein and Zelevinsky showed that the effect of evacuation can be described in the following way \cite{BerensteinZelevinsky}.\\

Let $T$ be a tableau of any shape. Rotate $T$ $180\degres$, change entries $i$ to $n-i+1$, where $n$ is the largest entry in $T$, and rectify this skew tableau using jeu de taquin (see section~\ref{section:RSKPlacticCoplactic} for a recall of the proces of rectifications). We shall use this result as the definition of evacuation: $\operatorname{EVAC}=jdt\circ \text{compl}\circ Rot_{180\degres}$, where $Rot_{180\degres}$, $\text{compl}$ and $jdt$ are the three intermediate manipulations described above. The obtained tableau $\operatorname{EVAC}(T)$ has the same shape as $T$, and applying $\operatorname{EVAC}$ to it recovers $T$ \cite{BerensteinZelevinsky}.

\begin{ex}
	For the straight tableau $T=\young(1123,223,3,4)$, 
	
	\hspace{-9em}
	\begin{minipage}{1.4\textwidth}
		\centering
		$jdt\circ \text{compl}\circ Rot_{180\degres}\left(T\right) = jdt\circ \text{compl} \left(\gyoung(:::;4,:::;3,:;3;2;2,;3;2;1;1)\right) = jdt \left(\gyoung(:::;1,:::;2,:;2;3;3,;2;3;4;4)\right) = \young(1223,234,3,4) = \operatorname{EVAC}\left(T\right)$.
	\end{minipage}
	
	\vspace{1em}
	One may verify that repeating this process on $\operatorname{EVAC}(T)$ recovers $T$.
	
\end{ex}

Berenstein and Zelevinsky showed that $\operatorname{EVAC}$ is an anti-automorphism on crystals $B(\lambda)_n$ \cite{BerensteinZelevinsky}. This means that if a tableau $T$ is obtained from the source $1_\lambda$ of the crystal by a sequence of crystal operators $T=f_{i_k}\circ f_{i_{k-1}}\circ \ldots\circ f_{i_2}\circ f_{i_1}(1_\lambda)$, then $\operatorname{EVAC}(T) = e_{n-i_k}\circ e_{n-i_{k-1}}\circ \ldots\circ e_{n-i_2}\circ e_{n-i_1}(T_{min})$, where $T_{min}$ is the sink of the crystal, which would then be $\operatorname{EVAC}(1_\lambda)$. \\

The original proof of Berenstein and Zelevinsky uses crystal theory and representation theory heavily, so we give an alternate proof of this fact in annex~\ref{annex:proofsEVAC}, which may be more accessible. We also believe our proof to be of interest on its own, as it uses an anti-automorphism of crystals on words, defined by $Rot(w)=w_0 w w_0$, which has been already studied in the literature. We also give a proof that $\operatorname{EVAC}$ reverses descent compositions.

\subsection{Double duality of subcomponents $B(T_\alpha)$ and $B(T_{\overset{\leftarrow}{\alpha}})$ in $B(\lambda)$}

There are two notions of duality at play in this section. The notion of duality which comes from graph (or poset) theory, where the dual of a labelled oriented graph is the graph obtained by reversing arrows, relabelling by $n-i+1$, and replacing each vertex by its "dual image". There is also the notion of duality coming from the involution $\operatorname{EVAC}$, where $\operatorname{EVAC}(A)$ is dual to $A$ for a set of tableaux $A$.\\ 

In the case of the subcomponents of a crystal $B(\lambda)_n$, we will see that these two notions of duality coincide, as evacuation sets up a duality between the subcomponents. For each subcomponent $B(T_\alpha)$, there will be a subcomponent $B(T_{\overset{\leftarrow}{\alpha}})$ such that they are the reciprocal image under the $\operatorname{EVAC}$ map and are the dual graphs of one another, where the dual image of each vertex is precisely its image under the $\operatorname{EVAC}$ map.\\

We also say they are placed dually in $B(\lambda)_n$, 
since $\operatorname{EVAC}$ is an anti-automorphism of crystals on $B(\lambda)_n$. 
It then gives us the following results on the subcomponents of $B(\lambda)$ associated to the fundamental quasisymmetric functions $F_\alpha$. 




\begin{theo*}
	Subcomponents $B(T_\alpha)$ and $B(T_{\overset{\leftarrow}{\alpha}})$ both necessarily appear in a given crystal $B(\lambda)$, they are dual to each other and are positionned in dual locations in $B(\lambda)$. The crystal anti-automorphism between them is the evacuation map $\operatorname{EVAC}$.
\end{theo*}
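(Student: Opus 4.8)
The plan is to deduce this theorem from two facts established earlier: that $\operatorname{EVAC}$ is a crystal anti-automorphism on $B(\lambda)_n$ (the Berenstein--Zelevinsky result, cited and reproved in the annex), and that $\operatorname{EVAC}$ reverses descent compositions (also promised in the annex). Granting these, the argument is essentially bookkeeping about how $\operatorname{EVAC}$ interacts with the partition of $B(\lambda)$ into the subgraphs $B(T_\alpha)$ of Theorem~\ref{theo:decompBlambdaIntoSubcomponents}.

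First I would observe that since $\operatorname{EVAC}$ is an involution on the vertex set of $B(\lambda)_n$ and sends a tableau with descent composition $\alpha$ to one with descent composition $\overset{\leftarrow}{\alpha}$, it restricts to a bijection from the vertex set of each subcomponent $B(T_\alpha)$ onto the vertex set of a union of subcomponents of type $\overset{\leftarrow}{\alpha}$; because $\operatorname{EVAC}$ is a graph anti-automorphism, it carries the induced subgraph on a connected set to the induced subgraph on a connected set, hence it maps the connected piece $B(T_\alpha)$ bijectively onto a single connected piece, which must be some $B(T'_{\overset{\leftarrow}{\alpha}})$; applying $\operatorname{EVAC}$ again and using that it is an involution shows $\operatorname{EVAC}(B(T'_{\overset{\leftarrow}{\alpha}})) = B(T_\alpha)$, so the two subcomponents are genuinely paired (and in particular both occur in $B(\lambda)$). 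Next I would record that this pairing sends the source $T_\alpha$ to the sink of $B(T'_{\overset{\leftarrow}{\alpha}})$ and vice versa: indeed $\operatorname{EVAC}$ interchanges the global source $1_\lambda$ and global sink of $B(\lambda)_n$ and reverses the crystal-operator sequences (replacing $f_i$ by $e_{n-i}$), so within the sub-poset $B(T_\alpha)$ — which by Proposition~\ref{thm:SourcesQuasiSym} has unique source $T_\alpha$ — the unique source is carried to the unique sink of the image. This is exactly the statement that the two subgraphs are dual as labelled oriented graphs, with the dual image of a vertex being its $\operatorname{EVAC}$-image; the relabelling $i \mapsto n-i$ on edges and the arrow reversal are precisely the content of "$\operatorname{EVAC}$ is a crystal anti-automorphism."

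Finally, "positioned in dual locations in $B(\lambda)$" I would phrase as: if $T = f_{i_k}\circ\cdots\circ f_{i_1}(1_\lambda)$, then the edge-path realizing $\operatorname{EVAC}(T)$ from the global sink is the $e_{n-i_\bullet}$-reversal of this path, so the embedding of $B(T_{\overset{\leftarrow}{\alpha}})$ into $B(\lambda)_n$ is obtained from that of $B(T_\alpha)$ by the global anti-automorphism — there is nothing further to prove beyond invoking Berenstein--Zelevinsky plus the descent-reversal lemma. The only genuine content, and the step I expect to be the real obstacle, is establishing that $\operatorname{EVAC}$ reverses descent compositions on semistandard (not merely standard) tableaux and that it is compatible with standardization — i.e.\ that $std(\operatorname{EVAC}(T)) = \operatorname{EVAC}(std(T))$ — since that compatibility is what guarantees $\operatorname{EVAC}$ respects the minimal-parsing partition rather than merely permuting descent data; but this is exactly what is deferred to annex~\ref{annex:proofsEVAC}, so here I would simply cite it and assemble the pieces above.
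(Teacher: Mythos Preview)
Your proposal is correct and follows essentially the same route as the paper: grant that $\operatorname{EVAC}$ is a crystal anti-automorphism reversing descent compositions, then argue that each connected piece $B(T_\alpha)$ is carried bijectively onto a single connected piece of type $\overset{\leftarrow}{\alpha}$, with the dual positioning coming directly from the $f_i \leftrightarrow e_{n-i}$ intertwining.

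Two small remarks. First, your connectedness argument (anti-automorphism of the ambient graph sends a connected induced subgraph on a descent-class to a connected induced subgraph on the reversed descent-class, hence into a single $B(T'_{\overset{\leftarrow}{\alpha}})$; involutivity then forces equality) is slightly cleaner than the paper's, which instead tracks weights explicitly to check that if $f_i$ preserves a parsing then $e_{n-i}$ preserves the reversed parsing. Second, your closing worry about needing $std(\operatorname{EVAC}(T)) = \operatorname{EVAC}(std(T))$ is unnecessary here: that compatibility would give a shortcut, but your own connectedness-plus-involution argument already guarantees that $\operatorname{EVAC}$ respects the minimal-parsing partition and not merely the descent-composition labeling, so you do not need to cite or prove it.
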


\begin{proof}
We have seen that the source of a subcomponent $B(T_\alpha)$ is the tableau $T_\alpha$ of shape $\lambda$, with filling and descent composition  $\alpha$. 
%
%
%
Consider a set $A$ of tableaux in $B(T_\alpha)$. Since $\operatorname{EVAC}$ reverses descent compositions, then $\operatorname{EVAC}(A)$ will be a set of tableaux with descent composition $\overset{\leftarrow}{\alpha}$. Moreover, since it is also a crystal anti-automorphism, if a tableau $T\in A$ is obtained from $T_\alpha$ by a sequence of crystal operators $f_i$ which preserve descent compositions, then $\operatorname{EVAC}(T)$ is obtained from $\operatorname{EVAC}(T_\alpha)$ by the complementary sequence of crystal operators $e_{n-i}$, and so the set $\operatorname{EVAC}(A)$ is connected and regroups tableaux with same descent composition $\overset{\leftarrow}{\alpha}$. 
This is because the $f_i$'s modify the weight by $b_{i+1}-b_i$, and the $e_i$'s, by $-b_{i+1}+b_i$, so the effect of $f_i$ on a weight $\gamma$ will be dual to that of $e_{n-i}$ on $\overset{\leftarrow}{\gamma}$.\\ 

Then, subcomponents $B(T_\alpha)$ are mapped onto subcomponents $B(T_{\overset{\leftarrow}{\alpha}})$, and they 
%
are dual to each other as graphs, with the dual image of each vertex given by their image under $\operatorname{EVAC}$.\\

Finally, by the previous argument, we also have that the two subcomponents are positionned dually in $B(\lambda)$: if a sequence of crystal operators $f_{i_k}\circ \ldots \circ f_{i_1}$, applied to the source $1_\lambda$ of $B(\lambda)_n$, gives a vertex of $B(T_\alpha)$, then the "dual" sequence $e_{n-i_k+1}\circ \ldots \circ e_{n-i_1+1}$, applied to the sink $\operatorname{EVAC}(1_\lambda)$ of $B(\lambda)_n$, gives its dual image in $B(T_{\overset{\leftarrow}{\alpha}})$, $\operatorname{EVAC}(T_\alpha)$.
%
\end{proof}

This tells us that the evacuation map respects intrinsically the decomposition of $B(\lambda)$ into its subcomponents $B(T_\alpha)$.

\begin{cor}
The source $T_\alpha$ of a subcomponent  $B(T_\alpha)$ is sent by
$\operatorname{EVAC}$ on the sink of the corresponding dual subcomponent $B(T_{\overset{\leftarrow}{\alpha}})$, and conversely.
\end{cor}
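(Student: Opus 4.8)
The plan is to read this corollary off directly from the preceding theorem together with the earlier description of the sink of a subcomponent. By that theorem, $\operatorname{EVAC}$ restricts to a bijection from the vertex set of $B(T_\alpha)$ onto that of $B(T_{\overset{\leftarrow}{\alpha}})$ which reverses every parsing-preserving edge: a parsing-preserving step $T\xrightarrow{i}T'$ inside $B(T_\alpha)$ corresponds to a step $\operatorname{EVAC}(T')\xrightarrow{n-i}\operatorname{EVAC}(T)$ inside $B(T_{\overset{\leftarrow}{\alpha}})$. Hence $\operatorname{EVAC}$ is a genuine anti-isomorphism of the underlying oriented graphs of the two induced subgraphs.

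First I would recall that $T_\alpha$ is characterised inside $B(T_\alpha)$ as the unique vertex with no incoming edge (it is the source, by proposition~\ref{thm:SourcesQuasiSym}), and dually the sink of $B(T_{\overset{\leftarrow}{\alpha}})$ is the unique vertex with no outgoing edge (by the corollary describing the sink of a subcomponent, where entries $i$ of $T_{\overset{\leftarrow}{\alpha}}$ are replaced by $n-s+i$). An anti-isomorphism of oriented graphs interchanges the properties ``no incoming edge'' and ``no outgoing edge'', hence carries sources to sinks; therefore $\operatorname{EVAC}(T_\alpha)$ must be the sink of $B(T_{\overset{\leftarrow}{\alpha}})$. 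Since $\operatorname{EVAC}$ is an involution, applying the same argument with $\alpha$ replaced by $\overset{\leftarrow}{\alpha}$ yields the converse: the sink of $B(T_\alpha)$ is sent by $\operatorname{EVAC}$ to the source $T_{\overset{\leftarrow}{\alpha}}$ of $B(T_{\overset{\leftarrow}{\alpha}})$.

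As an independent verification one can argue at the level of weights. The source $T_\alpha$ has weight $(\alpha_1,\dots,\alpha_s,0,\dots,0)\in[n]^n$, so $\operatorname{EVAC}(T_\alpha)$ has the reversed weight $(0,\dots,0,\alpha_s,\dots,\alpha_1)=(0^{n-s},\overset{\leftarrow}{\alpha})$, which is precisely the weight of the sink of $B(T_{\overset{\leftarrow}{\alpha}})$ described earlier; since a given weight occurs at most once in a quasicrystal, the two tableaux must coincide. This also matches the observation that $\operatorname{EVAC}$ maps $B(T_\alpha)$ into $B(T_{\overset{\leftarrow}{\alpha}})$, established in the theorem.

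I do not expect a serious obstacle here, as the statement is essentially a repackaging of the theorem. The only point requiring a little care is to confirm that the bijection furnished by the theorem is genuinely an anti-isomorphism of the relevant \emph{induced} subgraphs — that is, that parsing-preserving operators $f_i$ on $B(T_\alpha)$ correspond exactly to parsing-preserving operators $e_{n-i}$ on $B(T_{\overset{\leftarrow}{\alpha}})$, with no edge lost or gained — which follows from the weight-duality $f_i:\gamma\mapsto\gamma+b_{i+1}-b_i$ versus $e_{n-i}:\overset{\leftarrow}{\gamma}\mapsto\overset{\leftarrow}{\gamma}-b_{n-i+1}+b_{n-i}$ already exploited in the proof of the theorem.
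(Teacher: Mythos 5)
Your proof is correct and follows the route the paper intends: the corollary is stated without a separate proof precisely because, as you observe, the theorem makes $\operatorname{EVAC}$ an anti-isomorphism between the induced subgraphs $B(T_\alpha)$ and $B(T_{\overset{\leftarrow}{\alpha}})$, and an anti-isomorphism exchanges the unique source with the unique sink. Your weight computation ($\operatorname{EVAC}(T_\alpha)$ has weight $(0^{n-s},\overset{\leftarrow}{\alpha})$, matching the sink described in the earlier corollary, with uniqueness of a given weight in a quasicrystal closing the argument) is a valid independent check consistent with the paper's earlier results.
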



\begin{ex}
Figure~\ref{figure:CrystalDecompInQuasiSymFcts} 
illustrates the duality (and symmetry) of the positioning of the subcomponents $B(T_\alpha)$ and $B(T_{\overset{\leftarrow}{\alpha}})$. Note how the two subcomponents associated to $(2,3,2)$ are dual, and start and end on the same rows of the crystal.
\label{ex:DualitySymmetricComp}
\end{ex}


\begin{prop}
	The quasicrystals $B(\alpha)$ are self-dual as labelled oriented graphs when $\alpha$ is symmetric, i.e. $\alpha= \overset{\leftarrow}{\alpha}$.
	\label{prop:symetricAlpha}
\end{prop}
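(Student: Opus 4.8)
The plan is to deduce the statement directly from the double-duality theorem just proved, together with Corollary~\ref{prop:GraphStructureSubcomponents}. Recall that, by Corollary~\ref{prop:GraphStructureSubcomponents}, for a fixed maximal entry $n$ the quasicrystal $B(\alpha)_n$ is the common isomorphism class, as a labelled oriented graph, of all the subcomponents $B(T_\alpha)$ with maximal entry $n$, taken over every shape $\lambda$ admitting $\alpha$ as a descent composition. So it suffices to exhibit one such $B(T_\alpha)$ that is isomorphic to its own (labelled-oriented-graph) dual.

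First I would fix such a $\lambda$ and such an $n$, and apply the previous theorem to the subcomponent $B(T_\alpha)$ of $B(\lambda)_n$: it tells us that $B(T_\alpha)$ is dual, as a labelled oriented graph, to the subcomponent $B(T_{\overset{\leftarrow}{\alpha}})$, the anti-isomorphism being the restriction of $\operatorname{EVAC}$, which reverses descent compositions and is a crystal anti-automorphism of $B(\lambda)_n$. Since $\operatorname{EVAC}$ is a bijection of $B(\lambda)_n$ onto itself, it permutes the subcomponents, so $\operatorname{EVAC}(B(T_\alpha))$ is genuinely one of the subcomponents of $B(\lambda)_n$, and its descent composition is $\overset{\leftarrow}{\alpha}$. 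Now invoke the hypothesis $\alpha=\overset{\leftarrow}{\alpha}$: the dual of $B(T_\alpha)$ is again a subcomponent of type $\alpha$, hence isomorphic as a labelled oriented graph to $B(T_\alpha)$ by Corollary~\ref{prop:GraphStructureSubcomponents}. Therefore $B(T_\alpha)$ is isomorphic to its dual, i.e. $B(\alpha)_n$ is self-dual; since $n$ was arbitrary this also gives the self-duality of $B(\alpha)$.

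I expect the only genuinely delicate point to be bookkeeping about the two a priori distinct notions of duality in play — the purely graph-theoretic one (reverse all arrows, relabel $i\mapsto n-i$, replace each vertex by its dual image) versus the one induced by $\operatorname{EVAC}$ — but these have already been identified in the double-duality theorem, so no new argument is needed here. A second, harmless subtlety to flag is that when $\lambda$ carries several subcomponents of type $\alpha$, $\operatorname{EVAC}$ may send $B(T_\alpha)$ to a \emph{different} subcomponent of the same type rather than to itself; this does not affect the conclusion, since all subcomponents of type $\alpha$ with maximal entry $n$ share the isomorphism class $B(\alpha)_n$ and the argument only compares isomorphism classes.
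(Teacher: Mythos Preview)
Your argument is correct and is essentially the same as the paper's: apply the double-duality theorem so that $\operatorname{EVAC}$ sends $B(T_\alpha)$ anti-isomorphically onto some $B(T_{\overset{\leftarrow}{\alpha}})$, use $\alpha=\overset{\leftarrow}{\alpha}$ to see the image is again of type $\alpha$, and conclude via Corollary~\ref{prop:GraphStructureSubcomponents} that $B(\alpha)$ is isomorphic to its own dual. Your explicit flagging of the possibility that $\operatorname{EVAC}$ lands in a different subcomponent of the same type matches exactly the paper's remark following the proof.
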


\begin{rem}
	This self-duality holds on the structure of quasicrystals $B(\alpha)$ (as graphs). However, the subcomponents $B(T_\alpha)$ are not necessarily sent onto themself under the $\operatorname{EVAC}$ map, as illustrated in example~\ref{ex:DualitySymmetricComp}.
\end{rem}

\begin{proof}[Proof of Proposition~\ref{prop:symetricAlpha}]
	$\operatorname{EVAC}$ is an anti-automorphism which sends one subcomponent $B(T_\alpha)$ onto a subcomponent $B(T_{\overset{\leftarrow}{\alpha}})=B(T_\alpha')$. Since both subcomponents are then isomorphic and dual, and all subcomponents in $B(\alpha)$ are isomorphic, then $B(\alpha)$ is auto-dual as a labelled oriented graph when $\alpha = \overset{\leftarrow}{\alpha}$.
\end{proof}

\begin{cor}
For a fixed composition $\alpha$, all subcomponents $B(T_\alpha)$ have their source on the same row $j+1$ of $B(\lambda)$, where $j$ is the number of transformations $+(b_{i+1}-b_{i})=+(0,\ldots,0,-1,1,0,\ldots,0)$ applied to $\lambda$ to obtain $\alpha$, or equivalently the number $j$ of crystal operators $f_i$ applied to $1_\lambda$ to obtain $T_\alpha$.
\end{cor}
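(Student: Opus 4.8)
The plan is to exhibit a grading on $B(\lambda)$ for which every edge has weight $1$, so that the ``row'' of a vertex is a well-defined function of its weight alone, and then simply to evaluate it at $T_\alpha$. For a weight vector $\gamma=(\gamma_1,\gamma_2,\ldots)$ put $c(\gamma)=\sum_{i\geq 1} i\,\gamma_i$ (a finite sum, since each tableau has finitely many cells). As recalled in the proof of Proposition~\ref{thm:SourcesQuasiSym}, a crystal operator $f_i$ changes the weight of a tableau by $b_{i+1}-b_i$, hence changes $c$ by exactly $(i+1)-i=1$. Consequently, along any directed path of $B(\lambda)$ from $u$ to $v$ the quantity $c$ increases by exactly the length of the path; in particular every directed path from the source $1_\lambda$ (of weight $\lambda$) to a fixed vertex $v$ has the same length, namely $c(\mathrm{wt}(v))-c(\lambda)$. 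This common value is the number of crystal operators needed to reach $v$ from $1_\lambda$, and the row of $v$ in $B(\lambda)$ is that number plus one.

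Now I would apply this with $v=T_\alpha$. By Proposition~\ref{thm:SourcesQuasiSym}, $T_\alpha$ is reachable from $1_\lambda$, and by definition its weight is the composition $\alpha$ (padded by zeros, which does not affect $c$). Hence the number $j$ of crystal operators $f_i$ applied to $1_\lambda$ to obtain $T_\alpha$ equals $c(\alpha)-c(\lambda)=\sum_i i\,\alpha_i-\sum_i i\,\lambda_i$. This expression manifestly does not depend on the particular subcomponent $B(T_\alpha)$, i.e. on how $\lambda$ is cut into maximal horizontal bands of respective lengths $\alpha_i$; it depends only on $\alpha$ and $\lambda$. Therefore all sources $T_\alpha$ (for this fixed $\alpha$ and $\lambda$) lie on the same row $j+1$ of $B(\lambda)$. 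The same computation identifies $j$ with the number of transformations $+(b_{i+1}-b_i)$ needed to pass from the weight vector $\lambda$ to the weight vector $\alpha$, since each such transformation also raises $c$ by $1$, so any chain of them from $\lambda$ to $\alpha$ — one exists because the sequence of Proposition~\ref{thm:SourcesQuasiSym} provides it on weights — has length $c(\alpha)-c(\lambda)$.

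There is no serious obstacle: the only point requiring a word of care is the well-definedness of ``row'', that is, the path-independence of the distance from the source in the (possibly infinite) graph $B(\lambda)$, which is exactly what the additive statistic $c$ supplies. Everything else reduces to the observation that $c(T_\alpha)$ depends on $T_\alpha$ only through its weight $\alpha$.
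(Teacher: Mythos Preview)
Your proof is correct and follows essentially the same approach as the paper: both arguments rest on the observation that the row of a vertex in $B(\lambda)$ is determined by its weight alone, and then note that every source $T_\alpha$ has weight $\alpha$. Your use of the explicit grading $c(\gamma)=\sum_i i\,\gamma_i$ to certify that all directed paths from $1_\lambda$ to a given vertex have the same length is a slightly more transparent way of phrasing what the paper expresses by saying that any sequence of crystal operators reaching weight $\alpha$ is a reordering of a fixed multiset.
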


\begin{proof}
We have seen that the crystal operators $f_i$ have effect $+(b_i-b_{i-1})$ on the weight of tableaux. Then row $j+1$ of $B(\lambda)$ holds all tableaux obtained from $1_\lambda$ by applying $j$ crystal operators $f_i$. If a certain sequence of crystal operators $f_{i_1}, f_{i_2}, \ldots , f_{i_j}$ modify $\lambda$ to obtain $\alpha$, then all tableaux of weight $\alpha$ are obtained by the application of a certain reordering of these crystal operators. Then, all tableaux of weight $\alpha$ will be on the same row $j+1$. Among these, all tableaux with weight and minimal parsing of type $\alpha$ lie on this row $j+1$ of $B(\lambda)$. Since these are the sources of the subcomponents of $B(\alpha)$, then we have the desired result.
\end{proof}

\begin{rem}
	In a crystal $B(\lambda)$, all descent compositions $\alpha$ occur as weights of tableaux, and are therefore obtained from $\lambda$ by applying modifications $b_{i+1}-b_i$, from $\lambda$ to $\overset{\leftarrow}{\lambda}$. Moreover, all the descent compositions $\alpha$ are not refinements of another and do not include zero parts (by their definition as counting lengths of horizontal bands in minimal parsings).
\end{rem}

\subsection{Crystal skeleton}

If we replace every subcomponent of $B(\lambda)_n$ by the associated standard tableaux of shape $\lambda$, and keep only one oriented edge between linked subcomponents, with one copy of all labels appearing at least once, one gets a labelled oriented graph on standard tableaux. Note that this can create cycles. We will see that we can restrict this further by keeping only the minimal label on each oriented edge. We call the result the skeleton of $B(\lambda)_n$.\\

This notion of skeleton is especially interesting, because it gives a compact visual representation of $B(\lambda)_n$ for any $n$, 
and also of $B(\lambda)$, as we will see.\\ 

Doing this to figure~\ref{figure:CrystalDecompInQuasiSymFcts}, one gets the labelled oriented graph on standard tableaux of shape $(4,3)$ of figure~\ref{fig:graphTabStd}.
%
Since we know the oriented graph structure of these subcomponents by corollary~\ref{prop:GraphStructureSubcomponents}, these can be expanded to essentially retreive the full graph, with some edges missing between subcomponents. Note the symmetry coming from the auto-duality of $B(\lambda)$. \\ 

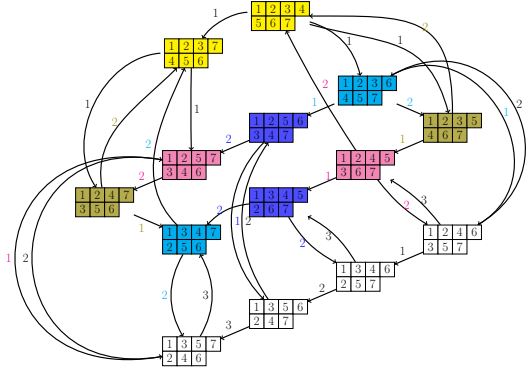
\begin{figure}[h!]
	\Yboxdim{0.7cm}
	\begin{center}
		\resizebox{0.5\textwidth}{!}{
			\begin{tikzpicture}[scale=0.6]
				\tikzset{myptr/.style={decoration={markings,mark=at position 1 with {\arrow[scale=3,>=stealth]{>}}},postaction={decorate}}}
				\tikzstyle{every node}=[font=\Large]
				\begin{pgfonlayer}{nodelayer}
					\node [style=none] (0) at (0, 8) {\Yfillcolour{yellow} \young(1234,567)};
					\node [style=none] (1) at (7, 2) {\Yfillcolour{cyan} \young(1236,457)};
					\node [style=none] (2) at (-7, 5) {\Yfillcolour{yellow} \young(1237,456)};
					\node [style=none] (3) at (0, -1) {\Yfillcolour{blue!70}\young(1256,347)};
					\node [style=none] (4) at (14, -1) {\Yfillcolour{olive!70}\young(1235,467)};
					\node [style=none] (5) at (7, -4) {\Yfillcolour{magenta!60}\young(1245,367)};
					\node [style=none] (6) at (-7, -4) {\Yfillcolour{magenta!60}\young(1257,346)};
					\node [style=none] (7) at (-14, -7) {\Yfillcolour{olive!70}\young(1247,356)};
					\node [style=none] (8) at (0, -7) {\Yfillcolour{blue!70}\young(1345,267)};
					\node [style=none] (9) at (-7, -10) {\Yfillcolour{cyan}\young(1347,256)};
					\node [style=none] (10) at (14, -10) {\young(1246,357)};
					\node [style=none] (11) at (7, -13) {\young(1346,257)};
					\node [style=none] (12) at (0, -16) {\young(1356,247)};
					\node [style=none] (13) at (-7, -19) {\young(1357,246)};
				\end{pgfonlayer}
				\begin{pgfonlayer}{edgelayer}
					\draw [->, ultra thick, bend left] (0) to node[right]{1} (1);
					\draw [->, ultra thick, in=105, out=-15, looseness=1.25] (0) to node[above]{1} (4);
					\draw [->, ultra thick] (1) to node[above]{\textcolor{cyan}{2}} (4);
					\draw [->, ultra thick, bend right, looseness=0.75] (0) to node[above left]{1} (2);
					\draw [->, ultra thick, bend left=3] (5) to node[above right]{\textcolor{magenta}{2}} (0);
					\draw [->, ultra thick, in=360, out=90, looseness=1.50] (4) to node[right]{\textcolor{olive}{2}} (0);
					\draw [->, ultra thick, bend right=1] (2) to node[right]{1} (6);
					\draw [->, ultra thick, bend right=60] (2) to node[left]{1} (7);
					\draw [->, ultra thick, in=-135, out=100, looseness=1] (7) to node[left]{\textcolor{olive}{2}} (2);
					\draw [->, ultra thick, in=-115, out=135] (9) to node[above left]{\textcolor{cyan}{2}} (2);
					\draw [->, ultra thick] (1) to node[above left]{\textcolor{cyan}{1}} (3);
					\draw [->, ultra thick, bend left=90, looseness=1.5] (1) to node[right]{\textcolor{cyan}{1}} (10);
					\draw [->, ultra thick, bend right=90, looseness=1.8] (10) to node[right]{2} (1);
					\draw [->, ultra thick] (4) to node[above left]{\textcolor{olive}{1}} (5);
					\draw [->, ultra thick] (3) to node[above left ]{\textcolor{blue}{2}} (6);
					\draw [->, ultra thick, bend right=45, looseness=1] (3) to node[right]{\textcolor{blue}{1}} (12);
					\draw [->, ultra thick, bend left=35, looseness=1] (12) to node[right]{2} (3);
					\draw [->, ultra thick] (5) to node[above right]{\textcolor{magenta}{1}} (8);
					\draw [->, ultra thick, bend right=10] (5) to node[right]{\textcolor{magenta}{2}} (10);
					\draw [->, ultra thick, bend right=10] (10) to node[right]{3} (5);
					\draw [->, ultra thick] (6) to node[above left]{\textcolor{magenta}{2}} (7);
					\draw [->, ultra thick, bend right=100, looseness=2.6] (6) to node[left]{\textcolor{magenta}{1}} (13);
					\draw [->, ultra thick, bend left=100, looseness=2.3] (13) to node[left]{2} (6);
					\draw [->, ultra thick] (7) to node[below left]{\textcolor{olive}{1}} (9);
					\draw [->, ultra thick, bend right=20] (8) to node[left]{\textcolor{blue}{2}} (9);
					\draw [->, ultra thick, bend right=15] (8) to node[left]{\textcolor{blue}{2}} (11);
					\draw [->, ultra thick, bend right=15] (11) to node[left]{3} (8);
					\draw [->, ultra thick, bend right] (9) to node[left]{\textcolor{cyan}{2}} (13);
					\draw [->, ultra thick, bend right] (13) to node[left]{3} (9);
					\draw [->, ultra thick] (10) to node[above left]{1} (11);
					\draw [->, ultra thick] (11) to node[above]{2} (12);
					\draw [->, ultra thick] (12) to node[above left]{3} (13);
				\end{pgfonlayer}
		\end{tikzpicture}}
	\end{center}
	\caption{Skeleton of $B(4,3)_4$, where subcomponents are replaced by their associated standard tableau. Labeled oriented edges $i$ indicate that transformations $f_i$ move from one subcomponent to another, and $i$ is minimal (by definition). Labels are colored according to the origin subcomponent for clarity. The vertical position of a vertex is determined by the row of the source of the associated connected component in $B(4,3)_4$, and alternatively also segregates standard tableaux by their number of descents. \label{fig:graphTabStd}}
\end{figure}

The obtained labelled oriented graph on standard tableaux does not have a crystal structure, in particular because a standard tableau can have two distinct images for the same transformations. For example, in figure~\ref{fig:graphTabStd}, the top standard tableau (sometimes refered to as the \textit{superstandard tableau}) has three distinct images under the transformations $1$. We will see in the proposition below that we are justified in keeping only the minimal label of oriented edges between subcomponents. 

\begin{prop}
	Let $n$ and $\lambda = (\lambda_1,\lambda_2, \ldots,\lambda_\ell)$ be fixed. Let $std(T_\alpha)$ and $std(T_\beta)$ be standard tableaux of shape $\lambda$, and of respective descent compositions $\alpha=(\alpha_1,\ldots,\alpha_k)$ and $\beta=(\beta_1,\ldots,\beta_s)$. Let $i$ be the smallest index such that a crystal operator $f_i$ allows one to pass between the associated subcomponents $B(T_\alpha)$ and $B(T_\beta)$ in $B(\lambda)_n$, for $1\leq i\leq n-1$. Then the crystal operators $f_{i+1},\ldots,f_{i+(n-s)}$ do too.
\end{prop}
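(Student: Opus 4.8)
The plan is to combine a shift symmetry of $B(\lambda)_n$ with the minimality of $i$, reducing the statement to a single well-placed witness for the passage via $f_i$. For $j\ge 0$, let the \textit{entry shift} $\sigma_j$ add $j$ to every entry of a tableau of shape $\lambda$ all of whose entries are $\le n-j$, and let $\sigma_{-j}$ be its partial inverse. Being an order-preserving relabelling of entries, $\sigma_j$ leaves the decomposition of the cells into maximal horizontal bands unchanged, hence preserves the descent composition and maps any tableau of a subcomponent $B(T_\gamma)$ of $B(\lambda)_n$ (with entries $\le n-j$) back into $B(T_\gamma)$. Moreover the parenthesis rule computing $f_c$ on a tableau $U$ and the one computing $f_{c+j}$ on $\sigma_j(U)$ bracket exactly the same cells, so $f_{c+j}\circ\sigma_j=\sigma_j\circ f_c$ and $e_{c+j}\circ\sigma_j=\sigma_j\circ e_c$ wherever both sides are defined; both facts are immediate from the definitions.

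Interpreting ``$f_i$ passes between $B(T_\alpha)$ and $B(T_\beta)$'' as the existence of $T\in B(T_\alpha)$ with $f_i(T)\in B(T_\beta)$ (the reverse direction being symmetric), I claim it is enough to produce one such $T$ with $\max(T)\le s$. Indeed, for $1\le j\le n-s$ the tableau $\sigma_j(T)$ then has all entries $\le s+j\le n$, lies in $B(T_\alpha)$, and satisfies $f_{i+j}(\sigma_j(T))=\sigma_j(f_i(T))\in B(T_\beta)$. One also gets $i\le s-1$ for free: since $f_i(T)$ has minimal parsing $\beta\ne\alpha$, Proposition~\ref{thm:SourcesQuasiSym} forces $T$ to contain an $(i{+}1)$-cell, whence $i+1\le\max(T)\le s$; so the indices $i+1,\dots,i+(n-s)$ all lie in $\{1,\dots,n-1\}$.

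To find such a $T$ I would start from an arbitrary witness $(T_0,f_i(T_0))$, which exists by hypothesis on $i$. Minimality of $i$ forces $\min(T_0)=1$: otherwise $\sigma_{-(\min(T_0)-1)}$ together with the intertwining relation would give a witness for the strictly smaller index $i-(\min(T_0)-1)$ between the same two subcomponents (note $\min(f_i(T_0))\ge\min(T_0)$ and $\min(T_0)\le i$). It then remains to replace $T_0$ by a witness whose $f_i$-image is the \textit{source} $T_\beta$ of $B(T_\beta)$ --- equivalently, to show that $e_i(T_\beta)$ is non-null with minimal parsing $\alpha$ --- for then $T:=e_i(T_\beta)\in B(T_\alpha)$, $f_i(T)=T_\beta\in B(T_\beta)$, and $\max(T)\le\max(T_\beta)=s$, which closes the argument (and explains the bound $n-s$: $T_\beta$ uses only the entries $1,\dots,s$).

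The hard part is exactly this last claim about $T_\beta$, and this is where I expect the real obstacle to be. Non-nullity should follow from structure: the $i$-cells and $(i{+}1)$-cells of $T_\beta$ are its maximal horizontal bands numbered $i$ and $i+1$, and maximality of band $i$ forces a column carrying an $i$ immediately above an $i+1$; reading this through the row reading word, using the staircase shape of two consecutive maximal horizontal bands, should exhibit an uncoupled bracket. For the minimal parsing of $e_i(T_\beta)$ being $\alpha$, I would lift a path inside $B(T_\beta)$ from the witness image $f_i(T_0)$ up to the source $T_\beta$ and track the $e_i$-preimage: the steps of the path whose index is far from $i$ commute past $e_i$ and keep the preimage inside $B(T_\alpha)$, while the (few) steps with index near $i$ need a separate, hands-on check, where the minimality of $i$ should again be the lever via the shift-down argument applied to the intermediate configurations. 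I expect the bookkeeping of how a single crystal operator recombines maximal horizontal bands to be the genuinely delicate point; by contrast the shift machinery and the reduction are essentially formal.
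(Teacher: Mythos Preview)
Your shift map $\sigma_j$ and its intertwining with the crystal operators is exactly the device the paper uses: its proof takes a witness $T$ of weight $\gamma$, replaces $\gamma$ by $(0^k,\gamma)$, and observes that $f_{i+k}$ then does the job. You are in fact more explicit than the paper about what this really needs, namely a witness with $\max(T)\le s$ so that the shift by $n-s$ keeps all entries in $[n]$; the paper's three-line argument passes over that point in silence.

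Where your proposal has a genuine gap is the reduction to $T=e_i(T_\beta)$. Nothing in the hypotheses forces the $f_i$-preimage of the \emph{source} $T_\beta$ to lie in $B(T_\alpha)$: you only know that \emph{some} tableau in $B(T_\alpha)$ has its $f_i$-image in $B(T_\beta)$, not that $T_\beta$ itself is among those images, and several distinct subcomponents can feed into $B(T_\beta)$ under $f_i$ while $e_i(T_\beta)$ sits in only one of them. Your path-lifting sketch is promising for steps $e_j$ with $|j-i|\ge 2$, since there the operators commute and the minimal parsings of $T$ and $f_i(T)$ agree away from positions $i{-}1,i,i{+}1,i{+}2$, so a $\beta$-preserving step on the image side is automatically $\alpha$-preserving on the preimage side. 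But for $j$ adjacent to $i$ you have neither commutation nor agreement of the two parsings, and you have not carried out that check; that is exactly the substantive combinatorics, not a bookkeeping detail. In the paper the actual bound on the length of a minimal witness is not obtained here at all but in the later proof of the skeleton theorem, via the case analysis of how $f_i$ can recombine maximal horizontal bands at a minimal crossing; that analysis yields a witness of length $\max(\ell(\alpha),\ell(\beta))$ directly, without ever asserting that $e_i(T_\beta)\in B(T_\alpha)$.
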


\begin{proof}
	Crystal operators act on weights $\gamma$ of tableaux in $B(\lambda)_n$. If a tableau $T$ lies in $B(T_\alpha)$, it has also $\alpha \preccurlyeq wt(T)=\gamma$. If $f_i(T)$ lies in $B(T_\beta)$, then it means that $\beta\preccurlyeq  wt(f_i(T))=\gamma+(b_{i+1}-b_i)$.
	This will also be the case for $(0^k,\gamma)$, for $0\leq k\leq n-s$, with the crystal operator having the corresponding actions on the weights being $f_{i+k}$.
\end{proof}

This means we can keep only the smallest label $i$ of oriented edges between subcomponents. We then call the obtained skeleton 
$Skeleton(\lambda)_n$.

\begin{theo*}
	For $\lambda\vdash m$ fixed, and let $S$ be the maximal length of descent compositions for $\lambda$. Then the skeletons $Skeleton(\lambda)_n$ of the crystals $B(\lambda)_n$ are equal for all $n\geq S$.
	For $1\leq n\leq S$, the skeleton of $B(\lambda)_n$ consists of the induced subgraph of $Skeleton(\lambda)_S$ containing standard tableaux of shape $\lambda$ with descent compositions in at most $n$ parts. 
	\label{prop:SkeletonS}
\end{theo*}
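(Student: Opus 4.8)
The plan is to identify the vertex set of $Skeleton(\lambda)_n$ exactly, and then to show that decreasing $n$ by one only deletes vertices and then passes to the induced subgraph, so that both assertions follow by a downward induction.

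First I would pin down the vertices. By Proposition~\ref{prop:bijTabStdTabBandesMax} the subcomponents $B(T_\alpha)$ of $B(\lambda)_n$ correspond bijectively to standard tableaux $P$ of shape $\lambda$ whose descent composition $\alpha$ admits a source $T_\alpha$ inside $B(\lambda)_n$; since $T_\alpha$ has weight $\alpha$, this is exactly the condition $\ell(\alpha)\le n$, which is also the constraint forced by Proposition~\ref{prop:conditionsOfDescentCompositionsAndPartitions}(4). Hence the vertex set of $Skeleton(\lambda)_n$ is precisely $\{P\in \SYT(\lambda):\ \ell(DesComp(P))\le n\}$. For $n\ge S$ this is all of $\SYT(\lambda)$, so the vertex sets agree; for $n<S$ it is precisely the vertex set of the claimed induced subgraph of $Skeleton(\lambda)_S$. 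It therefore suffices to prove, for every $n$, that $Skeleton(\lambda)_n$ is the subgraph of $Skeleton(\lambda)_{n+1}$ induced on $\{P:\ \ell(DesComp(P))\le n\}$; iterating this starting from any $n\ge S$ gives $Skeleton(\lambda)_n=Skeleton(\lambda)_S$, and iterating downward gives the statement for $n<S$.

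Now fix standard tableaux $P,Q$ of descent compositions $\alpha,\beta$, both of length $\le n$, with associated subcomponents $B(T_\alpha),B(T_\beta)$. The point is that $B(\lambda)_n$ is the induced subgraph of $B(\lambda)_{n+1}$ on tableaux with entries $\le n$, that the minimal parsing is intrinsic to a tableau, and that by Theorem~\ref{theo:IsomWithBm} the subgraph of $B(T_\alpha)\subseteq B(\lambda)_{n+1}$ on tableaux with entries $\le n$ coincides with $B(T_\alpha)$ viewed inside $B(\lambda)_n$. Consequently any $f_i$ with $i\le n-1$ witnessing an edge $B(T_\alpha)\to B(T_\beta)$ in $B(\lambda)_n$ still does so in $B(\lambda)_{n+1}$, so the inter-subcomponent edges only proliferate as $n$ grows and their minimal labels only decrease; this is the easy half. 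For the converse I would use the weight criterion from the proof of Proposition~\ref{thm:SourcesQuasiSym}: $f_i$ keeps a weight-$\gamma$, parsing-$\alpha$ tableau inside $B(T_\alpha)$ iff $\alpha\preccurlyeq\gamma+(b_{i+1}-b_i)$. Thus an edge of label $i$ between $B(T_\alpha)$ and $B(T_\beta)$ is encoded by weights $\gamma,\gamma'=\gamma+(b_{i+1}-b_i)$ with $\alpha\preccurlyeq\gamma$, $\beta\preccurlyeq\gamma'$ and $\alpha\not\preccurlyeq\gamma'$, realised by tableaux of shape $\lambda$. Given such data with $\gamma_{n+1}\ge 1$ and $i\le n-1$, the plan is to push the weight at position $n+1$ down into the last maximal horizontal band --- concretely, to apply the top operator $e_n$, changing an $(n+1)$ into an $n$, which stays inside $B(T_\alpha)$ precisely when that last band already contains the value $n$ --- checking that this preserves the three refinement conditions while strictly lowering the maximal entry, and iterating until no $(n+1)$ remains; the resulting witness then lives in $B(\lambda)_n$. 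When $i\le n-2$ the move $e_n$ commutes past $f_i$ and one is done; the delicate cases are $i=n-1$ (where $e_n$ and $f_{n-1}$ do not commute) and a ``monochromatic'' last band (forcing $e_n$ out of $B(T_\alpha)$). I would dispatch these by transporting through the evacuation anti-automorphism $\operatorname{EVAC}$ to the reversed compositions $\overset{\leftarrow}{\alpha},\overset{\leftarrow}{\beta}$, where the offending label becomes $1$ or $2$, applying the non-delicate case there, transporting back, and invoking the proposition proved just above the theorem (a minimal connecting label $i$ forces $f_{i+1},\dots,f_{i+(n-\ell(\beta))}$ to connect the same two subcomponents) together with the persistence half to recover the original minimal label inside $B(\lambda)_n$.

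The main obstacle is exactly this last reduction: the push-down is transparent only when the last maximal horizontal band of the witnessing weight carries both values $n$ and $n+1$, the non-commutation of $e_n$ with $f_{n-1}$ is genuine, and one must rule out (or circumvent via $\operatorname{EVAC}$) the possibility that the sole connection between two surviving subcomponents uses a boundary label $i\in\{n-1,n\}$; this is where the combinatorics of maximal horizontal bands has to be handled with care. Once the identity $Skeleton(\lambda)_n=\bigl(Skeleton(\lambda)_{n+1}\bigr)\big|_{\ell(DesComp)\le n}$ is secured, the theorem is immediate: for $n\ge S$ the vertex set is fixed and the finitely many minimal labels are constant, so $Skeleton(\lambda)_n=Skeleton(\lambda)_S$; and for $n<S$, downward induction from $S$ yields the stated induced-subgraph description.
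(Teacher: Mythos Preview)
Your inductive framework --- identifying the vertex set, reducing to showing that $Skeleton(\lambda)_n$ is the induced subgraph of $Skeleton(\lambda)_{n+1}$ on $\{P:\ell(DesComp(P))\le n\}$, and splitting into the easy persistence half and the hard descent half --- is sound and is genuinely different from the paper's route. The paper does not induct on $n$ at all: it fixes two subcomponents $B(T_\alpha),B(T_\beta)$, supposes the \emph{first} edge between them appears at level $N+k$ with $N=\max(\ell(\alpha),\ell(\beta))$, and then performs a direct case analysis of how $f_i$ can alter the division of a weight into blocks (Cases~1--3 on the position of the modified entry relative to the neighbouring horizontal bands) to force $k=0$. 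So the paper proves a sharper local statement (the minimal witness already lives at the smallest level where both subcomponents exist), from which both halves of your induction step would follow at once.

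The gap in your argument is that the two ``delicate cases'' you name are independent obstacles, and your $\operatorname{EVAC}$ fallback only addresses the first. Transporting through $\operatorname{EVAC}_{n+1}$ indeed converts a boundary label $i\in\{n-1,n\}$ into a small label $1$ or $2$, curing the non-commutation of $e_n$ with $f_i$. But it does nothing to guarantee that $e_n$ preserves the minimal parsing on the $\operatorname{EVAC}$'ed side: the last maximal horizontal band of $\operatorname{EVAC}(T_1)$ is the complement of the \emph{first} band of $T_1$, and there is no reason this cannot also be monochromatic of value $n+1$ (concretely, whenever $\gamma_1=\alpha_1$). In that situation your push-down via $e_n$ fails on both sides, and you are left with no mechanism to produce a witness in $B(\lambda)_n$. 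A small example already shows $e_n$ genuinely leaving the subcomponent: for $\lambda=(2,1)$, $n=2$, the tableau $\young(23,3)$ lies in $B(T_{(1,2)})$, yet $e_2$ sends it to $\young(22,3)\in B(T_{(2,1)})$. There is also a secondary wrinkle you gloss over: ``transporting back'' after the push-down means applying $\operatorname{EVAC}_n$, not $\operatorname{EVAC}_{n+1}$, so the label is sent to $n-j$ rather than $(n+1)-j$; recovering the \emph{same} minimal label then requires an extra argument you have not supplied. To close the gap you would in effect need a structural analysis of how $e_n$ interacts with the boundary of the last band --- which is precisely the case analysis the paper carries out for $f_i$ directly.
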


\begin{proof}
	Let's first consider two random subcomponents $B(T_\alpha)$, $B(T_\beta)$, for $T_\alpha,T_\beta$ of the same shape $\lambda$, with respective weight and minimal parsing of type $\alpha$ and $\beta$, for incomparable descent compositions $\alpha,\beta$. Both subcomponents occur in all $B(\lambda)_n$ for $n\geq N=\max(\ell(\alpha),\ell(\beta))$.\\ 
	
	Suppose that there exists a minimal value $k\in\N$ such that there exists an edge labelled $i$ from $B(T_\alpha)$ into $B(T_\beta)$ in $B(\lambda)_{N+k}$. There are then two tableaux $T_{\gamma^{(1)}}\in B(T_\alpha)$ and $T_{\gamma^{(2)}}\in B(T_\beta)$ such that $\alpha\preccurlyeq \gamma^{(1)}$, $\beta\preccurlyeq \gamma^{(2)}$ and $f_i(T_{\gamma^{(1)}})=T_{\gamma^{(2)}}$.\\

	We then have
	\begin{center}
	$\alpha\preccurlyeq \gamma^{(1)} = (\gamma_1,\gamma_2,\ldots,\gamma_{i-1},\gamma_i,\gamma_{i+1},\gamma_{i+1},\ldots,\gamma_{N+k})$ \\
	$\beta\preccurlyeq \gamma^{(2)} = (\gamma_1,\gamma_2,\ldots,\gamma_{i-1},\gamma_i-1,\gamma_{i+1}+1,\gamma_{i+1},\ldots,\gamma_{N+k})$.
	\end{center}
	
	The crystal operator $f_i$ changes exactly one entry $i$ into an entry $i+1$ in $T_{\gamma^{(1)}}$, but changes the minimal parsing. Therefore, it must act exactly on two maximal horizontal bands of $T_{\gamma^{(1)}}$, modifying the maximal horizontal band containing that entry $i$ and the one containing $(i+1)$'s in $T_{\gamma^{(1)}}$. Moreover, the entries $i$ and $i+1$ cannot be part of the same maximal horizontal band, otherwise the minimal parsing would not be changed. \\
	
	Recall that we can express a decomposition of $\gamma^{(1)}$ and $\gamma^{(2)}$ into subsets of parts, respectively summing to the parts of $\alpha$ and $\beta$, to represent the minimal parsing respectively of $T_{\gamma^{(1)}}$ and $T_{\gamma^{(2)}}$. Since they differ in exactly two maximal horizontal bands, then the decompositions of $\gamma^{(1)}$ and $\gamma^{(2)}$ have all subsets equal, except for those containing the $i^{\text{th}}$ and $(i+1)^{\text{th}}$ parts. Suppose then that $j_1<j_2<\ldots$ give the position of the last part of each subset, this gives the following decompositions, with potentially an added separation $|$ before the $i^{\text{th}}$ part and/or after the $(i+1)^{\text{th}}$ part, in $\gamma^{(1)}$ and/or $\gamma^{(2)}$.
	
	\begin{center}
	$\alpha\preccurlyeq \gamma^{(1)} = (\gamma_1,\ldots,\gamma_{j_1}|\gamma_{j_1+1},\ldots,\gamma_{j_2}|\ldots|\gamma_{j_\ell+1},\ldots,\gamma_i|\gamma_{i+1},\ldots, \gamma_{\ell+1}|\ldots, \gamma_{N+k})$ \\
	$\beta\preccurlyeq \gamma^{(2)} = (\gamma_1,\ldots,\gamma_{j_1}|\gamma_{j_1+1},\ldots,\gamma_{j_2}|\ldots|\gamma_{j_\ell+1},\ldots,\gamma_i-1|\gamma_{i+1}+1,\ldots, \gamma_{\ell+1}|\ldots, \gamma_{N+k})$.
	\end{center}

	Now, since $k$ is minimal by hypothesis, then there cannot be equal subsets with more than one part, because otherwise there exists two tableaux $T_{{\gamma^{(1)}}'}, T_{{\gamma^{(2)}}'}$ respectively in $B(T_\alpha)$ and $B(T_\beta)$ which have weights of smaller length that $N+k$ obtained by summing parts of equal subsets, with $T_{{\gamma^{(1)}}'} \xrightarrow{j} T_{{\gamma^{(2)}}'}$ in $B(\lambda)_n$ for $n<N+k$ and $j\leq i$.\\
	
	For the same reason, there cannot be more than two parts in the subsets containing the $i^{\text{th}}$ and $(i+1)^{\text{th}}$ parts in $\gamma^{(1)}$ and $\gamma^{(2)}$.\\
	
	We then have that the decomposition above is coarser, with again potentially an additionnal separation $|$ before the $i^{\text{th}}$ part and/or after the $(i+1)^{\text{th}}$ part, in $\gamma^{(1)}$ and/or $\gamma^{(2)}$:
	\begin{center}
		$\alpha\preccurlyeq \gamma^{(1)} = (\gamma_1|\gamma_2|\ldots|\gamma_{i-1},\gamma_i|\gamma_{i+1},\gamma_{i+2}|\ldots|\gamma_{N+k})$ \\
		$\beta\preccurlyeq \gamma^{(2)} = (\gamma_1|\gamma_2|\ldots|\gamma_{i-1},\gamma_i-1|\gamma_{i+1}+1,\gamma_{i+2}|\ldots|\gamma_{N+k})$.
	\end{center}

	The different decompositions into subsets of the parts in posititon $i-1$ to $i+1$ are given below for $\gamma^{(1)}$, the ones for $\gamma^{(2)}$ are equivalent.
	
	\begin{center}
	$\gamma_{i-1}|\gamma_i|\gamma_{i+1}|\gamma_{i+2}$ \quad
	 $\gamma_{i-1},\gamma_i|\gamma_{i+1},\gamma_{i+2}$ \quad
	   $\gamma_{i-1},\gamma_i|\gamma_{i+1}|\gamma_{i+2}$ \quad $\gamma_{i-1}|\gamma_i|\gamma_{i+1},\gamma_{i+2}$.
	\end{center}	

	Therefore $k\leq 2$.\\
	
	Let's now study the different possibilities, depending on the position of the cell $i$ modified by $f_i$, in relationship to the (non-maximal) horizontal bands containing the entries $i-1,i,i+1$ and $i+2$.\\
	
	As we have seen, the entries $i$ and $i+1$ have to be in different maximal horizontal bands, otherwise the minimal parsing is preserved by $f_i$. Then the $i$'s appear at the end of their maximal horizontal band, and the $(i+1)$'s, at the start of theirs. We call the specific entry $i$ modified by $f_i$ in $T_{\gamma^{(1)}}$ \textit{the modified entry $i$}. All entries considered are in $T_{\gamma^{(1)}}$. We consider how the modification of one entry $i$ changes the division of $\gamma^{(1)}$ into subsets to get that of $\gamma^{(2)}$.\\
	
	Recall that the head of a (non-maximal) horizontal band is its northeastmost cell, and its tail, its southwestmost cell.\\

	\noindent \underline{Case 1 :} If the modified entry $i$ lays on a row of index strictly smaller than that of the tail of the horizontal band of the $i+1$'s, and weakly greater than that of the head of the horizontal band of the $i+1$'s, then the divisions in the corresponding weights $\gamma^{(1)}$ and $\gamma^{(2)}$ are in the same positions, so we say the divisions are preserved. This is because the change of that single entry $i$ does not interfere with the entries $i-1$ or $i+2$.\\
	
	\noindent \underline{Case 2 :} If the modified entry $i$ lays on a row of index weakly greater than that of the tail of the horizontal band of the $i+1$'s, then there are three cases to consider. 
	\begin{itemize}[itemsep=0ex]
		\item If the modified entry $i$ is not the tail of the horizontal band of entries $i$, then the divisions are preserved.
		\item If the modified entry $i$ is the tail of the horizontal band of entries $i$, and the next entry $i$ of the horizontal band is southwest of the head of the horizontal band of the $i-1$'s, then the divisions are preserved.
		\item If the modified entry $i$ is the tail of the horizontal band of entries $i$, and the next entry $i$ of the horizontal band is weakly northeast of the head of the horizontal band of the $i-1$'s, then if there is a division between the $(i-1)^{\text{th}}$ and $i^{\text{th}}$ parts in $\gamma^{(1)}$, then it is removed in $\gamma^{(2)}$. All other divisions are preserved. 
	\end{itemize}

	\noindent \underline{Case 3 :} If the modified entry $i$ is weakly northeast of the head of the horizontal band of the $i+1$'s, then there are similarly three cases to consider. 
	\begin{itemize}[itemsep=0ex]
		\item If the modified entry $i$ is not the head of the horizontal band of entries $i$, then the divisions are preserved.
		\item If the modified entry $i$ is the head of the horizontal band of entries $i$, and the tail of the horizontal band of the $i+2$'s is to its northwest, then the divisions are preserved.
		\item If the modified entry $i$ is the head of the horizontal band of entries $i$, and the tail of the horizontal band of the $i+2$'s is weakly to its southwest, then if there is no division between the $(i+1)^{\text{th}}$ and $(i+2)^{\text{th}}$ parts in $\gamma^{(1)}$, then it is added in $\gamma^{(2)}$. All other divisions are preserved. 
	\end{itemize}

	There are then very limited cases when a division is either added or removed, and otherwise divisions are preserved. Let's then consider what transitions are possible from the possible configurations of $\gamma^{(1)}$.\\
	
	Let's start with the configuration $(\gamma_1|\ldots|\gamma_{i-1}|\gamma_i|\gamma_{i+1}|\gamma_{i+2}|\ldots|\gamma_{N+k})$. Either divisions are preserved, or the division between the $(i-1)^{\text{th}}$ and $i^{\text{th}}$ parts is removed, to get either configurations below in $\gamma^{(2)}$.
	
	\begin{center}
		$(\gamma_1|\ldots|\gamma_{i-1}|\gamma_i-1|\gamma_{i+1}+1|\gamma_{i+2}|\ldots|\gamma_{N+k})$ OR $(\gamma_1|\ldots|\gamma_{i-1},\gamma_i-1|\gamma_{i+1}+1|\gamma_{i+2}|\ldots|\gamma_{N+k})$.
	\end{center}

	Let's now consider the configuration $(\gamma_1|\ldots|\gamma_{i-1},\gamma_i|\gamma_{i+1},\gamma_{i+2}|\ldots|\gamma_{N+k})$. Either divisions are preserved, or the division between the $(i+1)^{\text{th}}$ and $(i+2)^{\text{th}}$ parts can be added, to get either configurations below in $\gamma^{(2)}$.
	
	\begin{center}
		$(\gamma_1|\ldots|\gamma_{i-1},\gamma_i-1|\gamma_{i+1}+1,\gamma_{i+2}|\ldots|\gamma_{N+k})$ OR $(\gamma_1|\ldots|\gamma_{i-1},\gamma_i-1|\gamma_{i+1}+1|\gamma_{i+2}|\ldots|\gamma_{N+k})$.
	\end{center}
	However, in these two cases, the $(i-1)^{\text{th}}$ and $i^{\text{th}}$ parts can be summed (in $\gamma^{(1)}$ and $\gamma^{(2)}$) to retreive valid weights of smaller length, so they must be rejected.\\
	
	Let's now consider the configuration $(\gamma_1|\ldots|\gamma_{i-1},\gamma_i|\gamma_{i+1}|\gamma_{i+2}|\ldots|\gamma_{N+k})$. Divisions can only be preserved here, to get the configuration below in $\gamma^{(2)}$.
	
	\begin{center}
		$(\gamma_1|\ldots|\gamma_{i-1},\gamma_i-1|\gamma_{i+1}+1|\gamma_{i+2}|\ldots|\gamma_{N+k})$.
	\end{center}

	Similarly as for the previous configuration, the $(i-1)^{\text{th}}$ and $i^{\text{th}}$ parts can be added (in $\gamma^{(1)}$ and $\gamma^{(2)}$) to retreive weights of smaller length, so this must be rejected.\\
	
	Let's finally consider the configuration $(\gamma_1|\ldots|\gamma_{i-1}|\gamma_i|\gamma_{i+1},\gamma_{i+2}|\ldots|\gamma_{N+k})$. Either divisions are preserved, or the division between the $(i+1)^{\text{th}}$ and $(i+2)^{\text{th}}$ parts is added, to get either configurations below in $\gamma^{(2)}$.
	
	\begin{center}
		$(\gamma_1|\ldots|\gamma_{i-1}|\gamma_i-1|\gamma_{i+1}+1,\gamma_{i+2}|\ldots|\gamma_{N+k})$ OR $(\gamma_1|\ldots|\gamma_{i-1}|\gamma_i-1|\gamma_{i+1}+1|\gamma_{i+2}|\ldots|\gamma_{N+k})$.
	\end{center}

	In the first configuration above, the $(i+1)^{\text{th}}$ and $(1+2)^{\text{th}}$ parts can be added (in $\gamma^{(1)}$ and $\gamma^{(2)}$) to retreive weights of smaller length, so this configuration must be rejected. The second one is valid. \\
	
	There are then only three possible configurations for the divisions in $\gamma^{(1)} \xrightarrow{i} \gamma^{(2)}$, such that $k$ is minimal and the minimal parsing is modified by $f_i$, going from $T_{\gamma^{(1)}}$ to $T_{\gamma^{(2)}}$. What is most important for us here is to note that they force $k=0$, and that $|\ell(\alpha)-\ell(\beta)|\in\{0,1\}$.\\
	
	Note that there is a special case to consider in cases where $\gamma_i=1$, with $\gamma_{i+1}\geq 1$ or $\gamma_{i+1}=0$. The second case is easy since then the maximal horizontal bands are preserved. In the first case, the only possible configurations are the following.
	
	\begin{center}
		$(\gamma_1|\ldots|\gamma_{i-1}|1|\gamma_{i+1},\gamma_{i+2}|\ldots|\gamma_{N+k}) \xrightarrow{i} (\gamma_1|\ldots|\gamma_{i-1},0|\gamma_{i+1}+1|\gamma_{i+2}|\ldots|\gamma_{N+k})$ OR $(\gamma_1|\ldots|\gamma_{i-1}|1|\gamma_{i+1}|\gamma_{i+2}|\ldots|\gamma_{N+k}) \xrightarrow{i} (\gamma_1|\ldots|\gamma_{i-1},0|\gamma_{i+1}+1|\gamma_{i+2}|\ldots|\gamma_{N+k})$.
	\end{center}

	In the second configuration, we still get $k=0$. In the first one, these configurations are only possible if the only entry $i$ is northeast of both the head of the horizontal band of the $i+1$'s and the tail of the $i+2$'s (in order to break their maximal horizontal band). However, $f_i$ cannot change this entry, since the corresponding parenthesis sequence (for the parenthesis rule) will give $(\ldots()$, with at least one parenthesis $($ to be paired with the parenthesis $)$ of the entry $i$, so $f_i$ must be null and the first configurations cannot occur. Then, in all cases, $k=0$.\\

	Therefore, if there exist a minimal edge labelled $i$ between two subcomponents $B(T_\alpha)$ and $B(T_\beta)$, it occurs in $B(\lambda)_N$ for $N=\max(\ell(\alpha),\ell(\beta))$, and in all $B(\lambda)_n$ for $n\geq N$. Moreover, edges only occur between subcomponents associates to descent compositions which have equal length or which lengths differ only by $1$.\\

	Now, $B(\lambda)_S$ contains all tableaux of shape $\lambda$ and filling at most $S$. Since $S$ is the maximal number of parts in descent compositions for $\lambda$, then all subcomponents associated to quasisymmetric functions $F_\alpha$ occur in $B(\lambda)_S$, with at least one tableau (if $\ell(\alpha)=S$). By the above result, all minimal edges will then also occur in $B(\lambda)_S$.\\
	
	For all $n\geq S$, all subcomponents occur, potentially with more tableaux.
	For $n<S$, then some subcomponents will be missing, but the minimal edges between occuring subcomponents will also occur by the previous result, so the obtained skeleton is the induced subgraph of $Skeleton(\lambda)_S$ containing the standard tableaux with descent composition of length at most $n$ as vertices. This gives the wanted result.

\end{proof}

We can define $Skeleton(\lambda) = Skeleton(\lambda)_S$, where $S$ is the maximal length of a descent composition for $\lambda$. Then $Skeleton(\lambda)$ is also the underlying structure of $B(\lambda)$.\\
It is a corollary of the proof that
\begin{cor}
	There are edges in $Skeleton(\lambda)$ only between standard tableaux whose number of descents differ by at most $1$.
\end{cor}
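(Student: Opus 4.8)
The plan is to deduce this statement immediately from the case analysis already carried out in the proof of Theorem~\ref{prop:SkeletonS}. First I would recall the dictionary between descents and descent compositions: by the correspondence used just before Proposition~\ref{prop:bijTabStdTabBandesMax}, a standard tableau of shape $\lambda$ with descent composition $\alpha$ has exactly $\ell(\alpha)-1$ descents. Hence, if $std(T_\alpha)$ and $std(T_\beta)$ are joined by an edge of $Skeleton(\lambda)$, showing that their numbers of descents differ by at most $1$ is the same as showing $|\ell(\alpha)-\ell(\beta)|\leq 1$.

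Next I would recall what an edge of $Skeleton(\lambda)$ encodes. By construction it records a minimal-index crystal operator $f_i$ carrying a tableau $T_{\gamma^{(1)}}\in B(T_\alpha)$ to a tableau $T_{\gamma^{(2)}}\in B(T_\beta)$ while changing the minimal parsing, with $\alpha\preccurlyeq\gamma^{(1)}$, $\beta\preccurlyeq\gamma^{(2)}$ and $\gamma^{(2)}=\gamma^{(1)}+(b_{i+1}-b_i)$. This is precisely the situation dissected in the proof of Theorem~\ref{prop:SkeletonS}: working with the smallest maximal entry $N+k$ in which such an edge can occur, the analysis of the position of the modified cell relative to the horizontal bands of the entries $i-1,i,i+1,i+2$ (Cases 1--3, together with the special case $\gamma_i=1$) leaves only three admissible transitions $\gamma^{(1)}\xrightarrow{i}\gamma^{(2)}$, and each of them forces $k=0$ together with $|\ell(\alpha)-\ell(\beta)|\in\{0,1\}$. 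Combining this bound with the dictionary from the first step gives the corollary.

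Since all the real work already lives in the proof of Theorem~\ref{prop:SkeletonS}, there is essentially no obstacle here: the only point requiring care is to keep track that ``number of descents'' equals ``length of the descent composition minus one'', so that the bound $|\ell(\alpha)-\ell(\beta)|\in\{0,1\}$ obtained there is literally a bound on the difference of the two descent counts.
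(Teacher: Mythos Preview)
Your proposal is correct and follows exactly the paper's approach: the paper states this result as ``a corollary of the proof'' of Theorem~\ref{prop:SkeletonS}, relying on the explicit conclusion there that the admissible configurations force $|\ell(\alpha)-\ell(\beta)|\in\{0,1\}$, together with the dictionary between descent count and length of the descent composition.
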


\begin{conj}
	Let $H_S$ be the induced subgraphs of $Skeleton(\lambda)$ whose vertices are the standard tableaux with descent compositions having $s$ parts. Then $H_s$ is either a
	\begin{itemize}[itemsep=0ex]
		\item Disjoint union of singleton(s), or
		\item Disjoint union of chain(s), or
		\item Disjoint union of even cyle(s) with, or without, two extra attached vertices giving the source(s) and sink(s). 
	\end{itemize}
	Multiple edges occur only between such induced subgraphs associated to different descent composition lengths $s$.
	\label{conj:structureSubGraphsSkeleton} 
\end{conj}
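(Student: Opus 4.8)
The plan is to analyse the edges of $Skeleton(\lambda)$ that lie inside a single $H_s$, introduce a grading that forces acyclicity and parity, and then bound the local degrees so that the three listed shapes become the only possibilities. First I would extract from the proof of Theorem~\ref{prop:SkeletonS} the precise form of the within-$H_s$ edges: in the case analysis there, the only configuration $\gamma^{(1)}\xrightarrow{\,i\,}\gamma^{(2)}$ with $\ell(\alpha)=\ell(\beta)$ is the one in which both decompositions into subsets of parts are all singletons, and together with $k=0$ this forces $\gamma^{(1)}=\alpha$, $\gamma^{(2)}=\beta$, $f_i(T_\alpha)=T_\beta$. Hence an edge of $H_s$ runs from $std(T_\alpha)$ to $std(T_\beta)$ exactly when $\beta=(\alpha_1,\dots,\alpha_{r-1},\alpha_r-1,\alpha_{r+1}+1,\alpha_{r+2},\dots,\alpha_s)$ for some $r$ with $\alpha_r\geq 2$ for which $f_r$ splits the $r$-th and $(r+1)$-th maximal horizontal bands of the source $T_\alpha$ (where the $i$-th maximal band is filled entirely by entries $i$); its label is the minimal such $r$ and is determined by $(\alpha,\beta)$. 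In particular two vertices of $H_s$ carry at most one edge, of fixed orientation, and no bigon; as an antiparallel pair of edges can only arise from a length-decreasing edge and a length-increasing edge pointing at one another, such pairs are confined to $H_s,H_{s\pm 1}$, which already gives the last sentence of the statement.

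Next I would set $\phi(\alpha)=\sum_j j\,\alpha_j$. Along a unit move $\alpha\to\beta$ as above, $\phi(\beta)-\phi(\alpha)=-r+(r+1)=1$, so $H_s$ is acyclic as an oriented graph, any two directed paths with common endpoints have equal length, and summing the increments $\pm 1$ around a closed walk of the underlying undirected graph shows that every undirected cycle uses equally many forward and backward edges, hence has even length. This gives the word ``even'', and with acyclicity it shows that any cycle in a component of $H_s$ has a unique local source and a unique local sink.

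The remaining, and I expect hardest, point is to bound degrees: every vertex $std(T_\alpha)$ of $H_s$ should have in-degree $\leq 2$ and out-degree $\leq 2$, and never both $=2$. For the out-degree, the out-edges are indexed by positions $r$ where a single $f_r$ unlinks exactly two consecutive bands of $T_\alpha$; by the geometric trichotomy in the proof of Theorem~\ref{prop:SkeletonS} (the entry moved by $f_r$ must be the tail of the band of $r$'s lying weakly northeast of the head of the band of $(r-1)$'s, or its head lying weakly southwest of the tail of the band of $(r+2)$'s) this happens only where two consecutive bands of $T_\alpha$ barely fail to merge, and one must verify that so rigid a tableau has at most two such positions, realizability constraints as in Proposition~\ref{prop:conditionsOfDescentCompositionsAndPartitions} further cutting down which of these actually produce a vertex of $H_s$. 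The in-degree bound should follow formally by applying $\operatorname{EVAC}$, which reverses descent compositions (hence preserves their length) and, being a crystal anti-automorphism respecting the decomposition into subcomponents, induces an anti-automorphism of $Skeleton(\lambda)$ restricting to one of $H_s$, so that $\operatorname{indeg}(v)=\operatorname{outdeg}(\operatorname{EVAC}(v))$; and a vertex of out-degree $2$ carries a doubly critical local band pattern which the same trichotomy should exclude in combination with in-degree $2$. Turning this geometric bookkeeping into statements uniform in $\lambda$ and $\alpha$ is the main obstacle, and it may require constraints on $\lambda$ beyond those already recorded, or a more careful final step if the bounds are attained beyond the elementary ``diamond''.

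To finish I would invoke the elementary fact that a finite oriented graph in which every vertex has in- and out-degree at most $2$, never both $2$, and in which all directed paths between a fixed pair of vertices have a common length, has each component of its underlying graph equal to an isolated vertex, a path, an even cycle, or an even cycle with a pendant attached at its unique local source and/or its unique local sink — those pendants being the global source and sink of the component: the ``not both $2$'' hypothesis forbids chaining two branch vertices, while the equal-length hypothesis forces the two out-branches of a branch vertex to reconnect, with matching lengths, at a single branch vertex, which leaves exactly the listed shapes. Applying this to each $H_s$ completes the proof.
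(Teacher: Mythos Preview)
This statement is labelled a \emph{conjecture} in the paper: no proof is given there, only computational verification for $|\lambda|\le 6$ and two illustrative figures. So there is no paper proof to compare against; what follows is an assessment of your attempt as a proof of an open statement.

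Your first two steps are sound. The characterisation of within-$H_s$ minimal edges as unit moves $\alpha\to\beta=(\alpha_1,\ldots,\alpha_r-1,\alpha_{r+1}+1,\ldots,\alpha_s)$ on source tableaux, extracted from the $k=0$, $\ell(\alpha)=\ell(\beta)$ branch of the case analysis in the proof of Theorem~\ref{prop:SkeletonS}, is correct; and the grading $\phi(\alpha)=\sum_j j\,\alpha_j$ does increase by exactly $1$ along each such edge, so $H_s$ is acyclic as a directed graph and every undirected cycle has even length. The last sentence of the conjecture (multiple edges only between different $H_s$) also follows as you say.

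The argument collapses at the degree bound. Your key hypothesis ``in-degree $\le 2$, out-degree $\le 2$, and never both $=2$'' is already false in the paper's own Figure~\ref{fig:evenCycles}(a), $Skeleton(4,1,1)$, in which every standard tableau has three descent parts, so the entire skeleton equals $H_3$. The vertex with descent composition $(2,2,2)$ receives edges from the vertices with compositions $(3,1,2)$ and $(2,3,1)$ and emits edges to those with compositions $(2,1,3)$ and $(1,3,2)$: in-degree and out-degree are both $2$. The resulting component is three $4$-cycles glued along edges, with a pendant at the global source and one at the global sink --- not a single even cycle with at most two pendants. Thus the hypothesis you feed into your final graph-theoretic lemma is simply unavailable, and that lemma's conclusion is too restrictive to cover the shapes that actually occur. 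The conjecture's phrase ``disjoint union of even cycle(s)'' is evidently meant loosely (the paper's own caption for this figure says ``union of even cycles''), so any proof must allow the cycles to share edges and vertices; your framework would need a substantial revision at precisely the step you flagged as hardest.
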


This has been verified for all partitions $\lambda\vdash m$ with $m\leq6$. Figure~\ref{fig:evenCycles} illustrates different cases of the conjecture.\\ 

\begin{figure}
	\begin{subfigure}[b]{0.47\textwidth}
		\Yboxdim{0.7cm}
		\centering
			\resizebox{0.44\textwidth}{!}{
				\begin{tikzpicture}[scale=0.6]
				\tikzset{myptr/.style={decoration={markings,mark=at position 1 with {\arrow[scale=3,>=stealth]{>}}},postaction={decorate}}}
				\tikzstyle{every node}=[font=\Large]
				\begin{pgfonlayer}{nodelayer}
				\node [style=none] (0) at (0, 15) {\Yfillcolour{yellow} \young(1234,5,6)};
				\node [style=none] (1) at (0, 10) {\Yfillcolour{cyan} \young(1235,4,6)};
				\node [style=none] (2) at (-5, 5) {\Yfillcolour{magenta} \young(1236,4,5)};
				\node [style=none] (3) at (5, 5) {\Yfillcolour{blue!70}\young(1245,3,6)};
				\node [style=none] (4) at (0, 0) {\Yfillcolour{olive!70}\young(1246,3,5)};
				\node [style=none] (5) at (10, 0) {\Yfillcolour{magenta!50}\young(1345,2,6)};
				\node [style=none] (6) at (-5, -5) {\Yfillcolour{magenta}\young(1256,3,4)};
				\node [style=none] (7) at (5, -5) {\Yfillcolour{blue!70}\young(1346,2,5)};
				\node [style=none] (8) at (0, -10) {\Yfillcolour{cyan}\young(1356,2,4)};
				\node [style=none] (9) at (0, -15) {\Yfillcolour{yellow}\young(1456,2,3)};
				\end{pgfonlayer}
				\begin{pgfonlayer}{edgelayer}
				\draw [->, ultra thick] (0) to node[left]{1} (1);
				\draw [->, ultra thick] (1) to node[above left]{2} (2);
				\draw [->, ultra thick] (1) to node[above right]{1} (3);
				\draw [->, ultra thick] (2) to node[left]{1} (4);
				\draw [->, ultra thick] (3) to node[below right]{2} (4);
				\draw [->, ultra thick] (3) to node[right]{1} (5);
				\draw [->, ultra thick] (4) to node[right]{2} (6);
				\draw [->, ultra thick] (4) to node[above right]{1} (7);
				\draw [->, ultra thick] (5) to node[below right ]{2} (7);
				\draw [->, ultra thick] (6) to node[above right]{1} (8);
				\draw [->, ultra thick] (7) to node[above right]{2} (8);
				\draw [->, ultra thick] (8) to node[right]{2} (9);
				\end{pgfonlayer}
				\end{tikzpicture}}
	\vspace{1.5em}
	\caption{$Skeleton(4,1,1)$, with all tableaux having two descents and forming a union of even cycles with extra vertices giving the source and sink}
	\end{subfigure} \hfill
	\begin{subfigure}[b]{0.51\textwidth}
			\Yboxdim{0.7cm}
			\centering
				\resizebox{0.99\textwidth}{!}{
					\begin{tikzpicture}[scale=0.6]
					\tikzset{myptr/.style={decoration={markings,mark=at position 1 with {\arrow[scale=3,>=stealth]{>}}},postaction={decorate}}}
					\tikzstyle{every node}=[font=\Large]
					\begin{pgfonlayer}{nodelayer}
					\node [style=none] (0) at (0, 20) {\Yfillcolour{yellow} \young(123,45,6)};
					\node [style=none] (1) at (-5, 15) {\Yfillcolour{cyan} \young(123,46,5)};
					\node [style=none] (2) at (5, 15) {\Yfillcolour{magenta} \young(125,34,6)};
					\node [style=none] (3) at (-10, 10) {\Yfillcolour{orange}\young(124,36,5)};
					\node [style=none] (4) at (10, 10) {\Yfillcolour{orange}\young(126,34,5)};
					\node [style=none] (5) at (-5, 5) {\Yfillcolour{magenta}\young(134,26,5)};
					\node [style=none] (6) at (5, 5) {\Yfillcolour{cyan}\young(126,35,4)};
					\node [style=none] (7) at (15, 5) {\Yfillcolour{olive!60}\young(124,35,6)};
					\node [style=none] (8) at (-15, 0) {\Yfillcolour{black!30}\young(125,36,4)};
					\node [style=none] (9) at (0, 0) {\Yfillcolour{yellow}\young(136,25,4)};
					\node [style=none] (10) at (10, 0) {\Yfillcolour{magenta!40}\young(134,25,6)};
					\node [style=none] (11) at (-10, -5) {\Yfillcolour{blue!60}\young(135,26,4)};
					\node [style=none] (12) at (5, -5) {\Yfillcolour{blue!60}\young(135,24,6)};
					\node [style=none] (13) at (-5, -10) {\Yfillcolour{magenta!40}\young(145,26,3)};
					\node [style=none] (14) at (0, -10) {\Yfillcolour{black!30}\young(136,24,5)};
					\node [style=none] (15) at (0, -15) {\Yfillcolour{olive!60}\young(146,25,3)};
					\end{pgfonlayer}
					\begin{pgfonlayer}{edgelayer}
					\draw [->, ultra thick] (0) to node[above left]{2} (1); 
					\draw [->, ultra thick] (0) to node[above right]{1} (2);
					\draw [->, ultra thick] (1) to node[above left]{1} (3);
					\draw [->, ultra thick] (2) to node[below left]{2} (4);
					\draw [->, ultra thick] (3) to node[above right]{2} (5);
					\draw [->, ultra thick] (4) to node[above left]{2} (6);
					\draw [->, ultra thick] (5) to node[above right]{2} (9);
					\draw [->, ultra thick] (6) to node[above left]{1} (9);
					
					\draw [->, ultra thick] (7) to node[above left]{1} (10); 
					\draw [->, ultra thick] (10) to node[above left]{2} (12);
					\draw [->, ultra thick] (12) to node[above left]{3} (14);
					
					\draw [->, ultra thick] (8) to node[below left]{1} (11); 
					\draw [->, ultra thick] (11) to node[below left]{2} (13);
					\draw [->, ultra thick] (13) to node[below left]{3} (15);
					
					\draw [->, ultra thick, bend left=40, looseness=1.5] (0) to  node[right]{1} (7); 
					\draw [->, ultra thick, bend right=50, looseness=1.6] (7) to node[right]{2} (0);
					\draw [->, ultra thick, bend left=70, looseness=2.1] (2) to node[right]{1} (12);
					\draw [->, ultra thick, bend right=80, looseness=2.2] (12) to node[right]{2} (2);
					\draw [->, ultra thick, in=-5, out=-10, looseness=2.2] (4) to node[right]{1} (14);
					\draw [->, ultra thick, in=-5, out=-10, looseness=2.4] (14) to node[right]{2} (4);
					\draw [->, ultra thick, bend right=60, looseness=1.6] (3) to node[left]{2} (8);
					\draw [->, ultra thick, bend left=40, looseness=1.5] (8) to node[left]{3} (3);
					\draw [->, ultra thick, bend right=40, looseness=1.6] (5) to node[left]{2} (11);
					\draw [->, ultra thick, bend left=20,looseness=1.5] (11) to node[above left]{3} (5);
					\draw [->, ultra thick, bend right=35, looseness=1.1] (9) to node[right]{2} (15);
					\draw [->, ultra thick, bend left=25, looseness=1] (15) to node[right]{3} (9);
					\end{pgfonlayer}
					\end{tikzpicture}}
	\caption{$Skeleton(3,2,1)$, with standard tableaux with two descents forming an even cycle, and those with three descents forming a disjoint union of chains}	
	\end{subfigure}
	\caption{Structure of induced subgraphs of $Skeleton(\lambda)$ whose vertices are standard tableaux with a fixed numer of descents \label{fig:evenCycles}}
\end{figure}
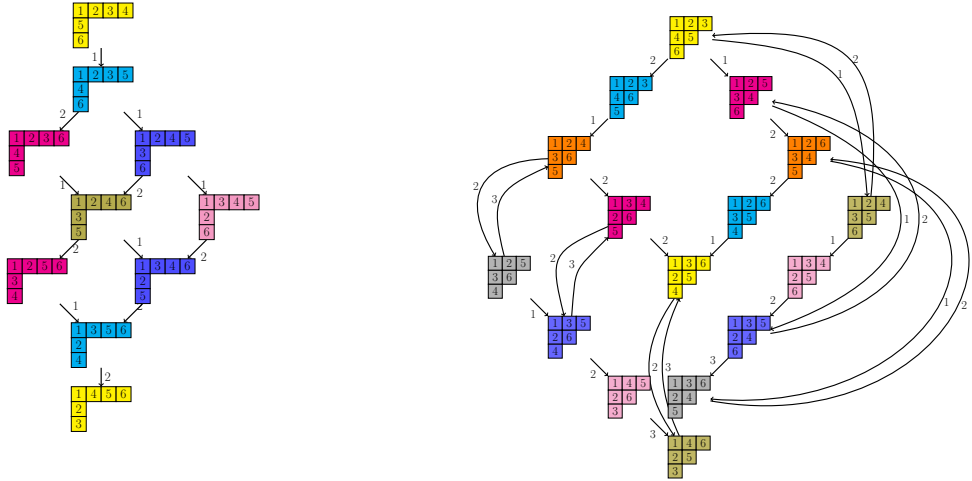

It would be interesting to study further this notion of skeleton of crystals. 
In particular, Danilov, Karzanov and Koshevoy have done so, along with studying the notion of subcrystals \cite{CrossingModel}, in the alternative crossing model for $A_{n-1}$ crystals. They introduced alternate combinatorial objects as vertices of crystals, defined crystal operators on these objects by using feasible functions, and showed that this does give an alternative model for $A_{n-1}$ crystals by using Stembridge axioms. Some results may then have connections to those found here, but the vastly different setting makes comparisons difficult. It would however be extremely interesting to further study the connections with their results.

\section{Crystal skeleton and dual equivalence graphs as relations between the plactic and coplactic monoids}
\label{section:RSKPlacticCoplactic}
\label{section:compLittGraphStdtab}

In this section, we explore the relationship between the skeleton and the dual equivalence graphs introduced by Assaf \cite{AssafDualEquivGraph}, which is another oriented graph structure on standard tableaux. 
We will see how the fundamental quasisymmetric functions can be seen as describing the relationship between the plactic and coplactic monoid, and relations between them encode dual $RSK$ equivalences. Let's start by recalling certain definitions.

\subsection{RSK algorithm, jeu de taquin, plactic and coplactic monoids in crystals}

Recall that the RSK algorithm associates to any word $w$ a pair of tableaux $(P(w),Q(w))$. 
See \cite{Fulton} or \cite{Sagan} for a full description of the algorithm. The tableau $P(w)$ is called the insertion tableau of $w$, and will have the same weight as $w$. The tableau $Q(w)$ is called the recording tableau of $w$, and is standard in this context.\\

Words then form the plactic monoid, 
with concatenation as product, and Knuth relations as equivalence relations \cite{MonoidePlaxique}.
All words in the same equivalence class in the plctic monoid are mapped onto the same insertion tableaux $P$, and its row reading word $rw(P)$ can be seen as a representative of this Knuth-equivalence class. We will take this as the definition for words to be \emph{plactically equivalent}.\\

\emph{Jeu de taquin} 
allows, among other things, to translate Knuth relations (of the plactic monoid on words) to tableaux, and to describe a crystal structure on skew tableaux: tableaux on shapes $\lambda/\mu$, where the cells of $\mu$ are blanks in $\lambda$, and other cells are filled with the usual row and column conditions. Its effect on these skew tableaux then corresponds to applying the Knuth relations to the associated reading words. \\

Starting with a skew tableau, blanks pass through non-empty cells, always preserving conditions on rows and columns. A jeu de taquin slide always starts at an inner corner, having non-empty cells to its right and under it, exchanging it successively with non-empty cells until it lies on the outer shape of $\lambda$ and no more exchanges are possible. Doing this process recursively allows one to "rectify" the tableau to a partition shape. This tableau is called the \emph{rectification} of the initial skew tableau. The rectification is unique, so the order of the slides doesn't matter.  \\

For example, the skew tableau $T$ below, of skew shape $(5,5,3)/(2,2)$, is rectified in five jeu de taquin slides, where the inner corners used for the slides are identified by red cells, and the entries moved in the slide appear in red:
\begin{center}
	$T=$ \hspace{-0.5em}$\gyoung(::!\rd;<\cdot>!\wt!\red;1;1!\black,::;2;2!\red;3!\black,;1;2;3)$ \hspace{-0.5em} $ \rightarrow$ \hspace{-1em} 
	$\gyoung(::;1;1;3,:!\rd;<\cdot>!\wt;2;2,;1!\red;2;3!\black) $ \hspace{-1em} $\rightarrow$ \hspace{-1em}
	$\gyoung(:!\rd;<\cdot>!\wt!\red;1;1!\black;3,:;2;2!\red;2!\black,;1;3) $ \hspace{-1em} $\rightarrow$
	$\gyoung(:;1;1;2;3,!\rd;<\cdot>!\wt;2;2,!\red;1;3!\black)$ \hspace{-1em} $\rightarrow$ \hspace{-0.5em}
	$\gyoung(!\rd;<\cdot>!\wt;1;1;2;3,!\red;1;2;2!\black,;3)$ \hspace{-1em} $ \rightarrow$ \hspace{-0.5em}
	$\gyoung(;1;1;1;2;3,;2;2,;3)$ \hspace{-0.5em}
	$=Rect(T)$
\end{center}

At each step the skew tableau remains semistandard and (skew) partition shaped.
Jeu de taquin slides commute with crystal operators \cite{vLee2001LRrule}, so a crystal of skew tableaux is isomorphic to the crystal where all skew tableaux have been rectified. 
In particular, the crystal of skew tableaux with the parsing of the reading words in maximal increasing factors giving exactly the rows of the tableaux, as in the introduction, is rectified to the crystal with tableaux obtained from those words through the RSK algorithm. \\

The following results give relations between crystals on words and on tableaux:

\begin{prop}[{\cite[Theorems 8.6 and 8.7]{BumpSchilling}}]
	The crystal on words of length $k$ with letters in $[n]$, noted $[n]^{\otimes k}$, 
	decomposes into a disjoint union of crystals, each isomorphic to a certain $B(\lambda)$, for $\lambda$ partitions of $k$ with length at most $n$.\\
	
	Let $x,y\in [n]^{\otimes k}$, then 
	\begin{enumerate}[itemsep=0ex]
		\item $P(f_i(x)) = f_i(P(x))$, where $f_i$ denotes a crystal operator on words or tableaux depending on the object it is applied to.
		\item If $\lambda$ is the shape of $P(x)$ and $Q(x)$, then $x$ lies in a connected component of the crystal isomorphic to $B(\lambda)$.
		\item If $P(x)=P(y)$, then $x,y$ are plactically equivalent.
		\item If $Q(x)=Q(y)$, then $x,y$ lie in the same connected component of the crystal on words.
	\end{enumerate}
	\label{prop:crystalOfTableaux}
\end{prop}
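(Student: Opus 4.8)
The plan is to obtain all four items, together with the stated decomposition, from a single structural fact: that the RSK map $w\mapsto(P(w),Q(w))$ is an isomorphism of crystals
\[
[n]^{\otimes k}\;\xrightarrow{\ \sim\ }\;\bigsqcup_{\substack{\lambda\vdash k\\ \ell(\lambda)\leq n}}B(\lambda)_n\times\mathrm{SYT}(\lambda),
\]
where the right side carries the crystal structure in which $e_i,f_i$ act on the $B(\lambda)_n$-factor only and fix the recording-tableau factor. Granting this, item~1 is exactly the assertion that crystal operators pass through the $P$-component; since each $B(\lambda)_n$ is connected, the connected components of the right side are the subcrystals $B(\lambda)_n\times\{Q\}$, each isomorphic to $B(\lambda)_n$, which gives the decomposition statement and --- because $P(w)$ and $Q(w)$ always have the same shape $\lambda$ --- items~2 and~4; and item~3 is immediate from the convention fixed earlier, that two words are plactically equivalent precisely when they have equal insertion tableaux (so ``$P(x)=P(y)$'' is the definition, and one only records in passing that Knuth moves preserve $P$, so that the terminology is consistent).

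To prove the displayed isomorphism I would induct on the length $k$, writing $[n]^{\otimes k}=[n]^{\otimes(k-1)}\otimes[n]$ and applying the inductive hypothesis to the first factor. Associativity of the tensor product, together with the crystal Pieri rule $B(\mu)_n\otimes[n]\cong\bigsqcup_{\lambda}B(\lambda)_n$, the union over partitions $\lambda$ obtained from $\mu$ by adding one box, reduces the inductive step to the \textit{insertion lemma}: the map $(T,b)\mapsto\bigl(T\leftarrow b,\ \operatorname{shape}(T\leftarrow b)/\operatorname{shape}(T)\bigr)$ is a crystal isomorphism $B(\mu)_n\otimes[n]\xrightarrow{\sim}\bigsqcup_{\lambda}B(\lambda)_n\times\{\lambda/\mu\}$, i.e.\ Schensted row-insertion commutes with $e_i$ and $f_i$, and the box it creates is a crystal invariant. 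Iterating this single-box statement along $w_1,\dots,w_k$ reassembles the created boxes into the standard recording tableau $Q(w)$ and the inserted letters into $P(w)$, which is precisely RSK; this closes the induction.

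The hard part will be the insertion lemma --- everything else is bookkeeping. I would prove it either by a direct (if fiddly) analysis of how a bumping route interacts with the signature/parenthesis rule computing $e_i$ and $f_i$, checking in each local configuration that the letter selected by the rule produces the same change of shape in the image; or, more economically, by identifying the map with the unique crystal isomorphism onto each component, for which it suffices to see that it carries highest-weight elements of $B(\mu)_n\otimes[n]$ to highest-weight tableaux of the correct shapes and to invoke rigidity of connected crystals. For item~1 in isolation there is a shorter route using tools already recalled above: a word $w$ corresponds to the skew tableau $\operatorname{sk}(w)$ whose rows are the maximal weakly increasing factors of $w$ (as in Figure~\ref{fig:crystalsOfWordsAndTableaux}), with $rw(\operatorname{sk}(w))=w$, and under this correspondence the word crystal is isomorphic to the crystal of such skew tableaux; rectifying $\operatorname{sk}(w)$ by jeu de taquin yields $P(w)$, and since jeu de taquin slides commute with crystal operators~\cite{vLee2001LRrule}, the map $w\mapsto P(w)$ commutes with all $e_i,f_i$. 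Equivalently one checks directly that the word operators respect the two elementary Knuth relations --- a finite verification on length-$3$ patterns via the parenthesis rule --- so that they descend to the plactic monoid and $f_i(P(w))=f_i\bigl(P(rw(P(w)))\bigr)=P\bigl(f_i(rw(P(w)))\bigr)=P(f_i(w))$. Once item~1 is established, item~2 follows formally: the insertion shape is constant on a connected component $C$; the source of $C$ maps to the source $1_\lambda$ of some $B(\lambda)_n$ and has weight $\lambda$, so $C\cong B(\lambda)$ by Theorem~\ref{prop:IsomCristalTab} and $P|_C$ is a bijection onto $B(\lambda)_n$. The constancy of $Q$ on components, however --- and hence item~4 together with the ``one $Q$ per component'' content of the decomposition --- genuinely requires the full insertion lemma, not just item~1.
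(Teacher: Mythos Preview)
The paper does not give its own proof of this proposition: it is stated as a citation to \cite[Theorems 8.6 and 8.7]{BumpSchilling} and used as a black box, so there is nothing in the paper to compare your argument against. Your outline is a faithful sketch of the standard proof in that reference --- reduce everything to the single ``insertion lemma'' that row-insertion of one letter commutes with $e_i,f_i$ (with the new box as a crystal invariant), then iterate to get that RSK is a crystal isomorphism onto $\bigsqcup_\lambda B(\lambda)_n\times\mathrm{SYT}(\lambda)$ --- and your alternative route to item~1 via jeu-de-taquin/Knuth compatibility is also standard and matches tools the paper has already recalled. Note too that, as you observe, item~3 is immediate in this paper's conventions: just above the proposition the author explicitly \emph{takes} ``same insertion tableau'' as the definition of plactic equivalence.

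One small caution on your presentation: you write that the constancy of $Q$ on connected components ``genuinely requires the full insertion lemma, not just item~1''. In fact item~1 alone already gives it once you know RSK is a bijection onto pairs $(P,Q)$ of the same shape (the classical Robinson--Schensted statement, which does not use crystals). Item~1 makes $P$ a morphism of crystals onto $B(\lambda)_n$; since $B(\lambda)_n$ has $|\mathrm{SSYT}(\lambda)_n|$ vertices and the fibre over each $P$ has size $|\mathrm{SYT}(\lambda)|$ independently of $P$, each connected component of $[n]^{\otimes k}$ maps bijectively to $B(\lambda)_n$ and hence is a full fibre of the $Q$-map. So item~4 and the decomposition follow from item~1 plus the bijectivity of RSK, without redoing the insertion lemma.
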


Plactically equivalent words then lie in the same position in isomorphic crystals, and the distinct recording tableaux $Q(w)$ allow us to differentiate between isomorphic components. We can then consider pairs $(P(w),Q(w))$ as the vertices of crystals of tableaux. 
In general, we only consider the first tableau of the pair, since crystal operators act only on them.\\

It is useful to know that $RSK$ preserves descents, and so descent compositions, of words in the recording tableau, as seen in the introduction:

\begin{prop}[{\cite[Theorem 10.117]{loehr2011bijective}}]
	Let $w$ be a word, and $Q(w)$ the associated recording tableau obtained through the RSK algorithm. Then \[Des(Q(w))=Des(w).\]
	\vspace{-1em}
	\label{prop:RSKPreservesDescents}
\end{prop}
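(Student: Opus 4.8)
The plan is to reduce the statement to the classical \emph{Row Bumping Lemma} governing two consecutive row insertions. Recall that, for $w = w_1 w_2 \cdots w_k$, the recording tableau $Q(w)$ is built by performing the successive Schensted row insertions $P^{(0)} = \varnothing$, $P^{(i)} = (P^{(i-1)} \leftarrow w_i)$, and labelling by $i$ the unique cell of $P^{(i)}$ that is not a cell of $P^{(i-1)}$. Since one new cell is created at each of the distinct times $1, \ldots, k$, the tableau $Q(w)$ is standard, and by the definition of descents of a standard tableau, $i \in Des(Q(w))$ precisely when the cell labelled $i+1$ lies in a strictly lower row than the cell labelled $i$. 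Thus it suffices to compare, for each $1 \leq i \leq k-1$, the row indices of the two cells created by the insertions of $w_i$ and of $w_{i+1}$.

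First I would invoke the Row Bumping Lemma: if one inserts $x$ and then $x'$ into a semistandard tableau, creating new cells $c$ then $c'$, then (a) if $x \leq x'$, the cell $c'$ lies strictly to the right of $c$ and in a row weakly above that of $c$; and (b) if $x > x'$, the cell $c'$ lies in a row strictly below that of $c$ (and weakly to its left). This is proved by induction on the number of rows the bumping routes traverse: in the top row, $x \leq x'$ forces $x'$ to be placed, or to bump, strictly to the right of where $x$ is placed (resp.\ bumped), and the two bumped entries again satisfy the same weak inequality, so one recurses on the next row; when $x > x'$ the inequality between the bumped entries stays strict and the route of $x'$ remains weakly left of that of $x$ at every row, terminating one row lower. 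The single-row base case is immediate. (Alternatively, this lemma may simply be quoted, e.g.\ from Fulton's \emph{Young Tableaux}.)

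Applying the lemma with $x = w_i$, $x' = w_{i+1}$, and $c$, $c'$ the cells labelled $i$ and $i+1$ in $Q(w)$: if $i \notin Des(w)$, i.e.\ $w_i \leq w_{i+1}$, then $c'$ is in a row weakly above $c$, hence not strictly below $c$, so $i \notin Des(Q(w))$; and if $i \in Des(w)$, i.e.\ $w_i > w_{i+1}$, then $c'$ is in a row strictly below $c$, so $i \in Des(Q(w))$. Hence $Des(w) = Des(Q(w))$. One could also first reduce to the case of a permutation by standardizing $w$: standardization preserves $Des(w)$ by construction, and it leaves $Q(w)$ unchanged, since equal letters bump only strictly larger entries and therefore replacing the $j$-th occurrence of each letter by consecutive values produces exactly the same chain of shapes $P^{(0)} \subset P^{(1)} \subset \cdots \subset P^{(k)}$; one then runs the same argument with all inequalities strict.

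The main obstacle is the Row Bumping Lemma itself — tracking how the bumping routes of two successive insertions interlace row by row, and controlling simultaneously the horizontal displacement of the two new cells and the row in which each terminates. Everything after that is bookkeeping: identifying the cells $c$ and $c'$ with the labels $i$ and $i+1$ of $Q(w)$, and translating "weakly above / strictly below" into the descent condition for standard tableaux.
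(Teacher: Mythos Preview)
Your argument is correct and is exactly the classical proof via the Row Bumping Lemma (as in Fulton or Loehr). Note that the paper does not prove this proposition at all: it simply quotes it from \cite[Theorem 10.117]{loehr2011bijective}, so there is no ``paper's own proof'' to compare against beyond the cited reference, whose proof is essentially the one you wrote.
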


All words with the same recording tableau $Q(w)$ are said to be coplactically equivalent (in the associated coplactic monoid). They all land in the same connected crystal. In particular, coplactic equivalences preserve descents by the above proposition.
	
\subsection{Relation between plactic and coplactic monoids: skeleton and dual equivalence graphs}

The relation between plactic and coplactic classes is illustrated by the well known fact that if $RSK(w)=(P(w),Q(w))$, then $RSK(w^{-1})=(Q(w),P(w))$ (see Theorem 10.112 of \cite{loehr2011bijective}). This correspondence works for any word, not only permutations, by using the $RSK$ map on biwords, where the inverse of a biword is easily computed. In this context, the recording tableau $Q(w)$ may be semistandard.\\ 
%
	
This can then be understood by linking standard tableaux of shape $\lambda\vdash m$ indexing connected crystal components $B(\lambda)$ in $[n]^{\otimes m}$, and the associated subcomponents of $B(\lambda)$. 
%
They can be seen as the inverse images of the relation in RSK, exchanging the $P,Q$-tableaux as the effect of considering $w^{-1}$: standard tableaux expand into their associated subcomponent as the subcomponents shrink to their associated standard tableau, respectively under destandardization in all posssible ways which preserve the minimal parsing, and standardization. The fundamental quasisymmetric functions $F_\alpha$ and their associated subcomponents can then be seen as representing the relation between plactic and coplactic classes. This new interpretation of these relationships establishes additional relations between connected components $B(\lambda)$ in the tensor $B(1)^{\otimes m}$.\\

Other interesting relations are often referred to as \textit{dual RSK relations}: equivalences, generally on permutations, with the following elementary transformations on letters $i-1$, $i$ and $i+1$ in a permutation, according to their relative positions:
\begin{center}
$\begin{array}{ccccccc} 
	i & i+1 & i-1 & \xleftrightarrow{i} & i-1 & i+1 & i \\
	i & i-1 & i+1 & \xleftrightarrow{i} & i+1 & i-1 & i \\
\end{array}$
\end{center}

Note that these elementary transformations do not correspond to coplactic relations, in particular since they introduce (or remove) descents.\\

Assaf introduced dual equivalence graphs to represent these dual RSK relations on permutations as actions on the standard tableaux with these permutations as reading words \cite{AssafDualEquivGraph}. They are then graphs on standard tableaux, just like our skeleton of crystals. \\


Even though they are defined very differently, these two types of oriented graph structures on standard tableaux are surprisingly similar. The associated dual graph for $\lambda=(4,3)$, as defined by Assaf, would be the one illustrated in figure~\ref{fig:dualGraphAssaf}. Its graph structure only differs from that of figure~\ref{fig:graphTabStd} by seven edges missing.

\begin{figure}[h!]
	\Yboxdim{0.7cm}
	\begin{center}
		\resizebox{0.5\textwidth}{!}{
			\begin{tikzpicture}[scale=0.6]
			\tikzset{myptr/.style={decoration={markings,mark=at position 1 with {\arrow[scale=3,>=stealth]{>}}},postaction={decorate}}}
			\tikzstyle{every node}=[font=\Large]
			\begin{pgfonlayer}{nodelayer}
			\node [style=none] (0) at (0, 8) {\Yfillcolour{yellow} \young(1234,567)};
			\node [style=none] (1) at (7, 2) {\Yfillcolour{cyan} \young(1236,457)};
			\node [style=none] (2) at (-7, 5) {\Yfillcolour{yellow} \young(1237,456)};
			\node [style=none] (3) at (0, -1) {\Yfillcolour{blue!70}\young(1256,347)};
			\node [style=none] (4) at (14, -1) {\Yfillcolour{olive!70}\young(1235,467)};
			\node [style=none] (5) at (7, -4) {\Yfillcolour{magenta!60}\young(1245,367)};
			\node [style=none] (6) at (-7, -4) {\Yfillcolour{magenta!60}\young(1257,346)};
			\node [style=none] (7) at (-14, -7) {\Yfillcolour{olive!70}\young(1247,356)};
			\node [style=none] (8) at (0, -7) {\Yfillcolour{blue!70}\young(1345,267)};
			\node [style=none] (9) at (-7, -10) {\Yfillcolour{cyan}\young(1347,256)};
			\node [style=none] (10) at (14, -10) {\young(1246,357)};
			\node [style=none] (11) at (7, -13) {\young(1346,257)};
			\node [style=none] (12) at (0, -16) {\young(1356,247)};
			\node [style=none] (13) at (-7, -19) {\young(1357,246)};
			\end{pgfonlayer}
			\begin{pgfonlayer}{edgelayer}
			\draw [<->, ultra thick, in=105, out=-15, looseness=1.25] (0) to node[above]{4} (4);
			\draw [<->, ultra thick] (1) to node[above]{6} (4);
			\draw [<->, ultra thick, in=360, out=90, looseness=1.50] (4) to node[right]{5} (0);
			\draw [<->, ultra thick, bend right=60] (2) to node[left]{3} (7);
			\draw [<->, ultra thick, in=-135, out=100, looseness=1] (7) to node[left]{4} (2);
			\draw [<->, ultra thick, bend left=90, looseness=1.5] (1) to node[right]{3} (10);
			\draw [<->, ultra thick, bend right=90, looseness=1.8] (10) to node[right]{4} (1);
			\draw [<->, ultra thick] (4) to node[above left]{3} (5);
			\draw [<->, ultra thick] (3) to node[above left ]{6} (6);
			\draw [<->, ultra thick, bend right=40, looseness=1.4] (3) to node[right]{2} (12);
			\draw [<->, ultra thick, bend left=30, looseness=1.3] (12) to node[right]{3} (3);
			\draw [<->, ultra thick] (5) to node[above]{2} (8);
			\draw [<->, ultra thick, bend right=10] (5) to node[right]{5} (10);
			\draw [<->, ultra thick, bend right=10] (10) to node[right]{6} (5);
			\draw [<->, ultra thick] (6) to node[above left]{5} (7);
			\draw [<->, ultra thick, in=160, out=180, bend right=100, looseness=2.5] (6) to node[left]{2} (13);
			\draw [<->, ultra thick, in=160, out=180, bend left=100, looseness=2.2] (13) to node[left]{3} (6);
			\draw [<->, ultra thick, bend left] (7) to node[below left]{2} (9);
			\draw [<->, ultra thick, bend right=15] (8) to node[below left]{5} (11);
			\draw [<->, ultra thick, bend right=15] (11) to node[below left]{6} (8);
			\draw [<->, ultra thick, 
			bend right] (9) to node[left]{4} (13);
			\draw [<->, ultra thick, in=-80, out=70, looseness=1,
			bend right] (13) to node[left]{5} (9);
			\draw [<->, ultra thick] (10) to node[above left]{2} (11);
			\draw [<->, ultra thick 
			] (11) to node[above left]{4} (12);
			\draw [<->, ultra thick] (12) to node[above left]{6} (13);
			\end{pgfonlayer}
			\end{tikzpicture}}
	\end{center}
	\caption{Dual equivalence graph for $\lambda=(4,3)$, as of the definition of Assaf \cite{AssafDualEquivGraph}.\label{fig:dualGraphAssaf}}
\end{figure}
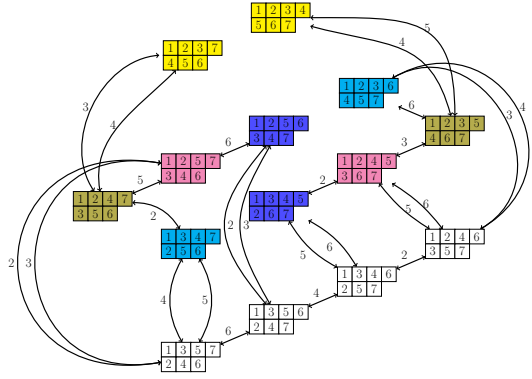

Franco Saliola suggested the dual equivalence graph for $\lambda$ is a subgraph of $Skeleton(\lambda)$. We conjecture further that

\begin{conj}
	The dual equivalence graph for $\lambda$ 
	is a subgraph of $Skeleton(\lambda)$, in the sense that, forgetting orientations and labels, if there are $r$ two sided arrows between two standard tableaux in the dual equivalence graph, there are also $r$ edges between the same standard tableaux in $Skeleton(\lambda)$.\\
	
	Moreover, if $Skeleton(\lambda)$ has $r>1$ edges between two standard tableaux, then there are $r$ edges between the same standard tableaux in the dual equivalence graph for $\lambda$.
	\label{conj:dualEquivGraph}
\end{conj}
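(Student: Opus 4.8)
The plan is to reduce Conjecture~\ref{conj:dualEquivGraph} to a purely local comparison of two explicit rules on the set $\SYT(\lambda)$: the rule governing when $Skeleton(\lambda)$ has an edge between two standard tableaux, which is already essentially in hand, and the elementary dual equivalence operators $d_i$ that generate Assaf's dual equivalence graph $\mathcal{G}_\lambda$ \cite{AssafDualEquivGraph}. Since the two graphs are labelled and oriented in incompatible ways (skeleton edges carry labels in $\{1,\ldots,n-1\}$ with $n=S$ and a rule depending on the whole host crystal, while dual equivalence edges carry labels in $\{2,\ldots,m-1\}$), the comparison must be carried out at the level of the underlying (multi)graph, which is exactly the content conjectured; so I would work with unlabelled, unoriented edges throughout, counting multiplicities as in the statement.

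First I would make the skeleton-edge rule completely explicit. The proof of Theorem~\ref{prop:SkeletonS} already performs the required case analysis: a (minimal) edge between $std(T_\alpha)$ and $std(T_\beta)$ occurs in $B(\lambda)_S$, hence in $Skeleton(\lambda)$, precisely when $\alpha$ and $\beta$ are related by one of the three surviving local recombinations of consecutive parts listed at the end of that proof, realised by a parsing-breaking crystal operator $f_i$ on some semistandard tableau of shape $\lambda$. Translating ``$f_i$ breaks the minimal parsing of $T_{\gamma^{(1)}}$'' back through standardization, each such edge corresponds to an honest local move on the positions of the entries around the $i$-th maximal horizontal band of $std(T_\alpha)$, and this move toggles membership of one or two consecutive elements in the descent set — which is exactly the shape of the two elementary dual RSK transformations on reading words displayed in Section~\ref{section:RSKPlacticCoplactic}. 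The goal of this step is to package the conclusion as: there is an edge in $Skeleton(\lambda)$ between $T$ and $T'$ if and only if $DesComp(T)$ and $DesComp(T')$ differ by one of finitely many local recombinations, each realisable on $\SYT(\lambda)$.

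Then I would identify these moves with the $d_i$-operators. The key observation is that $d_i$ is nontrivial on $T\in\SYT(\lambda)$ exactly when it changes $DesComp(T)$, and when it does, the change is a local recombination of the parts of $DesComp(T)$ straddling the positions indexed near $i-1,i$ — the same family of recombinations appearing in the skeleton analysis. Establishing (a) that every nontrivial $d_i$-move on $\SYT(\lambda)$ is realised by a parsing-breaking $f_j$ in some $B(\lambda)_n$, and hence yields a skeleton edge, and (b) that the three surviving skeleton configurations of the Theorem~\ref{prop:SkeletonS} proof produce nothing beyond the two elementary dual RSK transformations, gives the containment of the unlabelled dual equivalence graph in $Skeleton(\lambda)$. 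For the multiplicity statements, distinct edges of $Skeleton(\lambda)$ between a fixed pair carry distinct minimal labels $i$, each of which, through the correspondence, singles out a distinct $d_i$-move, so there are at least as many dual equivalence edges; combined with (a)–(b) and the fact that dual equivalence edges between a fixed pair also carry distinct labels, this forces equality of multiplicities whenever $Skeleton(\lambda)$ has $r>1$ edges, which is the converse half. A useful sanity check along the way is $\lambda=(4,3)$, where Figures~\ref{fig:graphTabStd} and~\ref{fig:dualGraphAssaf} must be matched edge for edge (seven extra single edges in the skeleton, identical multi-edges).

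The main obstacle — and the reason this is still a conjecture — is precisely the dictionary needed for (a) and (b): the skeleton is phrased in terms of crystal operators $f_j$ crossing between induced subgraphs $B(T_\alpha)$ of a semistandard crystal and their effect on the subset decomposition of a weight, whereas $d_i$ is phrased in terms of dual Knuth moves on reading words of standard tableaux. One must show that destandardizing a subcomponent in all parsing-preserving ways, and standardizing a crossing $f_j$-edge, intertwines exactly with the $d_i$-action: that no ``spurious'' skeleton edge arises outside the dual-equivalence family, and that every $d_i$-edge is realised. A concrete route to (a) is to exhibit, for each nontrivial $d_i$-move on $T\in\SYT(\lambda)$, an explicit semistandard tableau with enough repeated entries that the corresponding $f_j$ is defined and breaks the minimal parsing in the prescribed way; the genuinely delicate point is (b), ruling out that the surviving configurations encode moves not of dual RSK type, together with the bookkeeping behind the $r>1$ converse. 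I expect this last verification to be where essentially all the work lies; the computational evidence strongly suggests it goes through.
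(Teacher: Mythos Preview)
The statement is a \emph{conjecture} in the paper, and the paper does not prove it; the only evidence offered there is a computer verification for all $\lambda\vdash m$ with $m\leq 6$ and the worked example $\lambda=(4,3)$. So there is no ``paper's own proof'' to compare against.

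Your proposal is explicitly a strategy outline rather than a proof, and you say so yourself: the crucial steps (a) and (b) are left as expectations backed by computation. That is an honest assessment of the situation, and your general plan --- localising skeleton edges via the case analysis in the proof of the skeleton-stabilisation theorem, then matching against the elementary dual equivalence operators $d_i$ --- is a reasonable line of attack. However, there is an internal inconsistency you should resolve. You state (b) as ``the three surviving skeleton configurations \ldots\ produce nothing beyond the two elementary dual RSK transformations'', but you also correctly note, in your sanity check, that the $(4,3)$ skeleton has seven extra single edges \emph{not} present in the dual equivalence graph. These two claims are incompatible: those seven edges are skeleton edges not arising from any $d_i$, so (b) as you have phrased it is false. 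What the conjecture actually demands for the second half is much weaker: only that \emph{multi-edges} in $Skeleton(\lambda)$ (pairs $T,T'$ joined by $r>1$ skeleton edges) are matched by $r$ dual equivalence edges. Single skeleton edges are allowed to be ``spurious'' from the dual equivalence viewpoint, and they visibly are. Your argument for the $r>1$ converse --- distinct minimal labels on skeleton edges singling out distinct $d_i$'s --- therefore needs a mechanism that only kicks in when there is more than one edge, not a blanket claim that every skeleton edge is a $d_i$-edge. Until that distinction is made precise and the two directions (a) and (b') are actually carried out, this remains, as in the paper, a conjecture with a plausible strategy and supporting computation rather than a proof.
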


This conjecture has been verified for all partitions $\lambda\vdash m$ with $m\leq6$. We also note that most of the time the two graph structures are equal. 
The skeleton 
would then encode the dual RSK equivalences (among other relations) on standard tableaux.\\

Note that the reading word of all tableaux with same minimal parsing standardize to the same permutation, as noted in example~\ref{ex:StdTab}. All these tableaux correspond to the vertices of a subcomponent $B(T_\alpha)$, mapped onto $std(T_\alpha)$ in the skeleton.
We can then see the skeleton as encoding relations between permutations (reading words of the standard tableaux). However, the fact that these relations are generally dual RSK equivalences is surprising.

%

\section{Applications to plethysm}
\label{section:plethysm}

\subsection{Counting monomials in plethysms $s_\mu[s_\lambda]$}

As discussed in the introduction, plethysms of two Schur functions $s_\mu[s_\lambda]$ have a decomposition in the basis of fundamental quasisymmetric functions \cite{LoehrWarrington}: \[s_\mu[s_\lambda] = \sum_{A\in S_{a,b}(\mu,\lambda)} F_{Asc(A)}.\]
In this formula, $\mu\vdash a$, $\lambda\vdash b$, $S_{a,b}(\mu,\lambda)$ is a set of $a\times b$ "standard" matrices which depend on the shapes $\mu$ and $\lambda$, and $Asc(A)$ gives a composition giving the ascents of the word read off the matrix $A$ under a complex reading order.\\

The matrices are built from what Loehr and Warrington call tableaux of tableaux: entries of the tableau of shape $\mu$ are tableaux of shape $\lambda$. We then say they have \emph{shape $\lambda^\mu$}.
%
%
%
When a total order on tableaux is fixed, the tableaux of tableaux of shape $\lambda^\mu$ give the monomials of $s_\mu[s_\lambda]$ (see for example \cite{DeBoeckPagetWildon}, \cite{LoehrWarrington}, \cite{Stanley2}, etc.).\\ 

One definition of plethysm is in terms of the variable substitution of the $x_1,x_2,\ldots$ in $s_\mu$ by the monic monomials of $s_\lambda$ (monomials with coefficient $1$). 
Monomials with coefficients $c\in\N$ greater than $1$ are simply broken down into $c$ monic monomials. Since both functions in the plethysm are Schur functions (so symmetric), then the order of the monomials is not important, and the concept of tableaux of tableaux makes perfect sense.\\

By using the results above, we have that
\begin{cor}
	The number of monic monomials in a plethysm $s_\mu[s_\lambda(x_1,\ldots,x_n)]$ is equal to $|SSYT(\mu)_{|SSYT(\lambda)_n|}|$.
\end{cor}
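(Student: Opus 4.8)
The plan is to unwind the substitution definition of plethysm and then apply the monomial expansion of a Schur function. Set $N = |SSYT(\lambda)_n|$. I would first recall that $s_\lambda(x_1,\dots,x_n) = \sum_{T \in SSYT(\lambda)_n} x^T$, so that, listed with multiplicity, the monic monomials occurring in $s_\lambda(x_1,\dots,x_n)$ are exactly the $N$ monomials $m_T := x^T$ for $T \in SSYT(\lambda)_n$ — a monomial $x^\nu$ with coefficient $K^\lambda_\nu>1$ being counted $K^\lambda_\nu$ times, in accordance with the convention recalled before the statement. By the definition of plethysm as a substitution, $s_\mu[s_\lambda(x_1,\dots,x_n)] = s_\mu(m_{T_1},\dots,m_{T_N})$, where $T_1,\dots,T_N$ is any enumeration of $SSYT(\lambda)_n$; this is well defined precisely because $s_\mu$ is symmetric, so the outcome does not depend on the chosen enumeration (equivalently, on the fixed total order on tableaux of shape $\lambda$).

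Next I would expand the right-hand side via $s_\mu(y_1,\dots,y_N) = \sum_{U \in SSYT(\mu)_N} y^U$, where $y^U = \prod_{c} y_{U(c)}$ with the product over the cells $c$ of $\mu$. Specializing $y_j = m_{T_j}$, each term $\prod_{c} m_{T_{U(c)}}$ is a product of monic monomials in $x_1,\dots,x_n$, hence itself a monic monomial, so no further "breaking down" is needed. The number of terms in this sum, counted with multiplicity (one term per $U \in SSYT(\mu)_N$), is therefore exactly $|SSYT(\mu)_N|$. Consequently the number of monic monomials of $s_\mu[s_\lambda(x_1,\dots,x_n)]$ — equivalently, the count obtained after expanding each monomial with integer coefficient $c$ into $c$ monic monomials, equivalently the number of tableaux of tableaux of shape $\lambda^\mu$ once a total order on $SSYT(\lambda)_n$ is fixed — equals $|SSYT(\mu)_N| = |SSYT(\mu)_{|SSYT(\lambda)_n|}|$, as claimed. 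One may further note that $N = |SSYT(\lambda)_n|$ is itself computed by Theorem~\ref{theo:nbrTab}, so the count is fully explicit.

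There is no substantial obstacle here beyond bookkeeping; the two points to be careful about are, first, the meaning of "number of monic monomials'', namely that it is counted with multiplicity (coefficients summed), so that collisions among the substituted monomials in the $x_i$ are irrelevant to the count — if one counted distinct monomials instead, the statement would fail in general, but that is not what is asserted; and second, the legitimacy of the substitution definition of plethysm, which rests exactly on the symmetry of $s_\mu$ and makes the labelling of the $N$ monic monomials of $s_\lambda$ immaterial.
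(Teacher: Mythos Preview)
Your argument is correct and is precisely the approach the paper takes: the paper's one-line proof (``consider the plethystic substitution of the monic monomials of $s_\lambda(x_1,\ldots,x_n)$ into $s_\mu$'') is exactly what you have unpacked, and your care about counting with multiplicity and the role of symmetry of $s_\mu$ makes the bookkeeping explicit where the paper leaves it implicit.
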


\begin{proof}
	One need only consider the plethystic substitution of the monic monomials of the Schur function $s_\lambda(x_1,x_2,\ldots,x_n)$ into $s_\mu$.
\end{proof}

\subsection{Decomposing a symmetric sum of quasisymmetric functions into the basis of Schur functions}

Since plethysms are symmetric, and the plethysm of two Schur functions can be expressed as a symmetric sum of fundamental quasisymmetric functions, giving a combinatorial description of the passage from this expression to one in the Schur basis might help make progress on plethysm problems.\\

If $f$ is a symmetric function with decomposition into the basis of quasisymmetric functions $f = \sum d_\alpha F_\alpha$, we can replace the $F_\alpha$ by generalized Schur functions $s_\alpha$, defined using the Jacobi-Trudi definition on determinants, with the same coefficients \cite{Garsia2018}. A generalized symmetric function $s_\alpha$ is equal to $\pm s_\lambda$, for some partition $\lambda$. Then a lot of generalized Schur functions cancel out, so this is far from efficient.\\

For a symmetric $f = \sum d_\alpha F_\alpha$, another way to express it in the Schur basis is through multiple changes of basis: fundamental quasisymmetric functions to monomial quasisymmetric functions to monomial symmetric functions to Schur functions.  This is computationally faster, so this is the algorithm implemented in SageMath. However, this doesn't give much insight into the relationship between the two basis which truely interests us.\\ 

We can then use the results above to give another way of expressing a symmetric function (given in terms of fundamental quasisymmetric functions) into the basis of Schur functions. 

\begin{prop}
	Let $f$ be a symmetric function which admits a decomposition $f=\sum_{\beta} c_\beta F_\beta$ into the basis of fundamental quasisymmetric functions. Let $\alpha$ be the maximal descent composition appearing in the decomposition of $f$ for the lexicographical order.\\ 
	Then $\alpha$ is a partition, $f-c_\alpha s_\alpha$ is Schur-positive and $F$-positive.
\end{prop}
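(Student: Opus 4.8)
The plan is to reduce everything to the Schur expansion of $f$ together with Gessel's formula~(\ref{eq:Gessel}). Throughout I take $f$ to be Schur-positive — write $f=\sum_{\lambda}a_\lambda s_\lambda$ with all $a_\lambda\geq 0$ — which is the relevant case (a plethysm of Schur functions is Schur-positive). Some positivity hypothesis on $f$ is genuinely needed here: for a merely $F$-positive symmetric function the statement already fails, as shown by
\[
f=s_{(3,1)}-s_{(2,2)}+s_{(2,1,1)}=F_{(3,1)}+F_{(1,3)}+F_{(2,1,1)}+F_{(1,1,2)},
\]
whose lex-maximal composition is the partition $(3,1)$ with coefficient $1$, but $f-s_{(3,1)}=-s_{(2,2)}+s_{(2,1,1)}$ is neither Schur- nor $F$-positive.

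The first, and main, step is to pin down the lex-leading term of each $s_\lambda$ in the fundamental basis. By Proposition~\ref{prop:conditionsOfDescentCompositionsAndPartitions}, the descent composition $\alpha$ of any $T\in\mathrm{SYT}(\lambda)$ satisfies $\alpha_1+\dots+\alpha_j\leq\lambda_1+\dots+\lambda_j$ for all $j$; reading this off termwise (if $\alpha_1=\lambda_1$ then $\alpha_2\leq\lambda_2$, and so on) shows $\alpha\leq_{\mathrm{lex}}\lambda$, with equality only if $\alpha=\lambda$. By Proposition~\ref{prop:bijTabStdTabBandesMax}, the number of $T\in\mathrm{SYT}(\lambda)$ with $\mathrm{DesComp}(T)=\lambda$ equals the number of semistandard tableaux of shape \textit{and} weight $\lambda$, which is $1$ (namely $1_\lambda$, whose minimal parsing is readily checked to be of type $\lambda$). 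Hence~(\ref{eq:Gessel}) gives
\[
s_\lambda = F_\lambda + \sum_{\gamma<_{\mathrm{lex}}\lambda}\#\{T\in\mathrm{SYT}(\lambda):\mathrm{DesComp}(T)=\gamma\}\cdot F_\gamma,
\]
with all coefficients nonnegative. Since the $F_\beta$ form a basis and $f=\sum_\lambda a_\lambda s_\lambda$ with $a_\lambda\geq 0$, there is no cancellation when passing to $f=\sum_\beta c_\beta F_\beta$: $c_\beta>0$ precisely when some $\lambda$ with $a_\lambda>0$ carries a tableau $T\in\mathrm{SYT}(\lambda)$ with $\mathrm{DesComp}(T)=\beta$.

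Now I would identify $\alpha$ and $c_\alpha$. By the previous sentence, $\{\beta:c_\beta\neq 0\}=\bigcup_{\lambda:\,a_\lambda>0}\{\mathrm{DesComp}(T):T\in\mathrm{SYT}(\lambda)\}$, and for each such $\lambda$ the lex-largest member of its set is $\lambda$ itself while every member is $\leq_{\mathrm{lex}}\lambda$; hence the lex-maximal composition occurring in $f$ is $\alpha=\max_{\mathrm{lex}}\{\lambda:a_\lambda>0\}$, which is by construction a partition. (Alternatively, $\alpha$ is a partition for the softer reason that $x_1^{\alpha_1}x_2^{\alpha_2}\cdots$ is the lex-leading monomial of $F_\alpha$, hence of $f$ — distinct compositions have distinct leading monomials and no higher one survives — and the leading monomial of a symmetric function has weakly decreasing exponents; this form of the argument needs only that $f$ is symmetric.) To compute $c_\alpha$: a partition $\mu$ with $a_\mu>0$ contributes to $c_\alpha$ only if some $T\in\mathrm{SYT}(\mu)$ has $\mathrm{DesComp}(T)=\alpha$, which by the inequality above forces $\alpha\leq_{\mathrm{lex}}\mu$; combined with $\mu\leq_{\mathrm{lex}}\alpha$ this gives $\mu=\alpha$, and the contribution is $a_\alpha\cdot 1$. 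So $c_\alpha=a_\alpha$ and
\[
f-c_\alpha s_\alpha=\sum_{\lambda\neq\alpha}a_\lambda s_\lambda,
\]
a nonnegative combination of Schur functions, hence Schur-positive, and $F$-positive since each $s_\lambda$ is $F$-positive by~(\ref{eq:Gessel}).

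The delicate part is the determination of the leading $F$-term of $s_\lambda$ above (equivalently: that $\lambda$ is the lex-maximal descent composition of shape $\lambda$ and occurs with multiplicity one), since this is exactly what lets the leading Schur term of $f$ be read off directly from its leading $F$-term; everything afterwards is bookkeeping with nonnegative coefficients. The only other thing to be careful about is the standing hypothesis: the argument uses Schur-positivity of $f$, and this should be made explicit, as the conclusion is false for general symmetric $F$-positive functions.
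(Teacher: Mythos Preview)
Your argument follows the paper's proof closely: both use Proposition~\ref{prop:conditionsOfDescentCompositionsAndPartitions} to get $\alpha\leq_{\mathrm{lex}}\lambda$ for any descent composition $\alpha$ of $\lambda$, observe that $\lambda$ itself occurs (via the superstandard tableau, equivalently $1_\lambda$), and conclude that the lex-leading $F$-term of $f$ is indexed by the lex-leading partition in the Schur support with matching coefficient. Your write-up is more careful on one point the paper glosses over: you explicitly argue $c_\alpha=a_\alpha$ by ruling out contributions from other $\mu$, whereas the paper asserts this in one line.

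More importantly, you correctly identify a missing hypothesis. The paper's statement assumes only that $f$ is symmetric (with an $F$-expansion), but its proof tacitly uses Schur-positivity to forbid cancellation; your counterexample $f=s_{(3,1)}-s_{(2,2)}+s_{(2,1,1)}$ is valid and shows the conclusion fails for merely $F$-positive symmetric functions. In the paper's intended application (plethysm of Schur functions) the hypothesis is satisfied, but it should be stated.
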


\begin{proof}
	Since $f$ is symmetric, it must admit a (unique) decomposition into the basis of Schur function.
%
	%
	We have seen in proposition~\ref{prop:conditionsOfDescentCompositionsAndPartitions} that the partitions $\lambda$ such that $F_\alpha$ can appear in $s_\lambda$ must have $\lambda_1\geq \alpha_i$, so in particular $\lambda_1\geq \alpha_1$. We must also have that $\lambda_1+\lambda_2+\ldots+\lambda_j\geq \alpha_1+\alpha_2+\ldots+\alpha_j$ for all $j$, so $\alpha\leq_{lex} \lambda$, with $\leq_{lex}$ the lexicographical order.\\
	
	If $s_\lambda$ appears in $f$, then $F_\lambda$ must appear in the decomposition of $f$ in the basis of quasisymmetric functions. This is because $s_\lambda = \sum_{T\in SYT(\lambda)} F_{compDes(T)}$, and $\lambda$ is the descent composition of the standard tableau often refered as the \textit{superstandard tableau}, which has destandardization (according to its minimal parsing) $1_\lambda$.\\

	Since we have picked $\alpha$ maximal for the lexicographical order in all descent compositions appearing in the decompositions of $f$, then $c_\gamma = 0$ for all $\gamma>_{lex}\alpha$. We must then have that $\alpha=\lambda$ is a partition, $s_\alpha$ appears $c_\alpha$ times in the decomposition of $f$ in the basis of Schur functions, and so $f-c_\alpha s_\alpha$ is symmetric and has a decomposition in the basis of fundamental quasisymmetric functions with only positive coefficients.
\end{proof}

\begin{cor}
	The following algorithm gives the decomposition of a 
	symmetric function $f$, expressed in the basis of fundamental quasisymmetric functions, into the Schur basis.
	
	\begin{algo}
		Let $f$ be a 
		symmetric function with an expression in the basis of fundamental quasisymmetric functions $f=\sum_\beta c_\beta F_\beta$. 
		Let $S=f$, and reset $f=0$. 
		\begin{enumerate}[itemsep=0ex]
			\item Let $\alpha$ be the leading support, ie the largest descent composition appearing in $S$ for the lexicographical order. It must be a partition.
			\item Let $\displaystyle S=S-c_\alpha \left( \sum_{T\in SYT(\alpha)} F_{compDes(T)} \right)$ and $f=f+c_\alpha s_\alpha$.
			\item Repeat until $S=0$.
		\end{enumerate}
		Then $f$ is expressed in the basis of Schur functions.
	\end{algo}  
\end{cor}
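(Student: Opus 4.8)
The plan is to verify the algorithm by a short induction built on the previous proposition and on Gessel's identity~(\ref{eq:Gessel}). First I would fix the loop invariant: at the start of each iteration $S$ is a symmetric function, $f$ is a linear combination of Schur functions, and $S+f$ equals the original input. The first two properties are preserved by step~2 because $s_\alpha$ is symmetric, and the invariant $S+f=\text{const}$ is preserved because Gessel's identity~(\ref{eq:Gessel}) is precisely the statement that the quantity subtracted from $S$ in step~2 is $c_\alpha s_\alpha$. The substantive facts I would import from the previous proposition are that, when $S\neq 0$, its lex-largest descent composition $\alpha$ is a partition (so step~1 is legitimate) and that $c_\alpha$, the coefficient of $F_\alpha$ in $S$, is also the coefficient of $s_\alpha$ in the Schur expansion of $S$.

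Next I would establish the strict decrease of the leading support, which drives termination. The key sublemma is the lexicographic unitriangularity of the Schur-to-fundamental transition on partition indices: for a partition $\alpha$, in $s_\alpha=\sum_{T\in SYT(\alpha)}F_{DesComp(T)}$ the term $F_\alpha$ appears with coefficient exactly $1$, and $DesComp(T)<_{\mathrm{lex}}\alpha$ for every other $T$. The inequality $DesComp(T)\leq_{\mathrm{lex}}\alpha$ is immediate from Proposition~\ref{prop:conditionsOfDescentCompositionsAndPartitions}(2): the partial sums of $DesComp(T)$ are dominated by those of $\alpha$ while the total sums agree, so the first coordinate at which the two compositions differ is one at which $DesComp(T)$ is strictly smaller. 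For the coefficient, I would argue that the superstandard tableau is the unique $T\in SYT(\alpha)$ with $DesComp(T)=\alpha$: cell $(1,1)$ holds $1$; a non-descent at $i$ forces $i+1$ weakly above the row of $i$ and a descent forces it strictly below, and since each row of $\alpha$ is filled completely before the next is used, an induction on $i$ recovers the row-by-row filling. Granting this, subtracting $c_\alpha s_\alpha$ from $S$ kills the $F_\alpha$-coefficient and introduces only $F_\gamma$ with $\gamma<_{\mathrm{lex}}\alpha$, so the new $S$ has nothing at or above $\alpha$ in lex order; its leading support is therefore strictly smaller, or $S=0$.

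Finally I would close the loop: $f$ has finite support (it is homogeneous of a fixed degree $m$ in the intended plethysm application), so every $S$ produced is supported on compositions of $m$, a finite set totally ordered by $<_{\mathrm{lex}}$; a strictly decreasing sequence there is finite, so the process halts with $S=0$ after finitely many iterations. By the invariant, the original $f$ then equals the accumulated sum $\sum_\alpha c_\alpha s_\alpha$ over the partition leading supports encountered, which, since the $s_\lambda$ form a basis, is its Schur expansion.

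The only step I expect to require real care is the unitriangularity sublemma — the uniqueness of the superstandard tableau with descent composition $\alpha$, and the strict lexicographic descent of all the other descent compositions of shape $\alpha$; everything else is bookkeeping on top of the previous proposition. I would also note that the $F$- and Schur-positivity asserted by that proposition are really used only when $f$ is itself Schur-positive (the plethysm case): correctness and termination of the algorithm as written need only the partition property of the leading support and the strict lexicographic decrease.
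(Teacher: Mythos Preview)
Your argument is correct and follows the same route as the paper: the corollary is stated without a separate proof, being an immediate induction on the preceding proposition, whose proof already invokes the superstandard tableau to identify the lex-leading $F$-term of $s_\lambda$. You make explicit two points the paper leaves to the reader---the uniqueness of the superstandard tableau among $T\in\SYT(\alpha)$ with $DesComp(T)=\alpha$, and the termination via strict lex decrease on a finite set of compositions---and your closing remark that Schur- and $F$-positivity are not needed for correctness (only for the positivity clause of the proposition) is a valid sharpening.
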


\begin{rem}
	This algorithm is rather simple and straightforward from the definitions, so others may have used it before. However, it seems to be absent from the literature. Its underlying construction is its most important interest, as it is generally not more efficient than the algorithm implemented in SageMath (when computing the decomposition of a random symmetric sum of quasisymmetric functions of degree $n$ into the Schur basis). 
	Maybe plethysm is even more closely related to fundamental quasisymmetric functions than we thought.
\end{rem}

\begin{rem}
	This algorithm may explain the result of De Boeck, Paget and Wildon, stating that a Schur function $s_\nu$ occurs in a plethysm $s_\mu[s_\lambda]$ with multiplicity given exactly by the number of \textit{maximal plethystic tableaux of weight $\nu$} if and only if $\nu$ is maximal for the lexicographical order for $\mu,\lambda$ \cite{DeBoeckPagetWildon}. Maximal here indicates that no entry $c$ of a tableau-entry can be changed for an entry $c-1$ without breaking the condition of the larger tableau being semistandard.
\end{rem}

\section*{Conclusion}

We now have a better understanding of the decomposition of Schur functions into quasisymmetric functions, and of the relationships between them. 
We may now use this to find a more direct expression of the plethysm of two Schur functions into the basis of Schur functions.

\section*{Aknowledgement} I thank Franco Salioa for his support throughout this project. I also thank those who contributed to SageMath, which helped test examples and generate figures, and those behind OEIS, which helped formulate proposition~\ref{prop:NumberOfTableauxInBm}. FMG received funding from NSERC. 

\newpage
\appendix
\section{Annex: Proofs for $\operatorname{EVAC}$}
\label{annex:proofsEVAC}

We give here a proof that the evacuation map $\operatorname{EVAC}$ defined in section~\ref{section:dualposition} is an anti-automorphism of crystals. We also give a proof that it inverses descent compositions. Berenstein and Zelevinsky proved the former in \cite{BerensteinZelevinsky}, but their proof uses a lot of tools of crystal theory and representation theory. We believe our proof to be of interest, since it can be more accessible, and also uses an anti-automorphism of crystals on words $Rot$ studied already in the litterature: 
for $w=w_1w_2\ldots w_k\in [n]^{\otimes k}$, \[Rot(w)= \text{compl}(w_k\ldots w_2 w_1) = \bar{w_k}\ldots \bar{w_2} \bar{w_1},\] where $\bar{\ell}=compl(\ell)=n-\ell+1$.\\

The operator $Rot$ was studied by Poirier and Reutenauer in \cite{PoirierReutenauer}. They showed that
	$Rot(w) = w_0 w w_0$, where $w_0 = k (k-1) \ldots 3 2 1$ is the longest permutation of the symmetric group $\S_k$, for $k$ the length of $w$. 
%
	$w w_0$ is the mirror image $w$, 
	and 
	$w_0 w$ changes letters $i$ of $\overset{\leftarrow}{w}$ into $k-i+1$. If $w$ is not a permutation, we can standardize $w$ according to its weight $\beta$ from left to right, and afterwards de-standardize it according to $\overset{\leftarrow}{\beta}$, from right to left. We then get the same effect on $w$ as $Rot$.\\

We then have that

\begin{prop}
	$Rot$ is an anti-automorphism of crystals of words which inverses descent compositions.
\end{prop}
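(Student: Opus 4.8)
The statement is that $Rot$ is an anti-automorphism of crystals of words that reverses descent compositions. The plan is to break this into two independent tasks: (1) showing $Rot$ is an anti-automorphism of crystals, and (2) showing $Rot$ reverses descent compositions. For (1), I would invoke the Poirier--Reutenauer identity $Rot(w) = w_0 w w_0$, which is already recalled in the excerpt. The key observation is that each of the two operations composing $Rot$ has a known effect on crystal structure: the mirror-image operation $w \mapsto \overset{\leftarrow}{w}$ (right-multiplication by $w_0$) together with the complementation $\ell \mapsto n-\ell+1$ is exactly the Lusztig--Kashiwara involution on words, and this is classically a crystal anti-automorphism — meaning that if $f_i(w) = w'$ then $e_{n-i}(Rot(w)) = Rot(w')$. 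I would give a direct combinatorial verification using the parenthesis rule: complementation swaps the roles of the letters $i$ and $i+1$ into $n-i$ and $n-i+1$, and also swaps the parentheses "(" and ")"; mirror-reversing the word then swaps "leftmost uncoupled (" with "rightmost uncoupled )". Tracking these two swaps carefully shows that the action of $f_i$ on $w$ corresponds precisely to the action of $e_{n-i}$ on $Rot(w)$, and that $\varphi_i, \varepsilon_i$ are exchanged appropriately; this is exactly the definition of a crystal anti-automorphism.

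For task (2), I would use the characterization of the descent composition of $w$ as the sequence of lengths of the maximal weakly increasing factors of $w$ (equivalently, the descent set consists of the positions $i$ with $w_i > w_{i+1}$). Reversing the word, $w \mapsto \overset{\leftarrow}{w}$, turns position-$i$ descents into position-$(k-i)$ \emph{ascents}, i.e.\ it swaps the notions of descent and strict ascent. Complementation $\ell \mapsto n - \ell + 1$ is strictly order-reversing on letters, hence turns each strict ascent into a strict descent and vice versa, and leaves equalities as equalities (it is a bijection on $[n]$). Composing the two: the mirror reverses positions and swaps ascent/descent, the complement swaps ascent/descent back but now with reversed letter order; net effect is that the descent set $\{i_1 < \dots < i_{s-1}\}$ of $w$ becomes $\{k - i_{s-1} < \dots < k - i_1\}$, which translates (under the descent-set $\leftrightarrow$ descent-composition bijection recalled before Proposition~\ref{prop:bijTabStdTabBandesMax}) precisely to the reversed composition $\overset{\leftarrow}{\alpha} = (\alpha_s, \dots, \alpha_1)$. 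The non-permutation case is handled exactly as described in the excerpt: standardize $w$ by its weight $\beta$ left-to-right, apply the permutation argument, then de-standardize by $\overset{\leftarrow}{\beta}$ right-to-left, noting that standardization and this prescribed de-standardization both preserve descents.

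The main obstacle I anticipate is task (1): carefully verifying, at the level of the parenthesis rule, that the composition of mirror-reversal and complementation exchanges $e_i \leftrightarrow f_{n-i}$ rather than producing some subtler relabeling. One has to check not just that "an $i$ becomes an $i+1$" behavior is mirrored correctly, but that the \emph{specific} letter acted upon (leftmost uncoupled "(" for $e_i$, rightmost uncoupled ")" for $f_i$) matches under the double swap — the reversal of the word flips "leftmost" to "rightmost", and the swap of parenthesis types flips "(" to ")", so the two flips must be checked to compose to the identity on "which physical letter is selected". Once this bookkeeping is done correctly the rest is routine; alternatively, if the identity $Rot(w) = w_0 w w_0$ and the fact that this is the standard crystal involution are quoted from Poirier--Reutenauer or a standard reference such as \cite{BumpSchilling}, task (1) becomes immediate and only task (2) requires the short argument above.
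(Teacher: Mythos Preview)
Your proposal is correct and takes essentially the same approach as the paper: both parts are argued exactly as the paper does, namely a direct verification via the parenthesis rule that $Rot(f_i(w)) = e_{n-i}(Rot(w))$ (tracking how complementation swaps the parenthesis types and reversal swaps leftmost/rightmost), and then a check that reversal followed by complementation sends maximal weakly increasing factors of lengths $\alpha$ to maximal weakly increasing factors of lengths $\overset{\leftarrow}{\alpha}$. Your write-up is more careful about the bookkeeping in task~(1), but the underlying argument is identical.
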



\begin{proof}
	We have that $Rot$ is an involution, so we have an automorphism.
	We need to show that $Rot (f_i(w)) = e_{n-i}(Rot(w))$ for any word $w$. \\
	
	By definition, $Rot(w)=\bar{w_k}\bar{w_{k-1}}\ldots \bar{w_2}\bar{w_1}$ where $\bar{w_i}=n-w_i+1$.
	Suppose entries $i$ and $i+1$ give a certain sequence of unpaired parenthesis $)^{\phi_i}  (^{\epsilon_i}$. Then $Rot(w)$ has the sequence $)^{\epsilon_i}  (^{\psi_i}$ for entries $n-i$ and $n-i+1$, where entries $n-i+1$ are obtained from letters $i$ in $w$, and letters $n-i$, from letters $i+1$ in $w$.
	The letter $i$ affected by $f_i$ in $w$ corresponds then to the letter $n-i+1$ affected by $e_{n-i}$ in $Rot(w)$. Then $Rot(f_i(w))=e_{n-i}(Rot(w))$.\\
	
	Now descent compositions have been described as the lengths of the minimal parsing of words $w$ into weakly increasing factors. If a word $w$ has minimal parsing of lengths $\alpha$, then reversing the order of the letters gives weakly increasing sequences of lengths $\overset{\leftarrow}{\alpha}$, 
	and complementing letters gives back weakly increasing sequences of lengths $\overset{\leftarrow}{\alpha}$. Then $Rot$ inverses descent compositions.
\end{proof}

Let's denote by $B_\lambda(w)$ the crystal (on words) which has source $w$, of weight $\lambda$. Then $B_\lambda(w)\isom B(\lambda)_n$ by proposition~\ref{prop:IsomCristalTab}.

\begin{rem}
	It is possible to show that crystal operators preserve descent compositions on words \cite{ApplebyWhitehead}, so the fact that $Rot$ inverses descent compositions of words implies that the image of a connected component crystal of words $B_\lambda(w)$, under the $Rot$ map, is the (dual) isomophic connected component of crystal of words $B_\lambda(w)^\#$, obtained by reversing arrows directions, re-labelling arrows $i$ by $n-i+1$, and vertices $v$ by $Rot(v)$.\\
	
	In particular, if $w$ is a Yamanouchi word (the source of a connectd component $B_\lambda(w)$), then 
	$Rot(w)$ is an anti-Yamanouchi word, 
	with $f_i(Rot(w)) = NULL \ \forall i$, and 
	the sink of $B_\lambda(w)^{\#}$. 
	
	These two components are isomorphic to the same $B(\lambda)$, so the sources and sinks of both will map respectively on $1_\lambda$ and the sink of $B(\lambda)$, which will be $\operatorname{EVAC}(1_\lambda)$.
\end{rem}



We can now use the correspondence given by Poirier and Reutenauer to see how this $Rot$ map affects the $RSK$ insertion. Since $Rot(w) = w_0 w w_0$, then $RSK(Rot(w))=RSK(w_0 w w_0)$.

\begin{prop}[\cite{ChmutovFriedenKimLewisYudovine}]%
	$RSK(w_0 w w_0) = (\operatorname{EVAC}(P(w)), \operatorname{EVAC}(Q(w)).$
\end{prop}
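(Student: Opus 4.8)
The plan is to decompose the action of $w_0 \cdot w \cdot w_0$ into its two constituent operations — left-multiplication by $w_0$ and right-multiplication by $w_0$ — and track what each does to the pair $(P(w), Q(w))$ separately. First I would recall the two classical "symmetry" properties of $RSK$: (1) reversing a word, i.e. passing from $w$ to $w w_0$ (the mirror word $\overset{\leftarrow}{w}$), has the effect $RSK(\overset{\leftarrow}{w}) = (P(w)^t, (\text{something})^t)$ on the insertion tableau, where ${}^t$ denotes transpose — but more useful here is the refined statement that reversal and complementation *together* give evacuation; and (2) the well-known fact (stated earlier in the excerpt via the Poirier–Reutenauer correspondence) that $Rot(w) = w_0 w w_0$ equals the mirror-and-complement of $w$. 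So the cleanest route is to invoke the known theorem of Schützenberger that for a standard word (permutation) $\sigma$, $RSK(w_0 \sigma w_0) = (\operatorname{EVAC}(P(\sigma)), \operatorname{EVAC}(Q(\sigma)))$, and then bootstrap from permutations to arbitrary words.

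The key steps, in order, are: (i) Reduce to the permutation case by standardization. Given an arbitrary word $w$ of weight $\beta$, let $\sigma = std(w)$; then $P(\sigma) = std(P(w))$ and $Q(\sigma) = Q(w)$ (since $Q$ is already standard and standardization of $w$ preserves it). (ii) Observe that $Rot(w) = w_0 w w_0$ is, per Poirier–Reutenauer, obtained by standardizing $w$ according to $\beta$, applying $w_0(\cdot)w_0$ to the permutation, then destandardizing according to $\overset{\leftarrow}{\beta}$; so $std(Rot(w)) = w_0 \sigma w_0$, and $Rot(w)$ has weight $\overset{\leftarrow}{\beta}$. (iii) Apply Schützenberger's permutation-level identity to get $RSK(w_0\sigma w_0) = (\operatorname{EVAC}(std(P(w))), \operatorname{EVAC}(Q(w)))$. (iv) Destandardize: the insertion tableau of $Rot(w)$ is the destandardization of $\operatorname{EVAC}(std(P(w)))$ with respect to weight $\overset{\leftarrow}{\beta}$; I then need to check this equals $\operatorname{EVAC}(P(w))$, using that $\operatorname{EVAC}$ as defined via $jdt\circ\text{compl}\circ Rot_{180^\circ}$ commutes appropriately with standardization — i.e. $std(\operatorname{EVAC}(T)) = \operatorname{EVAC}(std(T))$ and $\operatorname{EVAC}$ sends a tableau of weight $\beta$ to one of weight $\overset{\leftarrow}{\beta}$. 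The recording tableau $Q$ is already standard, so the $\operatorname{EVAC}(Q(w))$ factor needs no destandardization.

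The main obstacle is step (iv): verifying compatibility of $\operatorname{EVAC}$ with standardization/destandardization for the (possibly non-standard) insertion tableau, and pinning down that the weight really does reverse. This is where the definition $\operatorname{EVAC}=jdt\circ\text{compl}\circ Rot_{180^\circ}$ earns its keep — rotation by $180^\circ$ composed with complementation is manifestly compatible with standardizing then destandardizing in reverse order (it swaps the roles of "smallest" and "largest," exactly as $\overset{\leftarrow}{\beta}$ demands), and $jdt$ commutes with standardization because jeu de taquin slides are defined by the same comparisons before and after relabelling. Assembling these observations carefully, together with the already-granted Poirier–Reutenauer identity $Rot(w) = w_0 w w_0$ and the classical Schützenberger symmetry on permutations, yields the claim. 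Alternatively, one could avoid standardization entirely by working directly with the biword $RSK$ on $\begin{psmallmatrix} \cdot \\ w \end{psmallmatrix}$ and using the biword-reversal symmetry, but the reduction-to-permutations approach is the more transparent one to write down.
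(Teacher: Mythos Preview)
The paper does not prove this proposition at all: it is stated with a citation to \cite{ChmutovFriedenKimLewisYudovine} and used as a black box in the subsequent arguments of the appendix. There is therefore no ``paper's own proof'' to compare against.

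That said, your proof strategy is sound and is essentially the standard way one would establish this result. The reduction to permutations via standardization is the right move, and you have correctly identified the one genuinely nontrivial step: verifying that $\operatorname{EVAC}$ commutes with standardization in the sense that $std(\operatorname{EVAC}(T)) = \operatorname{EVAC}(std(T))$, together with the weight-reversal $\beta \mapsto \overset{\leftarrow}{\beta}$. Your justification of this via the factorization $\operatorname{EVAC} = jdt \circ \text{compl} \circ Rot_{180^\circ}$ is the natural one, since each factor is individually compatible with standardization (rotation and complementation manifestly so, and jeu de taquin because slide moves depend only on the relative order of entries). The Schützenberger permutation-level identity you invoke in step (iii) is itself classical. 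One small point to be careful about: in the paper's setup $w$ need not be a permutation, so $w_0 w w_0$ is interpreted via the standardize--apply--destandardize procedure described just before the proposition; your step (ii) handles this correctly, but it is worth making explicit that $Q(Rot(w)) = Q(std(Rot(w)))$ since $Q$ is always standard, so the $Q$-side really does require no destandardization.
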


We can finally can show that $\operatorname{EVAC}$ is an anti-automorphism of crystals of tableaux, by using only the results above.
\begin{prop}
	$\operatorname{EVAC}$ is an anti-automorphism of crystals of tableaux, when considering vertices as pairs of tableaux $(P,Q)$ with $Q$ standard obtained through $RSK$.
\end{prop}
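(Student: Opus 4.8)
The plan is to assemble the statement from the three ingredients that have just been established, using the RSK correspondence as a dictionary between crystals of words and crystals of tableaux. First I would recall the precise setup: a vertex of the crystal of tableaux is a pair $(P,Q)$ with $P$ semistandard of shape $\lambda$ and $Q$ standard of shape $\lambda$, obtained as $RSK(w)$ for some word $w$; crystal operators act only on $P$ (equivalently, $f_i(P(w),Q(w)) = (P(f_i(w)), Q(w))$, since $Q$ is a coplactic invariant by Proposition~\ref{prop:crystalOfTableaux}). The map $\operatorname{EVAC}$ on such a pair is defined to act as $\operatorname{EVAC}$ on each coordinate, i.e. $(P,Q)\mapsto(\operatorname{EVAC}(P),\operatorname{EVAC}(Q))$, and by the Berenstein--Zelevinsky description this agrees with $jdt\circ\text{compl}\circ Rot_{180\degres}$ applied to the shape-$\lambda$ tableau $P$ (and likewise for $Q$). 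The key identity to exploit is the one just quoted: $RSK(w_0 w w_0) = (\operatorname{EVAC}(P(w)),\operatorname{EVAC}(Q(w)))$, together with $Rot(w)=w_0 w w_0$ from Poirier--Reutenauer.

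Next I would carry out the core computation. Take a word $w$ with $RSK(w) = (P,Q)$, and apply a crystal operator $f_i$. On the tableau side this sends $(P,Q)$ to $(P(f_i(w)), Q)$. Now compute $\operatorname{EVAC}$ of the image via the word model: $\operatorname{EVAC}(f_i(P),\operatorname{anything}) = RSK(Rot(f_i(w))) = RSK(e_{n-i}(Rot(w)))$ by the proposition that $Rot$ is an anti-automorphism of word crystals, i.e. $Rot\circ f_i = e_{n-i}\circ Rot$. On the other hand, $\operatorname{EVAC}(P,Q) = RSK(Rot(w))$, and applying $e_{n-i}$ to this pair of tableaux gives $RSK(e_{n-i}(Rot(w)))$ again, using $P(e_j(v)) = e_j(P(v))$. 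Matching these two expressions yields
\[
\operatorname{EVAC}(f_i(P,Q)) = e_{n-i}\bigl(\operatorname{EVAC}(P,Q)\bigr),
\]
which is exactly the anti-automorphism condition. Symmetrically $\operatorname{EVAC}(e_i(P,Q)) = f_{n-i}(\operatorname{EVAC}(P,Q))$, and $\operatorname{EVAC}$ is an involution (already noted, from Berenstein--Zelevinsky), hence a bijection; together with weight-reversal $wt(\operatorname{EVAC}(T)) = \overset{\leftarrow}{wt(T)}$ (which follows since $\text{compl}$ reverses weights and $jdt$, $Rot_{180\degres}$ preserve them) this gives that $\operatorname{EVAC}$ is a genuine crystal anti-automorphism of $B(\lambda)_n$.

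One bookkeeping point deserves care and I would address it explicitly: the word representative is not unique, so I must check the argument does not depend on the choice of $w$ with $RSK(w)=(P,Q)$. This is fine because $P(f_i(w))$ depends only on $P(w)$ (Proposition~\ref{prop:crystalOfTableaux}(1)) and because $Rot$ descends to a well-defined operation on plactic and coplactic classes — indeed $RSK(Rot(w))$ is determined by $(P(w),Q(w))$ via the Chmutov--Frieden--Kim--Lewis--Yudovine identity, which is stated coordinatewise. I would also note that the $Q$-coordinate simply rides along: $\operatorname{EVAC}$ acts on it by $\operatorname{EVAC}$ as well, consistently on both sides of every identity, since $Q(e_j(v)) = Q(v)$. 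The main obstacle, such as it is, is purely organizational: keeping straight the four arenas (words vs. tableaux, $e$ vs. $f$, the $P$- vs. $Q$-coordinate, and the index shift $i\leftrightarrow n-i$) and making sure the conjugation $Rot(w)=w_0ww_0$ is applied to words of the correct length so that $w_0$ is the right longest element; there is no deep difficulty once the dictionary is set up, since all the hard analytic content is already packaged in the cited propositions about $Rot$ and about $RSK(w_0ww_0)$.
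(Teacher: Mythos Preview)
Your proposal is correct and follows essentially the same route as the paper: transport the problem to words via $RSK$, use that $Rot$ is a crystal anti-automorphism on words together with the identity $RSK(Rot(w))=(\operatorname{EVAC}(P(w)),\operatorname{EVAC}(Q(w)))$, and use that $RSK$ intertwines crystal operators to conclude $\operatorname{EVAC}\circ f_i = e_{n-i}\circ\operatorname{EVAC}$. Your version is in fact a bit more careful than the paper's (you work with a single $f_i$ rather than a sequence from the highest-weight word, and you explicitly address independence of the choice of $w$ and the weight-reversal), but the strategy is the same.
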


\begin{proof}
	First off, lets note that $\operatorname{EVAC}(Q)$ is a standard tableau which indexes an isomorphic connected component of crystals of tableaux. Then pairs $(P,\operatorname{EVAC}(Q))$ are the vertices of this isomorphic connected component, for all $P$ appearing in the first connected component.\\
	
	All tableaux $P$ of shape $\lambda$ appear in $B(\lambda)_n$. Since the involution $\operatorname{EVAC}$ sends a tableau of shape $\lambda$ onto another tableau of the same shape, then it is an auto(iso)morphism on $B(\lambda)_n$.\\
	
	If $w$ is Yamanouchi, it is sent onto $(1_\lambda,Q(w))$ through $RSK$, and $Rot(w)$ is sent onto $(\operatorname{EVAC}(1_\lambda), \operatorname{EVAC}(Q(w)))$. $\operatorname{EVAC}(1_\lambda)$ is then the sink of $B(\lambda)_n$, by the above remark. 
	If $v = f_{i_1}f_{i_2}\ldots f_{i_\ell}(w)$, for $w$ the Yamanouchi word which is the source of the connected component of the crystal of words in which lies $v$, then 
	\begin{align*}
	RSK(Rot(v)) &= RSK(Rot(f_{i_1}f_{i_2}\ldots f_{i_\ell}(w))) \\ &= RSK(e_{n-i_1}e_{n-i_2}\ldots e_{n-i_\ell}(Rot(w)) \\ &= (e_{n-i_1}e_{n-i_2}\ldots e_{n-i_\ell}(\operatorname{EVAC}(1_\lambda)), \operatorname{EVAC}(Q(w))),
	\end{align*}
	
	where the crystal operators are either those on words or on tableaux depending on the object they apply to.
	The fact that $RSK$ commutes with crystal operators, going from second equality to the third, follows from proposition~\ref{prop:crystalOfTableaux}.\\
	
	We then have that $\operatorname{EVAC}(f_i(P)) = e_{n-i}(\operatorname{EVAC}(P))$, and that $\operatorname{EVAC}$ is a crystal anti-automorphism.
\end{proof}

\begin{prop}
	$\operatorname{EVAC}$ inverses descent compositions of tableaux:\\ if $DesComp(T) = \alpha = (\alpha_1, \alpha_2, \ldots, \alpha_s)$, then
	$DesComp(\operatorname{EVAC}(T)) = \overset{\leftarrow}{\alpha} = (\alpha_s, \ldots, \alpha_2,  \alpha_1)$.
\end{prop}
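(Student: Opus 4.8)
The plan is to deduce this from the already-established fact that $Rot$ reverses descent compositions of words, using the identity $\operatorname{EVAC}(T)=P(Rot(rw(T)))$ and the classical description of a tableau's descent composition through the inverse of its standardized reading word. First I would record, for any semistandard tableau $U$ with $m$ cells and reading word $u=rw(U)$, that $DesComp(U)$ is precisely the composition of $m$ associated to the descent set $Des\big(std(u)^{-1}\big)$. Indeed, $DesComp(U)=DesComp(std(U))$ by the way standardization through the minimal parsing was set up (Example~\ref{ex:StdTab}, Proposition~\ref{prop:bijTabStdTabBandesMax}); standardization commutes with RSK insertion, so $std(U)=P(std(u))$; for a permutation $\pi$ the relation $RSK(\pi^{-1})=(Q(\pi),P(\pi))$ gives $P(\pi)=Q(\pi^{-1})$, whence by Proposition~\ref{prop:RSKPreservesDescents} the descent set of the standard tableau $P(std(u))$ equals $Des\big(std(u)^{-1}\big)$; and the descent set of a standard tableau determines its descent composition.

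Next I would use $\operatorname{EVAC}(T)=P\big(Rot(rw(T))\big)$, which is immediate from the proposition $RSK(Rot(w))=\big(\operatorname{EVAC}(P(w)),\operatorname{EVAC}(Q(w))\big)$ applied to $w=rw(T)$ together with $P(rw(T))=T$. Set $w=rw(T)$ and $\sigma=std(w)$. Since standardization commutes with insertion, $std(\operatorname{EVAC}(T))=P(std(Rot(w)))$, so by the first step $DesComp(\operatorname{EVAC}(T))$ is the composition associated to $Des\big(std(Rot(w))^{-1}\big)$. The description of $Rot$ on non-permutation words (standardize, apply $Rot$, destandardize on the reversed weight) yields $std(Rot(w))=Rot(\sigma)$, and $Rot(\pi)=w_0\pi w_0$ with $w_0^{-1}=w_0$ gives $Rot(\pi)^{-1}=Rot(\pi^{-1})$; hence $Des\big(std(Rot(w))^{-1}\big)=Des\big(Rot(\sigma^{-1})\big)$. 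Finally, applying the proposition that $Rot$ reverses descent compositions to the word $\sigma^{-1}$ gives $DesComp\big(Rot(\sigma^{-1})\big)=\overset{\leftarrow}{DesComp(\sigma^{-1})}$, and since by the first step $DesComp(T)$ is the composition associated to $Des(\sigma^{-1})$, we conclude $DesComp(\operatorname{EVAC}(T))=\overset{\leftarrow}{DesComp(T)}$.

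The main obstacle is bookkeeping rather than any deep idea: one must state and correctly apply the compatibility of standardization with RSK insertion and with $Rot$, and the "inverse descent set" description of $DesComp$, all of which are classical or follow from the propositions quoted above. The one point needing care is the left-to-right convention in standardization, so that the reversal $\alpha\mapsto\overset{\leftarrow}{\alpha}$ comes out with the correct orientation; a small instance such as $\operatorname{EVAC}\big(T_{(2,3,3,1)}\big)$, whose minimal parsing has type $(1,3,3,2)=\overset{\leftarrow}{(2,3,3,1)}$, confirms that the conventions line up.
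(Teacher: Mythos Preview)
Your proof is correct and follows essentially the same route as the paper's: both reduce the statement to the fact that $Rot$ reverses descent compositions of words, via the identity $RSK(Rot(w))=(\operatorname{EVAC}(P(w)),\operatorname{EVAC}(Q(w)))$ and the ``inverse descent'' characterization $DesComp(T)=DesComp(std(rw(T))^{-1})$. The only cosmetic difference is that you pass explicitly through standardization to work with permutations, whereas the paper invokes the biword inverse directly; the content is the same.
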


\begin{proof}
	We have seen in section~\ref{section:RSKPlacticCoplactic} that a word $w$ and its recording tableau $Q(w)$ share the same descent composition, and that $RSK(w^{-1})=(P(w^{-1}), Q(w^{-1}))=(Q(w),P(w))$. For a fixed tableau $T$, if $rw(T)=w$, then $w^{-1}$ and $P(w)=T$ share the same descent composition.\\ 
	
	We have also seen that $RSK(Rot(w)) = (\operatorname{EVAC}(P(w)),\operatorname{EVAC}(Q(w)))$, so 
	\begin{align*}
	RSK(Rot(w)^{-1}) &= (\operatorname{EVAC}(Q(w)),\operatorname{EVAC}(P(w)))\\
	&= RSK(Rot(w^{-1})).
	\end{align*}
	
	Then $Rot(w^{-1})$ and $\operatorname{EVAC}(P(w))=\operatorname{EVAC}(T)$ share the same descent composition.\\
	
	Finally, since $Rot$ inverses descent compositions on words, then if the descent composition of $w^{-1}$ and $P(w)=T$ is $\alpha$, then the descent composition of $Rot(w^{-1})$ and $\operatorname{EVAC}(T)$ is $\overset{\leftarrow}{\alpha}$.
\end{proof}

\begin{figure}[h]
	\begin{center}
		\begin{tabular}{ccc}
			\textcolor{blue}{$B_\mu(w)$} & & \textcolor{blue}{$B_\mu(w)^\#$} \vspace{2em} \\
			$w$ & \multirow{3}{3cm}{$\quad \nearrow\mathllap{\searrow} \quad Rot$ } & $w'$ \\
			$\left( \quad \newline \quad \newline \quad \right)$ & & $\left( \quad \newline \quad \newline \quad \right)$\\
			$Rot(w')$ & & $Rot(w)$
			\vspace{1em}\\
			& \textcolor{red}{$ \quad \downarrow RSK \quad \quad $} & 			\vspace{1.5em} \\
			$(1_\mu, \quad \quad \quad \quad \quad Q(w))$ & \multirow{3}{3cm}{$\quad \nearrow\mathllap{\searrow} \quad \operatorname{EVAC}$ } & $(1_\mu, \quad \quad \quad \quad \quad \operatorname{EVAC}(Q(w)))$ \\
			\newline
			$\left( \quad \newline \quad \newline \quad \right) $  & & $\left( \quad \newline \quad \newline \quad \right) $ \\
			\newline
			$(\operatorname{EVAC}(1_\mu), \quad Q(w))$ & & $(\operatorname{EVAC}(1_\mu), \quad  \operatorname{EVAC}(Q(w)))$\\
		\end{tabular}
	\end{center}
	\caption{Bijection between connected components isomorphic to $B(\mu)$ which are linked by the anti-automorphisms $Rot$ and $\operatorname{EVAC}$, and the isomorphism of crystals $RSK$, with $\operatorname{EVAC}(Q(w)) = Q(w')$ and $w'=RSK^{-1}(1_\mu,\operatorname{EVAC}(Q(w)))$.}
\end{figure}


\newpage
\bibliographystyle{apalike}
\bibliography{QSym.bib}

\end{document}